\documentclass[11pt,twoside]{amsart}
\usepackage{amssymb,amsbsy,amsmath,amsfonts,amscd}
\usepackage{graphicx,color}
\usepackage{float,url}
\usepackage[colorlinks,linkcolor=red,citecolor=blue]{hyperref}
\usepackage{booktabs}
\usepackage{bbm}
\usepackage{verbatim}
\usepackage{cancel} 
\usepackage{ulem}
\usepackage{mathrsfs} 
\usepackage{footnote}
\usepackage{subfigure}
\usepackage{enumerate}
\usepackage{enumitem}
\usepackage{nameref}
\usepackage{titletoc}

\allowdisplaybreaks[4]

\newcommand{\commentout}[1]{}

\newcommand {\vp} {\varphi}
\newcommand {\lb} {\lambda}

\newcommand {\dv}  { {\rm div} }
\newcommand {\cae} { {\mathcal E} }

\newcommand {\f}   {\frac}
\newcommand {\p}   {\partial}

\newcommand{\dis}{\displaystyle}
\newcommand{\beq}{\begin{equation}}
\newcommand{\eeq}{\end{equation}}
\newcommand{\bea} {\begin{array}{rl}}
\newcommand{\eea} {\end{array}}
\newcommand{\bepa}{\left\{ \begin{array}{l}}
\newcommand{\eepa} {\end{array}\right.}
\allowdisplaybreaks[4]
\newtheorem{theorem}{Theorem}
\newtheorem{lemma}[theorem]{Lemma}
\newtheorem{definition}[theorem]{Definition}
\newtheorem{remark}[theorem]{Remark}
\newtheorem{proposition}[theorem]{Proposition}

\newtheorem{notation}[theorem]{Notation}

\newcommand{\RNum}[1]{\uppercase\expandafter{\romannumeral #1\relax}}
\DeclareSymbolFont{bbold}{U}{bbold}{m}{n}
\DeclareSymbolFontAlphabet{\mathbbold}{bbold}
\title{Three asymptotic regimes for a Keller--Segel system\\
with volume-filling effect}
\author[M. Zhang]{Mingyue Zhang}
\thanks{Institute of Analysis and Scientific Computing, TU Wien, Wiedner Hauptstra\ss e 8--10, 1040 Wien, Austria}
\thanks{Email: mingyue.zhang@tuwien.ac.at} 

\begin{document}

\begin{abstract}
We study three asymptotic regimes for a parabolic--elliptic Keller--Segel system with porous-medium diffusion and the volume-filling sensitivity function $u(1-u)$. First, when $D=\epsilon^2\to0$, $\delta=1$, $1\le m<2$, and the chemotactic sensitivity coefficient is below an explicit $m$-dependent parabolicity threshold, we prove strong convergence to a scalar nonlinear diffusion equation. Second, as $\delta=\epsilon\to0$ with $D=1$, we obtain a hyperbolic--elliptic Keller--Segel limit by means of a kinetic reduction argument. Third, under the scaling $\chi=\epsilon^{-1}$, $D=\epsilon^2$, $\delta=1$, and for characteristic initial data of finite perimeter, we prove convergence on fixed time intervals to the time-independent initial patch in $BV(\Omega;\{0,1\})$. Because the sensitivity function is nonlinear in the density, weak convergence alone is insufficient to identify the chemotactic flux. Strong compactness of the density is therefore required in all three regimes and is obtained, respectively, through energy--entropy estimates, kinetic reduction, and $BV$ compactness.

\end{abstract}
\maketitle

\noindent{\makebox[1in]\hrulefill}\newline
2020 \textit{Mathematics Subject Classification.} 35B25; 35B40; 35K55; 92C17.
\newline\textit{Keywords and phrases.} Patlak--Keller--Segel system;  Asymptotic problems; Volume-filling effect; Porous medium diffusion
%
\section*{Introduction}
\label{sec:intro}

Chemotaxis is the directed movement of cells in response to chemical
signals. It occurs in many biological processes involving bacteria,
slime molds, skin pigmentation, and leukocytes, among other examples
\cite{murray2007mathematical}. The Patlak--Keller--Segel system,
introduced in \cite{patlak1953random,KellerSegel1970initiation}, is one
of the main mathematical models for chemotaxis. Depending on the
parameters, its solutions may exhibit aggregation, blow-up, pattern
formation, or stabilization
\cite{jager1992explosions,winkler2022family}.

In this paper, we study a Keller--Segel system in which nonlinear cell
diffusion competes with chemotactic aggregation. 
The chemotactic flux contains the volume-filling sensitivity function $u(1-u)$.
This function makes the chemotactic flux vanish at both $u=0$ and $u=1$.
The system is
\beq
\left\{
\begin{aligned}
&\p_t u-\delta\Delta\f{u^m}{m}
+\dv\big[\chi u(1-u)\nabla v\big]=0,
&&x\in\Omega,\quad t>0,
\\
&-D\Delta v+v=u,
&&x\in\Omega,\quad t>0,
\\
&\left(
\delta\nabla\f{u^m}{m}
-\chi u(1-u)\nabla v
\right)\cdot\vec n=0,
\qquad
\nabla v\cdot\vec n=0,
&&x\in\partial\Omega,\quad t>0,
\\
&u(0,x)=u^0(x),\qquad 0\le u^0\le1,
&&x\in\Omega.
\end{aligned}
\right.
\label{eq:KSDC}
\eeq
Here, $u(t,x)$ denotes the cell density and $v(t,x)$ denotes the
chemical concentration. The parameter $\delta>0$ measures the strength
of cell diffusion, $\chi>0$ is the chemotactic sensitivity coefficient, and $D>0$
is the chemical diffusion coefficient. The case $m=1$ corresponds to
linear diffusion, whereas $m>1$ corresponds to porous-medium diffusion.
The relative sizes of $\delta$, $\chi$, and $D$ determine the balance
between cell diffusion, chemotactic aggregation, and chemical
diffusion. Our aim is to understand how different balances between
these mechanisms lead to different effective dynamics.

The domain $\Omega\subset\mathbb R^n$ is bounded and has a smooth
boundary. We denote by $\vec n$ the unit outward normal vector and
assume that $|\Omega|=1$.

Integrating system \eqref{eq:KSDC} over $\Omega$, and using the Neumann
boundary conditions, we obtain
\beq
M:=\int_\Omega u(t,x)\,dx
=\int_\Omega v(t,x)\,dx
=\int_\Omega u^0(x)\,dx,
\qquad 0<M<1.
\label{es:Mass}
\eeq
We only consider solutions satisfying
\beq
0\le u(t,x),\;v(t,x)\le1.
\label{MaxPrinciple}
\eeq

Nonlinear Keller--Segel systems and volume-filling models have been
studied in many works; see, for example,
\cite{Maini2008,Hillen2009,Painter2018}. The surveys
\cite{Hillen2009,Painter2018} describe several biological applications
and mathematical approaches. For the system considered here, long-time
behavior and pattern formation for small mass were studied in
\cite{bpmz2024}. Global existence of weak solutions and an
incompressible limit under a general density constraint were obtained
in \cite{he2024in}. Several singular limits for related Keller--Segel
systems have also been considered in
\cite{he2024in,noemi2023,chen2020vanishing}.

The purpose of this paper is to study three singular regimes of
\eqref{eq:KSDC}. These regimes lead to three different limiting dynamics:
a nonlinear diffusion equation, a hyperbolic--elliptic system, and a
static sharp-interface limit. Although the three limits are different,
they share the same main difficulty. The nonlinear flux
\[
u_\epsilon(1-u_\epsilon)\nabla v_\epsilon
\]
cannot, in general, be identified from weak convergence alone.
Therefore, strong compactness of the density is needed in all three
regimes. We obtain this compactness by three different methods:
energy--entropy estimates, kinetic reduction, and $BV$ compactness.

\noindent$\bullet$ \textbf{Vanishing chemical diffusion and a nonlinear
diffusion limit.}
We first fix $\delta=1$ and consider
\[
D=\epsilon^2\to0,
\qquad
1\le m<2,
\qquad
0<\chi<
\frac{(3-m)^{3-m}}{(2-m)^{2-m}}.
\]
Formally, the elliptic equation gives $v_\epsilon\to u$, and the limit
equation is
\[
\partial_tu-\Delta\frac{u^m}{m}
+\dv\big[\chi u(1-u)\nabla u\big]=0.
\]
We prove that both $u_\epsilon$ and $v_\epsilon$ converge strongly to
the same limit $u$. The strong compactness follows from the energy and
entropy inequalities.

The explicit threshold $(3-m)^{3-m}/(2-m)^{2-m}$ also has a structural
meaning. The same threshold appears in the parabolicity condition and
in the uniqueness condition for the homogeneous steady state. Under the
strict inequality, the homogeneous steady state is also exponentially
stable.

For the classical parabolic--parabolic Keller--Segel system, a
vanishing chemical-diffusion limit was studied in
\cite{chen2020vanishing} by semigroup methods. A related singular limit
for a chemotaxis system with logarithmic sensitivity was considered in
\cite{wang2016asymptotic} by means of a Cole--Hopf transformation.

\noindent$\bullet$ \textbf{Vanishing cell diffusion and a
hyperbolic--elliptic limit.}
We next fix $D=1$ and consider
\[
\delta=\epsilon\to0.
\]
The formal limit is the nonlocal hyperbolic--elliptic system
\beq
\label{df}
\partial_tu+\chi\nabla\cdot\big(u(1-u)\nabla v\big)=0,
\qquad
-\Delta v+v=u,
\qquad
u(0,x)=u^0(x).
\eeq
In this regime, the cell diffusion disappears and the
equation changes from parabolic to hyperbolic. Discontinuous solutions
may occur, so an entropy condition is needed to select the appropriate
weak solution. For the general theory of scalar conservation laws, we
refer to \cite{dafermos2005hyperbolic,serre1999systems}.

In one space dimension, the vanishing cell-diffusion limit was studied
in \cite{dolak2005keller} by using a uniform $BV$ estimate. In
\cite{bp2009}, entropy solutions of a hyperbolic Keller--Segel model
were constructed by passing to the limit in a parabolic approximation
through a kinetic formulation.

In dimensions $n>1$, a uniform $BV$ estimate for the density is not
available, and direct $L^1$ compactness cannot be used; see
\cite{PERTHAME2014}. We instead pass to the limit in a kinetic
formulation. We prove that the limiting kinetic function is a
characteristic function. This kinetic reduction gives the strong
convergence of $u_\epsilon$ and allows us to identify the nonlinear
chemotactic flux.

\noindent$\bullet$ \textbf{Strong attraction, small chemical diffusion,
and a static sharp-interface limit.}
Finally, we consider the simultaneous scaling
\[
\chi=\epsilon^{-1}\to\infty,
\qquad
D=\epsilon^2\to0,
\qquad
\delta=1.
\]
The exponent $m\ge1$ is fixed. In this regime, the chemotactic attraction
becomes strong while the chemical diffusion becomes small. The singular
aggregation term and the loss of uniform elliptic regularity make the
limit problem delicate.

For characteristic initial data of finite perimeter, we obtain a
uniform bound on the singular energy. This bound gives $BV$ compactness
and forces the limiting density to take only the values $0$ and $1$.
The energy dissipation also implies that the limiting flux vanishes.
As a consequence, on every fixed time interval, both $u_\epsilon$ and
$v_\epsilon$ converge to the time-independent initial characteristic
function.

The simultaneous strong-attraction and small-chemical-diffusion limit
was studied in \cite{kim2024density}, where a connection with a
Hele--Shaw free-boundary problem with surface tension was established.
A related incompressible-limit problem was analyzed in
\cite{kim2023density}.
\vspace{2mm}

These three results show that different balances between cell diffusion,
chemotactic attraction, and chemical diffusion lead to different
effective behaviors within the same nonlinear Keller--Segel system.
From a modeling point of view, the three scalings can be interpreted as
different biological regimes. Analytically, all three limits require
strong compactness of the density, but this compactness is obtained by
a different mechanism in each regime.
\vspace{2mm}

The rest of the paper is organized as follows. We first introduce the
energy structure and the weak-solution framework. We then study the
three asymptotic regimes in the order described above. Numerical
simulations and concluding remarks are given at the end.

\begin{notation}
For any fixed $T>0$ and $\Omega\subset\mathbb{R}^n$, we denote\\
$\bullet$ $Q_T=\Omega\times(0,T)$,
$\Gamma_T=\partial\Omega\times(0,T)$.\\
$\bullet$ $C>0$ denotes a generic constant that may depend on
$T$, $\Omega$, $m$, the initial data, and any fixed parameters, but is
independent of the asymptotic parameter $\epsilon$.
\end{notation}

\section{Energy and entropy}
Classically, system \eqref{eq:KSDC} comes with an energy. We set
\beq \label{def: elementary}
\vp_\delta(u)=
\begin{cases}
\displaystyle \delta\int_0^u\frac{w^{m-2}}{1-w}\,dw,&m>1,\\[5pt]
\displaystyle \delta\ln\frac{u}{1-u},&m=1,
\end{cases}
\qquad
\Phi_\delta(u)=\int_0^u\vp_\delta(w)\,dw+a_\delta,
\eeq
where
\[
a_\delta:=-\min_{0\le s\le1}\int_0^s\vp_\delta(w)\,dw.
\]
When $\delta=1$, we write $\vp=\vp_1$ and $\Phi=\Phi_1$. In particular, when $m=1$,
\[
\Phi_\delta(u)=\delta\big[u\ln u+(1-u)\ln(1-u)\big]+\delta\ln2\ge0.
\]

One can check that smooth solutions satisfy the energy equality
\beq \bepa 
\cae (u)= \dis \int_\Omega \big[ \Phi_\delta(u)+ \f\chi{2} u(1-v)] dx,
\\[12pt]
\begin{aligned}
\dis \frac d {dt} \cae (u(t))&= - \int_\Omega u(1-u) |\nabla (\vp_\delta(u) - \chi v)|^2 dx\\
&= - \int_\Omega \frac{1}{u(1-u)} |\delta\nabla \frac {u^m}{m} - \chi u (1-u) \nabla v|^2 dx.    
\end{aligned}
\eepa
\label{def:energy}
\eeq
Indeed, using mass conservation and the self-adjointness of the elliptic operator in \eqref{eq:KSDC}$_2$, we have
\[
\frac{d}{dt}\left(\frac12\int_\Omega u(1-v)\,dx\right)
=-\int_\Omega v\,\partial_tu\,dx.
\]
Furthermore, we know that
\[
\partial_tu=\nabla\cdot\left[u(1-u)\nabla\big(\vp_\delta(u)-\chi v\big)\right],
\]
which yields \eqref{def:energy} after integration by parts.

Since $\Phi_\delta\ge0$, $0\le u,v\le1$, and the energy is nonincreasing, we have
\beq\label{energy_bound}
0\le\cae (u)\leq  \cae (u^0).
\eeq
From this, we conclude that 
\beq
4\int_0^\infty \int_\Omega  |\delta\nabla \frac {u^m}{m} - \chi u (1-u) \nabla v|^2 dx dt \leq  \cae (u^0).
\label{energybound}
\eeq

We can also check that smooth solutions satisfy the entropy equality: 
\beq \bepa
\displaystyle E(u):=\int_\Omega\left[u\ln\frac{u}{M}+(1-u)\ln\frac{1-u}{1-M}\right]dx,
\\[12pt]
\displaystyle \frac{dE(u(t))}{dt}
=\int_\Omega\left[-\vp_\delta'(u)|\nabla u|^2+\chi\nabla u\cdot\nabla v\right]dx.
\eepa
\label{def:entropy}
\eeq
For weak solutions, we use the entropy inequality
\beq\label{entropy_inequality}
E(u(t))-E(u(s))
\le
\int_s^t\!\int_\Omega
\left[-\vp_\delta'(u)|\nabla u|^2
+\chi\nabla u\cdot\nabla v\right]dx\,d\tau,
\qquad \text{for a.e. }0<s<t.
\eeq
The same inequality is assumed for $s=0$, with $E(u(s))$ replaced by
$E(u^0)$.
Since $0<M<1$, the entropy density is nonnegative and bounded on $[0,1]$; hence
\beq\label{entropy_bounded}
0\le E(u)\le E_{\max}.
\eeq


\begin{definition}[Weak solutions]
The pair $(u,v)$ defined on $[0,\infty)\times\Omega$ is a weak solution of system \eqref{eq:KSDC} if, for every $T>0$,
\[
0\le u,v\le1,\qquad u^m\in L^2(0,T;H^1(\Omega)),\qquad
v\in L^2(0,T;H^1(\Omega)),
\]
and, for every $\phi\in C^\infty([0,T]\times\overline\Omega)$ with $\phi(T,\cdot)=0$ and every $\psi\in H^1(\Omega)$,
\beq\label{def_weak1}
\begin{aligned}
&\int_0^T\!\int_\Omega
\left(\delta\nabla\frac{u^m}{m}\cdot\nabla\phi
-\chi u(1-u)\nabla v\cdot\nabla\phi-u\partial_t\phi\right)dxdt
=\int_\Omega u^0(x)\phi(0,x)dx,\\
&\int_\Omega\left(D\nabla v\cdot\nabla\psi+v\psi-u\psi\right)dx=0
\quad\text{for a.e. }t\in(0,T).
\end{aligned}
\eeq
An energy solution is a weak solution satisfying, for a.e.
$t\in(0,T)$,
\[
\cae(u(t))
+\int_0^t\!\int_\Omega
\frac{1}{u(1-u)}
\left|
\delta\nabla\frac{u^m}{m}
-\chi u(1-u)\nabla v
\right|^2 dx\,ds
\le \cae(u^0),
\]
where the fraction is defined to be zero on
$\{u=0\}\cup\{u=1\}$.
\label{weaksolution:D}
\end{definition} 

\section{Limits with \texorpdfstring{$D=\epsilon^2$}{D=epsilon squared}}\label{section_2}
This section first characterizes the stable regime for fixed $D=\epsilon^2$ and then proves convergence, as $D\to0$, to the effective nonlinear diffusion equation.

\subsection{Stable solution with \texorpdfstring{$\delta=1$}{delta=1}}
Thanks to energy dissipation~\eqref{def:energy}, we can characterize sufficiently regular steady states satisfying $0<u<1$ in $\Omega$. With $\vp(u)$ defined by \eqref{def: elementary}, we obtain that 
\beq
\nabla \f{{u}^m}{m} = \chi  u(1-u) \nabla v, \qquad \f{u^{m-2}}{1-u}\nabla u =\nabla \vp(u)= \chi \nabla v.
\label{constanteq}
\eeq
Therefore, there is a constant $\lb$ such that 
\beq
\vp(u) = \chi  (v + \lb).
\label{vp_rule}
\eeq
Then, such steady states are solutions of the following problem: find the pair $(v,\lb)$ such that
\beq \bepa
\begin{aligned}
&-\epsilon^2\Delta v +   v  =  \vp^{-1} (\chi (v+ \lb)),&x\in \Omega,
\\
&\nabla v\cdot \vec{n}=0,&x\in \partial \Omega,\\
&\displaystyle\int_\Omega \vp^{-1}(\chi(v+\lb))\,dx=M.&
\end{aligned}
\eepa
\label{eq:v} \eeq

\begin{theorem}[Uniqueness of solution]\label{th3}
For $1\le m<2$ and $\chi\le\frac{(3-m)^{3-m}}{(2-m)^{2-m}}$, system \eqref{eq:v} has a unique pair $(v,\lb)$, namely
\[
v\equiv M,\qquad \lb=\frac{\vp(M)}{\chi}-M.
\]
Consequently, $u\equiv M$ in \eqref{vp_rule}.
\end{theorem}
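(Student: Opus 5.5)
The plan is to exploit the fact that, on the range of $\vp$, the right-hand side $\vp^{-1}(\chi(v+\lb))$ is a \emph{non-expansive} function of $v$ exactly when $\chi$ is below the stated threshold. Combined with an $H^2$-type energy identity for the elliptic equation, this forces $\Delta v=0$.

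\textbf{Step 1: a Lipschitz bound for the nonlinearity.} Write $g:=\vp^{-1}$ and $u=g(\chi(v+\lb))$. From \eqref{def: elementary} with $\delta=1$ we have $\vp'(u)=u^{m-2}/(1-u)$ for $0<u<1$; for $m=1$ this reads $1/(u(1-u))$. Hence
\[
g'(s)=u^{2-m}(1-u),\qquad u=g(s),
\qquad\text{so}\qquad \nabla u=\chi\,u^{2-m}(1-u)\,\nabla v .
\]
An elementary maximization of $h(u)=u^{2-m}(1-u)$ on $[0,1]$ gives the maximizer $u_*=(2-m)/(3-m)$ and
\[
\max_{[0,1]} h=\frac{(2-m)^{2-m}}{(3-m)^{3-m}} .
\]
For $m=1$ this is $1/4$, consistent with the threshold $4$. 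Therefore the hypothesis on $\chi$ gives
\[
0\le \chi\,u^{2-m}(1-u)\le 1,\qquad\text{hence}\qquad \nabla u\cdot\nabla v\le |\nabla v|^2 \quad\text{pointwise.}
\]

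\textbf{Step 2: energy identity for $v$.} Next test the equation $-\epsilon^2\Delta v+v=u$ with $-\Delta v$ and integrate by parts using $\nabla v\cdot\vec n=0$:
\[
\epsilon^2\int_\Omega|\Delta v|^2\,dx+\int_\Omega|\nabla v|^2\,dx=\int_\Omega\nabla u\cdot\nabla v\,dx\le\int_\Omega|\nabla v|^2\,dx .
\]
This yields $\int_\Omega|\Delta v|^2\,dx=0$. So $v$ is harmonic with homogeneous Neumann data, and therefore $v\equiv c$ is constant; the argument also covers the equality case of $\chi$. Consequently $u=g(\chi(c+\lb))$ is constant. The mass constraint gives $u\equiv M$, the equation then gives $c=v=u=M$, and \eqref{vp_rule} gives $\lb=\vp(M)/\chi-M$. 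Conversely, this pair clearly solves \eqref{eq:v}, which establishes existence of the claimed solution as well as uniqueness.

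\textbf{Main obstacle: justification of Step 2.} This requires $v\in H^2(\Omega)$ and the chain rule for $u=g(\chi(v+\lb))$. Since $0\le u\le1$, elliptic regularity for the Neumann problem gives $v\in W^{2,p}$ for all $p<\infty$. The map $g$ is Lipschitz, with constant $\max h$, on the relevant range, including the endpoints where $u\in\{0,1\}$ and $g'$ extends continuously by $h$. Hence $u\in W^{1,p}$ with $\nabla u=\chi h(u)\nabla v$ a.e., which is all the computation uses. I would also note that for $m>1$ the range of $\vp$ is $[0,\infty)$; this only restricts admissible $\lb$ and does not affect the argument, since we work with a given solution.
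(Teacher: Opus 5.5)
Your proposal is correct and is essentially the paper's own proof. Both test the elliptic equation with $-\Delta v$, use $\nabla u=\chi u^{2-m}(1-u)\nabla v$ with $\sup_{[0,1]}u^{2-m}(1-u)=(2-m)^{2-m}/(3-m)^{3-m}$ to absorb the right-hand side, and conclude $v\equiv M$, $u\equiv M$, $\lambda=\varphi(M)/\chi-M$. Your extra points make explicit what the paper leaves implicit: extracting $\Delta v=0$ first so the borderline value of $\chi$ is handled through the Neumann condition, and the $W^{2,p}$ regularity and Lipschitz chain-rule justification.
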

\begin{proof}
Multiplying \eqref{eq:v} by $-\Delta v$ and integrating over $\Omega$, we obtain
\[\begin{aligned}
   \epsilon^2\int_{\Omega} |\Delta v |^2 dx +\int_{\Omega} |\nabla v|^2 dx&=\int_{\Omega}  \nabla \vp^{-1} (\chi (v+ \lb))\cdot \nabla v dx\\
   &=\int_{\Omega} {\chi u^{2-m}(1-u)}|\nabla v|^2 dx\\
   &\le \chi \frac{(2-m)^{2-m}}{(3-m)^{3-m}}\int_{\Omega}|\nabla v|^2 dx, 
\end{aligned}
\]
where we use 
\[
\sup_{0\le u\le1}\{u^{2-m}(1-u)\}=\frac{(2-m)^{2-m}}{(3-m)^{3-m}},\qquad 1\le m<2.
\]
Indeed, set $f(u):=u^{2-m}(1-u)$. A direct computation gives $f''(u)<0$ for $0<u<1$ and $f'(\frac{2-m}{3-m})=0$.
Then we have
\[
\epsilon^2\int_{\Omega} |\Delta v |^2 dx +[1-\chi \frac{(2-m)^{2-m}}{(3-m)^{3-m}}]\int_{\Omega} |\nabla v|^2 dx\le 0.
\]
Under the assumption on $\chi$ and the mass conservation, we get
\[
\Delta v=\nabla v=0\Longleftrightarrow v\equiv M.
\]
Consequently, we have
\[
 u=  \vp^{-1} (\chi (v+ \lb))=-\epsilon^2\Delta v +   v \equiv M.
\]
Substituting $u=v=M$ into \eqref{vp_rule} gives $\lb=\vp(M)/\chi-M$.
\end{proof}
\commentout{
\paragraph{Compactness of $v$.} We multiply the second equation of \eqref{eq:KSDC} by $-\Delta v$ and integrate it with Cauchy-
Schwarz inequality
\[
\epsilon^2\int_\Omega |\Delta v |^2 dx +\int_\Omega |\nabla v|^2 dx=-\int_\Omega  u\cdot \Delta v dx \leq\frac{1}{2\epsilon^2}\int_\Omega u^2 dx+\frac{\epsilon^2}{2}\int_\Omega |\Delta v|^2 dx,
\]
which leads to
\[
\frac{\epsilon^2}{2}\int_\Omega |\Delta v|^2 dx+\int_\Omega |\nabla v|^2 dx\leq \frac{1}{2\epsilon^2}\int_\Omega u^2 dx.
\]
Since $(u,v)$ is bounded from \eqref{MaxPrinciple}, it is clear that $\|u \|_{L^2(\Omega)}\leq 1$. Then we have
\beq\label{1}
\int_\Omega |\nabla v|^2 dx\leq \frac{1}{2\epsilon^2}.
\eeq
}

\commentout{
\paragraph{For $\epsilon$ small.}
Let $v=e^{-\frac{s}{\epsilon}}$ be an increasing solution, $s=s(x)\in \mathcal{C}^2(\Omega)$, we can rewrite \eqref{eq:KSDC} as
\[
\epsilon \Delta s-|\nabla s|^2+1=\vp^{-1}(v+\lb)e^{\frac{s}{\epsilon}}.
\]
When $\epsilon\to 0$, we have
\beq\bepa\label{2}
|\nabla s|^2=1,\qquad s>0,\quad v=-\lb,\\
|\nabla s|^2\leq 1,\qquad s=0,\quad v=1.
\eepa\eeq
Then by continuity of $s$, the solution $v(x)$ jumps from $-\lb$ to $1$ at the point $x_0$ such that $s(x_0)=0$.
In particular, we specify \eqref{2} in one dimension (see Figure~\ref{fig:label2})
\[\bepa
s_x=1,\qquad s>0,\quad v=-\lb,\\
s_x=0,\qquad s=0,\quad v=1.
\eepa\]

\begin{figure}[ht]
\centering
\includegraphics[width=6cm,height=4cm]{MQBH/Figure/s.png}
\\[-8pt]
\caption{Sketch of the relations between $v(x)$ and $s(x)$.}
\label{fig:label2}
\end{figure}
}

\subsection{\texorpdfstring{Limits in \eqref{eq:KSDC} in the stable case with $D=\epsilon^2$}{Limits in the stable case}} We first state the limit equation.

\noindent{\bf Limit equation.} 
As $\epsilon\to0$, the elliptic equation formally gives $v=u$. Hence the candidate limit equation is the following scalar equation:

\beq\label{19}
\partial_t u-\Delta \frac{u^m}{m}+\dv [\chi u(1-u)\nabla u]=0.
\eeq
We determine the conditions on $m$ and $\chi$ under which the limit equation~\eqref{19} is formally parabolic. We may rewrite equation \eqref{19} into the parabolic form
\[\bepa
\partial_t u-\Delta Q(u)=0,\\[8pt]
Q(u)=\dis\frac{u^m}{m}-\chi(\frac{u^2}{2}-\frac{u^3}{3}).
\eepa\]
Then we get the essential condition
\beq\label{parabolic}
Q'(u)\ge 0\Longleftrightarrow\chi\le \frac{u^{m-2}}{1-u}=\vp'(u),\qquad 0<u<1.
\eeq
Taking the infimum in \eqref{parabolic}, we obtain
\beq\label{limit condition}
\chi\le \inf_{0<u<1}\vp'(u)=\vp'(\frac{2-m}{3-m})=\frac{(3-m)^{3-m}}{(2-m)^{2-m}},\qquad 1\le m<2.
\eeq

To further study the stability of constant solutions, we mention the following inequalities.

\begin{lemma}\label{ineq}
Let $\lb_1>0$ be the first positive Neumann eigenvalue. If $u\in H^1(\Omega)$, $0\le u\le1$, and $\int_\Omega u\,dx=M$, then there exist constants $c,C>0$, depending only on $M$, such that
\beq\label{ineq2}
c\|u-M\|_{L^2(\Omega)}^2\le E(u)
\le C\|u-M\|_{L^2(\Omega)}^2
\le \frac{C}{\lb_1}\|\nabla u\|_{L^2(\Omega)}^2.
\eeq
\end{lemma}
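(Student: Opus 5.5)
The plan is to prove the two-sided bound on the entropy by a pointwise comparison of the entropy density with the quadratic $(u-M)^2$. The last inequality is then just the Poincaré–Wirtinger inequality.

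Set
\[
h(s):=s\ln\frac{s}{M}+(1-s)\ln\frac{1-s}{1-M},\qquad s\in[0,1],
\]
so that $E(u)=\int_\Omega h(u)\,dx$. A direct computation gives
\[
h(M)=0,\qquad h'(s)=\ln\frac{s}{M}-\ln\frac{1-s}{1-M},\qquad h'(M)=0,\qquad h''(s)=\frac{1}{s(1-s)}.
\]

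For the lower bound, note that $s(1-s)\le \frac14$, so $h''\ge 4$ on $(0,1)$. Taylor's formula with integral remainder around $s=M$ then gives $h(s)\ge 2(s-M)^2$ for $s\in(0,1)$. By continuity of $h$ on $[0,1]$, with the convention $0\ln0=0$, this extends to the endpoints. Integrating over $\Omega$ yields $E(u)\ge 2\|u-M\|_{L^2}^2$, so we may take $c=2$, which does not even depend on $M$.

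For the upper bound, $h''$ blows up at $0$ and $1$, so Taylor's formula cannot be used uniformly there. Instead consider the ratio
\[
g(s):=\frac{h(s)}{(s-M)^2},\qquad s\neq M .
\]
This function is continuous on $[0,1]\setminus\{M\}$. It extends continuously to $s=M$ with value $h''(M)/2=\frac{1}{2M(1-M)}$, since $h(M)=h'(M)=0$. At the endpoints it takes the finite values
\[
g(0)=\frac{\ln\frac{1}{1-M}}{M^2},\qquad g(1)=\frac{\ln\frac{1}{M}}{(1-M)^2},
\]
and these are finite because $0<M<1$. Hence $g$ is a continuous function on the compact set $[0,1]$, and $C:=\max_{[0,1]}g<\infty$ depends only on $M$. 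This yields $h(s)\le C(s-M)^2$ pointwise, and integrating gives the middle inequality. This compactness argument at the endpoints is the only step that needs care, since the blow-up of $h''$ there is what rules out a uniform Taylor bound.

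For the final inequality, $|\Omega|=1$ and $\int_\Omega u\,dx=M$, so $M$ is the mean of $u$. Therefore $u-M$ is orthogonal to the constants in $L^2$. The variational characterization of the first positive Neumann eigenvalue then gives
\[
\|u-M\|_{L^2}^2\le \lb_1^{-1}\|\nabla u\|_{L^2}^2 .
\]
Multiplying by $C$ completes the chain of inequalities in \eqref{ineq2}.
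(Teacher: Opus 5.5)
Your proof is correct and follows the paper's argument: it uses the same entropy density $h$, the same continuous ratio $g(s)=h(s)/(s-M)^2$ on the compact interval $[0,1]$ for the upper bound, and the Poincar\'e--Wirtinger inequality for the last step. The one difference is the lower bound: you use $h''(s)=\frac{1}{s(1-s)}\ge 4$ with Taylor's formula to get the explicit constant $c=2$, which does not depend on $M$, whereas the paper only uses that the continuous function $g$ is strictly positive on $[0,1]$; both arguments are valid.
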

\begin{proof}
Let
\[
h(s):=s\ln\frac{s}{M}+(1-s)\ln\frac{1-s}{1-M},
\qquad 0\le s\le1.
\]
Since $0<M<1$, we have
\[
h(M)=h'(M)=0,
\qquad
h''(s)=\frac{1}{s(1-s)}>0,\quad 0<s<1.
\]
Hence $h(s)>0$ for $s\neq M$. Moreover, by Taylor's expansion around
$s=M$,
\[
\lim_{s\to M}\frac{h(s)}{(s-M)^2}
=\frac{h''(M)}{2}
=\frac{1}{2M(1-M)}>0.
\]
Therefore, if we define
\[
g(s):=
\begin{cases}
\dfrac{h(s)}{(s-M)^2}, & s\neq M,\\[6pt]
\dfrac{1}{2M(1-M)}, & s=M,
\end{cases}
\]
then \(g\) is continuous and strictly positive on \([0,1]\). Thus, there
exist constants $c,C>0$, depending only on $M$, such that
\[
c(s-M)^2\le h(s)\le C(s-M)^2,
\qquad 0\le s\le1.
\]
Integrating over $\Omega$ gives
\[
c\|u-M\|_{L^2(\Omega)}^2
\le E(u)
\le C\|u-M\|_{L^2(\Omega)}^2.
\]

Finally, since $|\Omega|=1$ and $\int_\Omega u\,dx=M$, we have
$\int_\Omega(u-M)\,dx=0$. Hence, by the Poincar\'e--Wirtinger
inequality,
\[
\|u-M\|_{L^2(\Omega)}^2
\le\frac{1}{\lb_1}\|\nabla u\|_{L^2(\Omega)}^2.
\]

We complete the proof.
\end{proof}

\vspace{2mm}

Now we are ready to show the stability of the constant solution and the exponential decay.

\begin{proposition}[Stability of the constant solution]
Let $1\le m<2$ and
\beq\label{decay_rate}
B(m):=\frac{(3-m)^{3-m}}{(2-m)^{2-m}}-\chi>0.
\eeq
For every $1\le p,q<\infty$, an energy solution $(u,v)$ of system \eqref{eq:KSDC} with $D=\epsilon^2$ and $\delta=1$ satisfying \eqref{entropy_inequality} satisfies
\[
\|(u-M)(t)\|_{L^p(\Omega)}+\|(v-M)(t)\|_{L^q(\Omega)}\le Ce^{-ct},\qquad \text{for a.e. }t\ge0,
\]
where $C,c>0$ may depend on $M$, $\Omega$, $m$, $\chi$, $p$, and $q$, but are independent of $\epsilon$.
\end{proposition}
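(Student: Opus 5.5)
We will run a Gronwall argument on the relative entropy $E(u)$ from \eqref{def:entropy}, using the entropy inequality \eqref{entropy_inequality} with $\delta=1$. The aim is to show that the right-hand side of \eqref{entropy_inequality} is bounded above by $-c_0\int_\Omega|\nabla u|^2dx$, with $c_0>0$ independent of $\epsilon$. Lemma~\ref{ineq} then turns this into $\frac{d}{dt}E\le -cE$.

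\textbf{Handling the cross term.} Test the elliptic equation $-\epsilon^2\Delta v+v=u$ with $-\Delta v$ in the weak sense. This is justified because $v\in H^2$ by elliptic regularity at fixed $\epsilon$, and one approximates if needed. It gives
\[
\int_\Omega\nabla u\cdot\nabla v\,dx=\epsilon^2\int_\Omega|\Delta v|^2dx+\int_\Omega|\nabla v|^2dx .
\]
Testing instead with $-\Delta u$, or equivalently writing $u-v=-\epsilon^2\Delta v$, gives
\[
\int_\Omega|\nabla u|^2dx=\int_\Omega\nabla u\cdot\nabla v\,dx+\epsilon^2\int_\Omega\nabla u\cdot\nabla(-\Delta v)\,dx
\ge\int_\Omega\nabla u\cdot\nabla v\,dx .
\]
The inequality holds because, in Fourier/Neumann eigenmodes, $\hat v_k=\hat u_k/(1+\epsilon^2\lambda_k)$, so $\nabla v$ is a contraction of $\nabla u$. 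Indeed, in eigenmodes,
\[
\int_\Omega\nabla u\cdot\nabla v\,dx=\sum_k\frac{\lambda_k|\hat u_k|^2}{1+\epsilon^2\lambda_k}\le\|\nabla u\|_{L^2}^2 .
\]
This spectral representation is the cleanest route, and it is uniform in $\epsilon$.

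\textbf{Closing the entropy inequality.} From \eqref{limit condition} we have $\vp'(u)\ge \frac{(3-m)^{3-m}}{(2-m)^{2-m}}$ for $0<u<1$. Hence the integrand satisfies
\[
-\vp'(u)|\nabla u|^2+\chi\nabla u\cdot\nabla v\le -\Big(\tfrac{(3-m)^{3-m}}{(2-m)^{2-m}}\Big)|\nabla u|^2+\chi\nabla u\cdot\nabla v ,
\]
and after integration the right-hand side of \eqref{entropy_inequality} is at most $-B(m)\int_\Omega|\nabla u|^2dx$. This requires $\nabla u\in L^2$. That is not directly part of the weak-solution definition when $m>1$, but it follows from \eqref{entropy_inequality} itself, since $\vp'(u)\ge$ the threshold constant, so the dissipation term is finite. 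Combined with Lemma~\ref{ineq}, $\int_\Omega|\nabla u|^2dx\ge \frac{\lb_1}{C}E(u)$, we obtain
\[
E(u(t))-E(u(s))\le -\frac{B(m)\lb_1}{C}\int_s^tE(u(\tau))\,d\tau
\]
for a.e. $s<t$, including $s=0$. A Gronwall lemma for this integral form, valid for a.e. times because $E$ is bounded, gives $E(u(t))\le E(u^0)e^{-2ct}$.

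\textbf{From entropy decay to $L^p$ decay.} Lemma~\ref{ineq} gives $\|u-M\|_{L^2}\le Ce^{-ct}$. For $L^p$ with $p>2$, interpolate using $|u-M|\le1$: $\|u-M\|_{L^p}^p\le\|u-M\|_{L^2}^2$. For $p<2$, use Hölder with $|\Omega|=1$. For $v$, note $v-M$ solves $-\epsilon^2\Delta(v-M)+(v-M)=u-M$. The resolvent is an $L^q$ contraction by the maximum principle or duality, so $\|v-M\|_{L^q}\le\|u-M\|_{L^q}$, uniformly in $\epsilon$.

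\textbf{Main obstacle.} The main obstacle is making the cross-term estimate $\int\nabla u\cdot\nabla v\le\|\nabla u\|^2$ rigorous at the weak-solution level. The fallback is to use $\int\nabla u\cdot\nabla v=\int\nabla v\cdot\nabla(-\epsilon^2\Delta v+v)$ together with Cauchy–Schwarz, $\|\nabla v\|\le\|\nabla u\|$, which follows from testing the $v$ equation with $-\Delta v$.
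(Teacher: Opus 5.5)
Your proposal is correct and follows the paper's proof. The common steps are: Gr\"onwall on the relative entropy $E(u)$ using $\vp'\ge \frac{(3-m)^{3-m}}{(2-m)^{2-m}}$; the cross-term bound $\int_\Omega\nabla u\cdot\nabla v\,dx\le\|\nabla u\|_{L^2}^2$; Lemma~\ref{ineq}; and interpolation using $0\le u,v\le 1$. Your fallback for the cross term (testing with $-\Delta v$, then Cauchy--Schwarz) is exactly the paper's \eqref{restrain_nabla_v}, and your spectral identity is a mode-by-mode version of the same bound. The only real deviation is for $v$: you use $L^q$-contractivity of the Neumann resolvent, whereas the paper tests with $v-M$ to get an $L^2$ bound and then interpolates; both routes are uniform in $\epsilon$.
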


\begin{proof}
Since $1\le m<2$, we have
\[
\vp'(s)=\frac{s^{m-2}}{1-s}
\ge c_m:=\frac{(3-m)^{3-m}}{(2-m)^{2-m}}>0.
\]
For every fixed $\epsilon>0$, testing \eqref{eq:KSDC}$_2$ by $v$ gives, for a.e. $t>0$,
\[
\epsilon^2\|\nabla v(t)\|_{L^2(\Omega)}^2
+\|v(t)\|_{L^2(\Omega)}^2
=\int_\Omega u(t)v(t)\,dx\le1.
\]
In particular, $\nabla v\in L^2(Q_T)$ for every $T>0$.
Hence, by the entropy inequality and Young's inequality,
\[
\frac{c_m}{2}\int_0^T\!\int_\Omega |\nabla u|^2\,dxdt
\le E(u^0)
+\frac{\chi^2}{2c_m}
\int_0^T\!\int_\Omega|\nabla v|^2\,dxdt<\infty.
\]
Thus,
\[
u\in L^2(0,T;H^1(\Omega)).
\]
For every fixed $\epsilon>0$, elliptic regularity gives
$v(t)\in H^2(\Omega)$ for a.e. $t>0$.
Multiplying \eqref{eq:KSDC}$_2$ by $-\Delta v$ and integrating over $\Omega$, we obtain
\beq
\epsilon^2\|\Delta v\|_{L^2(\Omega)}^2+\|\nabla v\|_{L^2(\Omega)}^2
=\int_\Omega\nabla u\cdot\nabla v\,dx
\le\|\nabla u\|_{L^2(\Omega)}\|\nabla v\|_{L^2(\Omega)}.
\eeq
Using the Poincar\'e--Wirtinger inequality associated with the
Neumann Laplacian,
\beq\label{ineq1}
\int_\Omega |\Delta v|^2dx\ge \lb_1\int_\Omega|\nabla v|^2dx,
\eeq
where $\lb_1$ is the first positive Neumann eigenvalue, we find that
\beq\label{restrain_nabla_v}
\left\|\nabla u \right\|_{L^{2}(\Omega)}
\ge (\epsilon^2\lb_1+1)
\left\|\nabla v \right\|_{L^{2}(\Omega)}.
\eeq

Applying the above inequality to the integrand in \eqref{def:entropy}, for a.e. $\tau>0$, we have
\beq\label{entrpy_decay}
\begin{aligned}
&\int_\Omega\left[-\varphi'(u)|\nabla u|^2
+\chi\nabla u\cdot\nabla v\right]dx\\
&\le
-\int_\Omega \varphi'(u)|\nabla u|^2dx
+\frac{\chi}{\epsilon^2\lb_1+1}
\left\|\nabla u\right\|_{L^2(\Omega)}^2\\
&\le
-B(m)\left\|\nabla u\right\|_{L^2(\Omega)}^2
\le -cE(u),
\end{aligned}
\eeq
where the functions are evaluated at time $\tau$ and $c>0$ is independent of $\epsilon$ by \eqref{ineq2}.
Combining \eqref{entropy_inequality} with \eqref{entrpy_decay}, we obtain
\[
E(u(t))-E(u(s))\le -c\int_s^t E(u(\tau))\,d\tau.
\]
Using also the case $s=0$ and applying Gr\"onwall's inequality, we obtain
\beq\label{et}
E(u(t))\le E(u^0)e^{-ct}.
\eeq
Together with the lower bound in \eqref{ineq2}, this gives
\[
\|u(t)-M\|_{L^2(\Omega)}\le Ce^{-ct/2}.
\]
By virtue of \eqref{MaxPrinciple} and $L^p$ interpolation, for every
$1\le p<\infty$ there exist constants $C,c>0$, possibly depending on $p$, such that
\[
\|u(t)-M\|_{L^p(\Omega)}\le Ce^{-ct}.
\]

Furthermore, multiplying \eqref{eq:KSDC}$_2$ by $v-M$ and integrating over $\Omega$, we obtain
\[
\epsilon^2\|\nabla v\|_{L^2(\Omega)}^2
+\|v-M\|_{L^2(\Omega)}^2
\le
\|u-M\|_{L^2(\Omega)}
\|v-M\|_{L^2(\Omega)},
\]
which implies
\[
\|v-M\|_{L^2(\Omega)}
\le
\|u-M\|_{L^2(\Omega)}.
\]
Using again \eqref{MaxPrinciple} and $L^q$ interpolation, we obtain the exponential decay of
$\|v-M\|_{L^q(\Omega)}$ for every $1\le q<\infty$.
\end{proof}

\subsection{\texorpdfstring{Strong convergence as $\epsilon\to0$}{Strong convergence as epsilon tends to zero}}
For each $\epsilon>0$, let $(u_\epsilon,v_\epsilon)$ be an energy solution in the sense of Definition~\ref{weaksolution:D}. To derive the limit system as $D=\epsilon^2\to0$, we first prove strong convergence and then pass to the limit in \eqref{eq:KSDC}. We state the main result as follows.

\begin{theorem}\label{hks}
Let $1\le m<2$ and $\chi<\frac{(3-m)^{3-m}}{(2-m)^{2-m}}$. Let $(u_\epsilon,v_\epsilon)$ be energy solutions of system \eqref{eq:KSDC} with $D=\epsilon^2$, $\delta=1$, and $u_\epsilon^0=u^0$. We further assume that \eqref{entropy_inequality} holds. Then, up to a subsequence, there exists $u\in L^\infty(Q_T)\cap L^2(0,T;H^1(\Omega))$ such that, for every $1\le p<\infty$,
\[
u_\epsilon\to u,\qquad v_\epsilon\to u\qquad\text{strongly in }L^p(Q_T)\quad\text{as }\epsilon\to0.
\]
Moreover, $u$ is a weak solution of
\beq
\p_t u-\Delta\frac{u^m}{m}
+\dv[\chi u(1-u)\nabla u]=0
\qquad\text{in }Q_T,
\label{baru:equ}
\eeq
with initial datum $u^0$ and the no-flux boundary condition,
in the usual weak sense.
\end{theorem}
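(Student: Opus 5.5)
We will obtain $\epsilon$-uniform bounds from the entropy inequality \eqref{entropy_inequality}, as in the stability proposition, and then use an Aubin--Lions argument for $u_\epsilon$ and a direct elliptic comparison for $v_\epsilon-u_\epsilon$.

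\textbf{Step 1: uniform gradient bounds.} Since $\vp'(s)\ge c_m:=\frac{(3-m)^{3-m}}{(2-m)^{2-m}}$ on $(0,1)$, the integrand in \eqref{entropy_inequality} satisfies
\[
-\vp'(u_\epsilon)|\nabla u_\epsilon|^2+\chi\nabla u_\epsilon\cdot\nabla v_\epsilon
\le -c_m|\nabla u_\epsilon|^2+\chi\|\nabla u_\epsilon\|\,\|\nabla v_\epsilon\|
\]
after integration in $x$. Inequality \eqref{restrain_nabla_v} gives $\|\nabla v_\epsilon\|_{L^2}\le \|\nabla u_\epsilon\|_{L^2}/(1+\epsilon^2\lb_1)\le\|\nabla u_\epsilon\|_{L^2}$, and its derivation is $\epsilon$-independent. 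Hence
\[
E(u_\epsilon(T))+B(m)\int_0^T\!\!\int_\Omega|\nabla u_\epsilon|^2\le E(u^0).
\]
Here we use $B(m)>0$, which is exactly where the strict inequality on $\chi$ enters. Consequently $u_\epsilon$ is bounded in $L^2(0,T;H^1)$, and so is $v_\epsilon$.

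\textbf{Step 2: $v_\epsilon-u_\epsilon\to0$.} Write $u_\epsilon-v_\epsilon=-\epsilon^2\Delta v_\epsilon$. The identity $\epsilon^2\|\Delta v_\epsilon\|^2+\|\nabla v_\epsilon\|^2=\int\nabla u_\epsilon\cdot\nabla v_\epsilon$ yields $\epsilon^2\|\Delta v_\epsilon\|_{L^2}^2\le \frac14\|\nabla u_\epsilon\|_{L^2}^2$. Therefore
\[
\|u_\epsilon-v_\epsilon\|_{L^2(Q_T)}=\epsilon\,\|\epsilon\Delta v_\epsilon\|_{L^2(Q_T)}\le C\epsilon\to0.
\]

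\textbf{Step 3: time compactness.} From the weak formulation, $\p_tu_\epsilon=\Delta\frac{u_\epsilon^m}{m}-\chi\dv[u_\epsilon(1-u_\epsilon)\nabla v_\epsilon]$. Since $\nabla u_\epsilon^m=mu_\epsilon^{m-1}\nabla u_\epsilon$ with $u_\epsilon\le1$, both fluxes are bounded in $L^2(Q_T)$. Thus $\p_tu_\epsilon$ is bounded in $L^2(0,T;(H^1)')$. Aubin--Lions then gives, along a subsequence, $u_\epsilon\to u$ strongly in $L^2(Q_T)$ and a.e., with $u\in L^2(0,T;H^1)$ by weak lower semicontinuity. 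Step 2 gives $v_\epsilon\to u$ in $L^2$. The bound $0\le u_\epsilon,v_\epsilon\le1$ and interpolation upgrade both convergences to every $L^p$, $p<\infty$.

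\textbf{Step 4: passing to the limit.} In \eqref{def_weak1} the term $\nabla\frac{u_\epsilon^m}{m}=u_\epsilon^{m-1}\nabla u_\epsilon$ converges weakly in $L^2$ to $u^{m-1}\nabla u$. This holds because $u_\epsilon^{m-1}\to u^{m-1}$ strongly in every $L^p$, by a.e. convergence and boundedness (for $m=1$ the factor is $1$), while $\nabla u_\epsilon\rightharpoonup\nabla u$. For the chemotactic term, $u_\epsilon(1-u_\epsilon)\to u(1-u)$ strongly and boundedly, and $\nabla v_\epsilon\rightharpoonup\nabla u$ weakly in $L^2$. The weak limit of $\nabla v_\epsilon$ is $\nabla u$ because $v_\epsilon\to u$ strongly. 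A strong-times-weak product then converges, so the flux converges to $u(1-u)\nabla u$. The initial datum term is unchanged, which yields the weak formulation of \eqref{baru:equ}.

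The main obstacle is Step 1: we need $\epsilon$-uniform control of $\nabla u_\epsilon$ despite the cross term. This control relies on the sharp comparison \eqref{restrain_nabla_v} together with the strict threshold. Everything after that is standard compactness.
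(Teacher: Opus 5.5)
Your proof is correct and follows the paper's route. The entropy inequality, combined with \eqref{restrain_nabla_v} and $B(m)>0$, gives the uniform $L^2(0,T;H^1)$ bound; Aubin--Lions then gives strong convergence of $u_\epsilon$, and strong-times-weak products identify the limit. The deviations are minor: you bound $u_\epsilon-v_\epsilon=-\epsilon^2\Delta v_\epsilon$ directly by $\frac{\epsilon}{2}\|\nabla u_\epsilon\|_{L^2}$, which gives an explicit $O(\epsilon)$ rate, whereas the paper tests the elliptic equation with $v_\epsilon-u$ after obtaining $u_\epsilon\to u$; and you bound $\partial_t u_\epsilon$ through the gradient bounds rather than the energy dissipation \eqref{energybound}.
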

\begin{proof}
Because of the boundedness of solutions in~\eqref{MaxPrinciple}, there are subsequences, still denoted by $\left\{u_\epsilon\right\}$ and $\left\{v_\epsilon\right\}$, and functions $u,v\in L^\infty(Q_T)$ such that
\beq \bepa
u_\epsilon\rightharpoonup^{*} u
\quad \text{in }L^\infty(Q_T),
\\
v_\epsilon\rightharpoonup^{*} v
\quad \text{in }L^\infty(Q_T).
\eepa
\label{weaklystar}
\eeq

We now verify that the solution $(u_\epsilon,v_\epsilon)$ converges to $(u,v)$ strongly in $L^2\big(Q_T \big)$ as $\epsilon \rightarrow 0$. 
On the one hand, it follows from the energy dissipation~\eqref{energybound} that 
\beq
\|\p_t u_\epsilon\|_{L^2(0,T;H^{-1}(\Omega))}\le\left\|\nabla \frac {(u_\epsilon)^m}{m} - \chi u_\epsilon (1-u_\epsilon) \nabla v_\epsilon \right\|_{L^2(Q_T )}\le C.
\label{partial_t:u}
\eeq
On the other hand, for $1\le m<2$, it follows from the entropy inequality and \eqref{entrpy_decay} that the entropy functional defined in~\eqref{def:entropy} satisfies
\beq\label{19_11}\begin{aligned}
E(u_\epsilon(T))-E(u_\epsilon^0)
&\le\iint_{Q_T}\left[-\vp'(u_\epsilon)|\nabla u_\epsilon|^2
+\chi\nabla u_\epsilon\cdot\nabla v_\epsilon\right]dxdt\\
&\le -C\|\nabla u_\epsilon\|_{L^2(Q_T)}^2,
\end{aligned}
\eeq
where $C>0$ is independent of $\epsilon$. Since $E(u_\epsilon(T))\ge0$, we obtain
\beq\label{3}
\|\nabla u_\epsilon\|_{L^2(Q_T)}^2
\le \frac{E(u_\epsilon^0)-E(u_\epsilon(T))}{C}
\le \frac{E(u_\epsilon^0)}{C}.
\eeq

Therefore, from \eqref{partial_t:u} and \eqref{3}, the Aubin--Lions lemma first gives, up to a subsequence,
\[
u_\epsilon\to u\qquad\text{strongly in }L^2(Q_T).
\]
Since $0\le u_\epsilon,u\le1$, this convergence also holds for every $1\le p<\infty$, namely,
\beq
u_\epsilon\rightarrow u\qquad\text{in }L^{p}\big(Q_T\big)\quad \text{as }\epsilon\rightarrow0.
\label{strongcon}
\eeq

By \eqref{restrain_nabla_v} and \eqref{3}, we get the boundedness of $\nabla v_\epsilon$, that is,
\[
\|\nabla v_\epsilon\|_{L^2(Q_T)}
\le C\|\nabla u_\epsilon\|_{L^2(Q_T)}
\le C.
\]
Moreover, \eqref{3} yields, up to the same subsequence,
\[
u_\epsilon\rightharpoonup u
\qquad\text{weakly in }L^2(0,T;H^1(\Omega)).
\]
In particular, $u\in L^2(0,T;H^1(\Omega))$, so that $v_\epsilon-u$ is an admissible test function in the elliptic equation.
Multiplying \eqref{eq:KSDC}$_2$ by $(v_\epsilon-u)$ and integrating in $Q_T$, we obtain
\[
\begin{aligned}
&\epsilon^2\|\nabla v_\epsilon\|_{L^2(Q_T)}^2
-\epsilon^2\iint_{Q_T}\nabla v_\epsilon\cdot\nabla u\,dxdt
+\|v_\epsilon-u\|_{L^2(Q_T)}^2\\
&\qquad
=\iint_{Q_T}(u_\epsilon-u)(v_\epsilon-u)\,dxdt.
\end{aligned}
\]
By Young's inequality,
\[
\begin{aligned}
\frac{\epsilon^2}{2}\|\nabla v_\epsilon\|_{L^2(Q_T)}^2
+\frac12\|v_\epsilon-u\|_{L^2(Q_T)}^2
\le
\frac{\epsilon^2}{2}\|\nabla u\|_{L^2(Q_T)}^2
+\frac12\|u_\epsilon-u\|_{L^2(Q_T)}^2.
\end{aligned}
\]
Therefore, by \eqref{strongcon}, we first obtain $v_\epsilon\to u$ strongly in $L^2(Q_T)$. Since $0\le v_\epsilon,u\le1$, the convergence also holds for every $1\le p<\infty$:
\beq
v_\epsilon\to u
\qquad\text{in }L^p(Q_T),\quad 1\le p<\infty,
\quad\text{as }\epsilon\to0.
\label{strongconvergence1}
\eeq
Moreover, the uniform gradient bounds and the strong convergence imply, up to the same subsequence,
\[
\nabla u_\epsilon^m\rightharpoonup\nabla u^m,
\qquad
\nabla v_\epsilon\rightharpoonup\nabla u
\quad\text{in }L^2(Q_T).
\]
Thus, by \eqref{strongcon} and \eqref{strongconvergence1}, we can pass to the limit in the full weak formulation \eqref{def_weak1}, including its initial term, and conclude that $u$ is a weak solution satisfying \eqref{baru:equ}.
\end{proof}

\section{\texorpdfstring{Strong convergence when $\delta=\epsilon$}{Strong convergence when delta equals epsilon}}\label{section3}
We fix $D=1$ and set $\delta=\epsilon$. As $\epsilon\to0$, we prove that $(u_\epsilon,v_\epsilon)$ converges strongly to a kinetic entropy solution of the hyperbolic--elliptic system \eqref{hss2}. 
Throughout this section, we consider energy solutions obtained as limits of the
approximation problem \eqref{eq:appro} as $\nu\to0$.
\begin{theorem}\label{Theorem_7}
Let $m\ge1$ and $\chi>0$. For $0<\epsilon\le1$, let $(u_\epsilon,v_\epsilon)$ be energy solutions of system \eqref{eq:KSDC} with $D=1$, $\delta=\epsilon$, and $u_\epsilon^0=u^0$. Then, for every $1<q<\infty$,
\beq\label{T7_1}
\sqrt{\epsilon}\,\|\nabla u_\epsilon^m\|_{L^2(Q_T)}
+\|v_\epsilon\|_{L^\infty(0,T;W^{2,q}(\Omega))}
+\|\partial_tv_\epsilon\|_{L^2(0,T;H^1(\Omega))}\le C_q.
\eeq
Up to a subsequence, there exist $u\in L^\infty(Q_T)$ and $v\in L^\infty(0,T;W^{2,q}(\Omega))$ for every $1<q<\infty$, such that, for every $1\le p<\infty$,
\begin{equation}\label{stronglimit}
u_\epsilon\to u\qquad\text{strongly in }L^p(Q_T)\quad\text{as }\epsilon\to0,
\end{equation}
\begin{equation}\label{ss1}
v_\epsilon\to v\qquad\text{strongly in }L^p(0,T;W^{1,p}(\Omega))\quad\text{as }\epsilon\to0.
\end{equation}
The limit satisfies
\begin{equation}\label{hss2}
\begin{cases}
\partial_tu+\chi\nabla\cdot\big(u(1-u)\nabla v\big)=0,&\text{in }\mathcal D'(Q_T),\\
-\Delta v+v=u,&\text{a.e. in }Q_T,\\
\nabla v\cdot\vec n=0,&\text{on }\Gamma_T,\\
u(0,\cdot)=u^0,&\text{in the kinetic trace sense}.
\end{cases}
\end{equation}
The pair $(u,v)$ is understood as a kinetic entropy solution in the sense of the kinetic formulation below.
\end{theorem}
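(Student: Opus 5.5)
We argue in three stages: uniform estimates \eqref{T7_1}, strong compactness of $u_\epsilon$ through a kinetic formulation and a reduction argument, and passage to the limit.

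\emph{Uniform bounds.} Since $D=1$ and $0\le u_\epsilon\le1$, elliptic $L^q$ regularity for $-\Delta v_\epsilon+v_\epsilon=u_\epsilon$ with Neumann data gives $\|v_\epsilon(t)\|_{W^{2,q}}\le C_q\|u_\epsilon(t)\|_{L^q}\le C_q$ uniformly in $t$ and $\epsilon$. In particular, $\nabla v_\epsilon$ is bounded in $L^\infty(Q_T)$ by Sobolev embedding with $q>n$.

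For the gradient bound, we test the $u$-equation with $u_\epsilon^m$ (or with $u_\epsilon$, using the identity for $\nabla u^m$). This gives
\[
\frac{d}{dt}\int_\Omega \frac{u_\epsilon^{m+1}}{m(m+1)}\,dx+\frac{\epsilon}{m^2}\int_\Omega|\nabla u_\epsilon^m|^2\,dx=\frac{\chi}{m}\int_\Omega u_\epsilon(1-u_\epsilon)\nabla v_\epsilon\cdot\nabla u_\epsilon^m\,dx .
\]
The right-hand side is not small relative to $\epsilon$, so Young's inequality alone is insufficient. Instead we write it as $\chi\int\nabla G(u_\epsilon)\cdot\nabla v_\epsilon\,dx$ with $G'(s)=s(1-s)ms^{m-1}/m$, integrate by parts to get $-\chi\int G(u_\epsilon)\Delta v_\epsilon\,dx$, and use the Neumann condition. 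This term is bounded by $C\|\Delta v_\epsilon\|_{L^\infty(0,T;L^q)}$, which yields $\epsilon\|\nabla u_\epsilon^m\|_{L^2(Q_T)}^2\le C$.

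For $\partial_t v_\epsilon$, we differentiate the elliptic equation in time and test with $\partial_t v_\epsilon$:
\[
\|\partial_t v_\epsilon\|_{H^1}^2=\langle\partial_t u_\epsilon,\partial_t v_\epsilon\rangle=-\int\big(\epsilon\nabla\tfrac{u_\epsilon^m}{m}-\chi u_\epsilon(1-u_\epsilon)\nabla v_\epsilon\big)\cdot\nabla\partial_t v_\epsilon\,dx .
\]
The flux is bounded in $L^2(Q_T)$ by the previous two bounds, which gives the $L^2(0,T;H^1)$ estimate. Aubin--Lions then gives \eqref{ss1}, since $v_\epsilon$ is bounded in $L^\infty(W^{2,q})$ with $\partial_tv_\epsilon$ bounded in $L^2(H^1)$, and $W^{2,q}\subset\subset W^{1,p}$.

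\emph{Kinetic formulation.} Set $\chi_\epsilon(\xi,t,x)=\mathbbm 1_{0<\xi<u_\epsilon(t,x)}$. Writing $A(\xi)=\chi\xi(1-\xi)$, a chain-rule computation for convex entropies, justified at the level of the approximation \eqref{eq:appro} and passed to the limit in $\nu$, gives
\[
\partial_t\chi_\epsilon+A'(\xi)\nabla v_\epsilon\cdot\nabla_x\chi_\epsilon-\epsilon\,\dv\big(u_\epsilon^{m-1}\nabla\chi_\epsilon\big)+\chi(1-2\xi)\,\ldots=\partial_\xi m_\epsilon+\text{source terms},
\]
where the source terms involve $\Delta v_\epsilon\,A(\xi)\,\delta(\xi=u_\epsilon)$ and $m_\epsilon\ge0$ is a measure. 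More precisely, the equation reads
\[
\partial_t\chi_\epsilon+\dv_x\big(A'(\xi)\chi_\epsilon\nabla v_\epsilon\big)-A'(\xi)\Delta v_\epsilon\chi_\epsilon+\ldots
\]
and we arrange it in the form used in \cite{bp2009}. The entropy dissipation measure $m_\epsilon\ge\epsilon u_\epsilon^{m-1}|\nabla u_\epsilon|^2\delta(\xi-u_\epsilon)$ has uniformly bounded mass on compact sets; this follows from the $\xi$-moments and the bounds above.

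Up to subsequences, $\chi_\epsilon\rightharpoonup^* f$ with $0\le f\le1$ and $\partial_\xi f\le0$ in a distributional sense; moreover $m_\epsilon\rightharpoonup m$, $\nabla v_\epsilon\to\nabla v$ strongly and $\Delta v_\epsilon\rightharpoonup\Delta v$. The $\epsilon$-diffusion terms vanish, since $\epsilon\,\nabla u_\epsilon^m$ is $O(\sqrt\epsilon)$ in $L^2$. The limit $f$ therefore satisfies a linear kinetic equation with transport field $A'(\xi)\nabla v$ and a source $\partial_\xi m$.

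\emph{Main obstacle: kinetic reduction $f=\mathbbm 1_{\xi<u}$.} Following Perthame's approach, we test the limit equation for $f$ and derive an equation for $f(1-f)$ by regularization in $(t,x)$. Since the velocity field $A'(\xi)\nabla v$ is Lipschitz in $x$ (because $v\in W^{2,q}$ with $q>n$, so commutators are controlled), the DiPerna--Lions commutator estimate applies. The defect measure sign argument then gives
\[
\frac{d}{dt}\iint f(1-f)\,d\xi\,dx\le C\iint f(1-f)\,d\xi\,dx .
\]
At $t=0$ we have $f(1-f)=0$, because $f(0)=\mathbbm 1_{\xi<u^0}$ in the kinetic trace sense; the trace exists by the time-continuity of weak kinetic solutions. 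Gronwall then yields $f(1-f)\equiv0$, so $f=\mathbbm 1_{\xi<u}$ with $u=\int f\,d\xi$. This identity upgrades weak to strong $L^1$ convergence of $u_\epsilon$, because $\|\chi_\epsilon\|_{L^2}^2=\int\chi_\epsilon$ converges to $\|f\|_{L^2}^2$; this gives \eqref{stronglimit} for all $p<\infty$ by boundedness.

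The delicate points are handling the nonconservative term $A'(\xi)\Delta v\,f$ with $\Delta v$ only in $L^q$ (not $L^\infty$) and justifying the trace. I expect this commutator/trace step to be the core difficulty. Finally, the strong convergences identify $u_\epsilon(1-u_\epsilon)\nabla v_\epsilon\to u(1-u)\nabla v$, which gives \eqref{hss2}, with the elliptic equation holding a.e.
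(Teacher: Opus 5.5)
There is a genuine gap in your passage $\epsilon\to0$ in the kinetic equation. Your uniform bounds and overall architecture agree with the paper: the kinetic formulation via the $\nu$-approximation, reduction in the spirit of \cite{bp2009}, then strong convergence from $f=\mathbbm 1_{\xi<u}$.

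Write $f_\epsilon$ for your $\chi_\epsilon$, to avoid the clash with the coefficient $\chi$. Whatever form you choose, the equation contains the product $\Delta v_\epsilon f_\epsilon=(v_\epsilon-u_\epsilon)f_\epsilon$. It enters through the nonconservative term $A'(\xi)\Delta v_\epsilon f_\epsilon$ and through the $\xi$-transport term $A(\xi)\Delta v_\epsilon\partial_\xi f_\epsilon$.

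Your estimates only give $\Delta v_\epsilon\rightharpoonup\Delta v$ weakly. Strong convergence of $\Delta v_\epsilon$ is equivalent to that of $u_\epsilon$, which is what you are trying to prove. A weak-times-weak product does not converge to $\Delta v\,f$; one only has $u_\epsilon f_\epsilon\rightharpoonup^*\rho$, and in general $\rho\ne uf$. So the limit $f$ does not satisfy the closed linear kinetic equation you state. Consequently $\frac{d}{dt}\iint f(1-f)\le C\iint f(1-f)$ does not follow, because an extra term of uncontrolled size appears.

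The difficulty you flag is not the real one. $\Delta v_\epsilon=v_\epsilon-u_\epsilon$ is bounded by $1$ in $L^\infty$, so integrability of $\Delta v$ is harmless. Also, $\nabla v\in W^{1,q}$ is not Lipschitz, although it suffices for the DiPerna--Lions commutator.

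The missing idea is how the paper closes the limit equation. First, use $u_\epsilon\partial_\xi f_\epsilon=\xi\partial_\xi f_\epsilon$ to rewrite the $\xi$-transport as $\chi(\xi-v_\epsilon)g(\xi)\partial_\xi f_\epsilon$, whose velocity converges strongly. Then write $\chi g'(\xi)\nabla v_\epsilon\cdot\nabla_x f_\epsilon=\chi\dv\big(g'(\xi)\nabla v_\epsilon f_\epsilon\big)+\chi g'(\xi)(u_\epsilon-v_\epsilon)f_\epsilon$. Now the only bad product is $u_\epsilon f_\epsilon$, and it carries no $\xi$-derivative. The limit equation then contains the defect $R=\chi g'(\xi)(\rho-uf)$.

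Second, pass to the limit in $u_\epsilon f_\epsilon=\xi f_\epsilon+\int_\xi^\infty f_\epsilon\,d\eta$ for $\xi>0$. This gives
\[
\rho-uf=f\int_0^\xi\big(1-f(\eta)\big)\,d\eta+(1-f)\int_\xi^\infty f(\eta)\,d\eta .
\]
The monotonicity $\partial_\xi f\le0$ then yields $0\le\rho-uf\le Cf(1-f)$, hence $|R|\le Cf(1-f)$; this is Lemma 4.1 of \cite{bp2009}. Only with this structural bound can $R$ be absorbed into the right-hand side of the Gronwall inequality for $\iint f(1-f)$. It is exactly hypothesis (ii) of the reduction theorem the paper invokes.
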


Before proving Theorem~\ref{Theorem_7}, we first give some elementary estimates.
\begin{lemma}[Elementary estimates]\label{cms}
For $m\ge1$ and $\chi>0$, the energy solution $(u_\epsilon,v_\epsilon)$ of system \eqref{eq:KSDC} with $0<\epsilon\le1$ satisfies 
\begin{equation}\label{e2}
\sqrt{\epsilon}\|\nabla u_\epsilon^m\|_{L^2(Q_T)}\leq C,
\end{equation}
\begin{equation}\label{e3}
\|v_\epsilon\|_{L^\infty(0,T;W^{2,q}(\Omega))}\leq C,\qquad1<q<\infty,
\end{equation}
\begin{equation}\label{e4}
\|\partial_t v_\epsilon\|_{L^2(0,T;H^{1}(\Omega))}\leq C.
\end{equation}

Moreover, for any $T>0$, there exist $u\in L^\infty(Q_T)$ and $v\in L^\infty(0,T;W^{2,q}(\Omega))$ for every $1<q<\infty$ such that, up to a subsequence, as $\epsilon\to0$,
\begin{equation}\label{e5}
u_\epsilon\rightharpoonup^{*}u,\qquad
v_\epsilon\rightharpoonup^{*}v
\quad\text{in }L^\infty(Q_T),
\end{equation}
\begin{equation}\label{e6}
v_\epsilon\to v,\quad \text{in }L^p(0,T;W^{1,p}(\Omega))\quad p\in[1,\infty),
\end{equation}
\begin{equation}\label{e7}
\epsilon \nabla u_\epsilon^m\to 0,\quad \text{in }L^2(Q_T).
\end{equation}
\end{lemma}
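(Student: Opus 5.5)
We obtain each bound from the energy inequality of Definition~\ref{weaksolution:D} and elliptic regularity for $-\Delta v_\epsilon+v_\epsilon=u_\epsilon$ with $D=1$. We then take limits by standard compactness.

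\emph{Estimate \eqref{e2}.} By \eqref{energybound} with $\delta=\epsilon$ and $0\le u_\epsilon\le1$, the flux $J_\epsilon:=\epsilon\nabla\frac{u_\epsilon^m}{m}-\chi u_\epsilon(1-u_\epsilon)\nabla v_\epsilon$ satisfies $\|J_\epsilon\|_{L^2(Q_T)}^2\le C\,\cae(u^0)$. The energy uses $\Phi_\epsilon$, and $\Phi_\epsilon=\epsilon\Phi_1$ because $a_\delta$ scales linearly in $\delta$. Hence $\cae(u^0)\le \Phi_1$-part $+\chi/2$, which is bounded uniformly for $\epsilon\le1$. The weighted form gives more. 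Since $\frac1{u(1-u)}\ge4$,
\[
\iint_{Q_T}\frac{1}{u_\epsilon(1-u_\epsilon)}\Big|\epsilon\nabla\tfrac{u_\epsilon^m}{m}-\chi u_\epsilon(1-u_\epsilon)\nabla v_\epsilon\Big|^2\le C .
\]
We expand the square. The cross term equals $-2\chi\epsilon\iint u_\epsilon^{m-1}\nabla u_\epsilon\cdot\nabla v_\epsilon = -2\chi\epsilon\iint \nabla\frac{u_\epsilon^m}{m}\cdot\nabla v_\epsilon$. Integrating by parts with the Neumann condition, it becomes $2\chi\epsilon\iint \frac{u_\epsilon^m}{m}\Delta v_\epsilon$, which is bounded by $C\epsilon\|\Delta v_\epsilon\|_{L^1}\le C$. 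The last term $\chi^2\iint u_\epsilon(1-u_\epsilon)|\nabla v_\epsilon|^2$ is nonnegative. Hence $\epsilon^2\iint \frac{|\nabla u_\epsilon^m|^2}{m^2u_\epsilon(1-u_\epsilon)}\le C$, and in particular $\epsilon^2\|\nabla u_\epsilon^m\|_{L^2}^2\le C$. This is only $\epsilon\|\nabla u^m_\epsilon\|\le C$, so a better route is needed for the $\sqrt\epsilon$ scaling. I would obtain it from the entropy-type identity of the approximation scheme: test the $u$-equation by $u_\epsilon^{m}$ (or by $u_\epsilon$), which gives
\[
\frac{1}{m+1}\frac{d}{dt}\int u_\epsilon^{m+1}+\epsilon\int u_\epsilon^{m-1}\nabla u_\epsilon\cdot\nabla u_\epsilon^m=\chi\int u_\epsilon(1-u_\epsilon)\nabla v_\epsilon\cdot\nabla u_\epsilon^m .
\]
I would move the right-hand side to $-\chi\int \nabla\cdot(G(u_\epsilon))\cdot\nabla v_\epsilon$-type terms with $G'(s)=ms^{m}(1-s)$. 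After integrating by parts this becomes $\chi\int G(u_\epsilon)\Delta v_\epsilon$, which is bounded. This gives $\epsilon\iint |\nabla u_\epsilon^{(2m)/2}|^2$-type control, and testing instead by $u_\epsilon^{2m-1}$ gives exactly $\epsilon\|\nabla u_\epsilon^m\|_{L^2}^2\le C$. The computation must be justified, which I would do at the level of the $\nu$-approximation \eqref{eq:appro} before passing $\nu\to0$ by weak lower semicontinuity. This justification is the main technical obstacle.

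\emph{Estimates \eqref{e3}, \eqref{e4}.} Since $0\le u_\epsilon\le1$, $L^q$ elliptic regularity for the Neumann problem gives $\|v_\epsilon(t)\|_{W^{2,q}}\le C_q\|u_\epsilon(t)\|_{L^q}\le C_q$, uniformly in $t$ and $\epsilon$. For the time derivative, $w=\partial_tv_\epsilon$ solves $-\Delta w+w=\partial_tu_\epsilon=-\nabla\cdot J_\epsilon$ weakly with the natural Neumann condition. Testing by $w$ gives $\|w\|_{H^1}^2\le\int J_\epsilon\cdot\nabla w\le \|J_\epsilon\|_{L^2}\|\nabla w\|_{L^2}$. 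Therefore $\|\partial_tv_\epsilon\|_{L^2(H^1)}\le\|J_\epsilon\|_{L^2(Q_T)}\le C$. This is rigorous once $\partial_t u_\epsilon\in L^2(H^{-1})$, which follows from the weak formulation \eqref{def_weak1}.

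\emph{Convergences.} The bound $0\le u_\epsilon,v_\epsilon\le1$ gives \eqref{e5}. For \eqref{e6}, $v_\epsilon$ is bounded in $L^\infty(0,T;W^{2,q})$ with $\partial_tv_\epsilon$ bounded in $L^2(H^1)$. The embedding $W^{2,q}\subset\subset W^{1,q}$ is compact, so Aubin--Lions (Simon) gives strong convergence in $C([0,T];W^{1,q})$ and hence in $L^p(0,T;W^{1,p})$. Passing to the limit in the weak elliptic equation identifies the limit $v$ with $-\Delta v+v=u$, and weak-$*$ lower semicontinuity gives $v\in L^\infty(W^{2,q})$. Finally, \eqref{e7} follows from \eqref{e2}: $\|\epsilon\nabla u_\epsilon^m\|_{L^2}\le\sqrt\epsilon\,C\to0$.
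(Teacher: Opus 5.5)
Your proposal follows the paper's proof: \eqref{e2} comes from testing the $u_\epsilon$-equation by (a multiple of) $u_\epsilon^m$, writing $u_\epsilon(1-u_\epsilon)\nabla u_\epsilon^m$ as $\nabla G(u_\epsilon)$, integrating by parts against $\nabla v_\epsilon$ and using $-\Delta v_\epsilon=u_\epsilon-v_\epsilon$; \eqref{e3}--\eqref{e7} then follow from $W^{2,q}$ elliptic regularity, the time-differentiated elliptic equation tested by $\partial_t v_\epsilon$ with the flux bound \eqref{energybound}, Aubin--Lions, and \eqref{e2}, exactly as you describe, and your justification through the $\nu$-approximation makes explicit a step the paper leaves implicit. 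One remark should be dropped: the test by $u_\epsilon^m$ already gives \eqref{e2}, and so does the test by $u_\epsilon$ since $u_\epsilon^{2m-2}\le u_\epsilon^{m-1}$, whereas your suggested switch to $u_\epsilon^{2m-1}$ only controls $\epsilon\iint u_\epsilon^{3m-3}|\nabla u_\epsilon|^2$, which does not yield \eqref{e2} when $m>1$.
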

\begin{proof}
The $W^{2,q}$ regularity theorem for $-\Delta z+z=f$ with homogeneous Neumann boundary condition gives, for every $1<q<\infty$,
\[
\|v_\epsilon\|_{W^{2,q}(\Omega)}\le C\|u_\epsilon\|_{L^q(\Omega)}\le C,
\]
which proves \eqref{e3}. The weak limits in \eqref{e5} follow directly from \eqref{MaxPrinciple}.
Now we show \eqref{e2}. Multiplying $\eqref{eq:KSDC}_1$ by $m u_\epsilon^m$ and integrating over $Q_T$, we obtain
\beq\label{ming}\begin{aligned}
\epsilon\iint_{Q_T}|\nabla u_\epsilon^m|^2dxdt
&=\frac{m}{m+1}\left(\int_\Omega u_\epsilon^{m+1}(0)dx-\int_\Omega u_\epsilon^{m+1}(T)dx\right)\\
&\quad+m\chi\iint_{Q_T}u_\epsilon(1-u_\epsilon)\nabla u_\epsilon^m\cdot\nabla v_\epsilon\,dxdt\\
&\le C+m^2\chi\iint_{Q_T}
\nabla\left(\frac{u_\epsilon^{m+1}}{m+1}
-\frac{u_\epsilon^{m+2}}{m+2}\right)
\cdot\nabla v_\epsilon\,dxdt\\
&=C-m^2\chi\iint_{Q_T}
\left(\frac{u_\epsilon^{m+1}}{m+1}
-\frac{u_\epsilon^{m+2}}{m+2}\right)
\Delta v_\epsilon\,dxdt\\
&=C+m^2\chi\iint_{Q_T}
\left(\frac{u_\epsilon^{m+1}}{m+1}
-\frac{u_\epsilon^{m+2}}{m+2}\right)
(u_\epsilon-v_\epsilon)\,dxdt\le C.
\end{aligned}\eeq
The last bound follows from $0\le u_\epsilon,v_\epsilon\le1$ and $T<\infty$, and it implies \eqref{e2}.

To obtain \eqref{e4}, we observe from \eqref{energybound} that 
\beq
\|\p_t u_\epsilon\|_{L^2(0,T;H^{-1}(\Omega))}\le
\left\|\epsilon\nabla\frac{u_\epsilon^m}{m}-\chi u_\epsilon(1-u_\epsilon)\nabla v_\epsilon\right\|_{L^2(Q_T)}\le C.
\label{partial_t:u_1}
\eeq
Differentiating \eqref{eq:KSDC}$_2$ with respect to $t$, we have
\begin{equation*}
-\Delta \partial_t v_\epsilon+\partial_t v_\epsilon=\partial_t u_\epsilon
=\nabla\cdot\left(\epsilon\nabla\frac{u_\epsilon^m}{m}-\chi u_\epsilon(1-u_\epsilon)\nabla v_\epsilon\right)
\in L^2(0,T;H^{-1}(\Omega)).
\end{equation*}
Testing this equation by $\partial_t v_\epsilon$ yields
\[
\|\partial_t v_\epsilon\|_{L^2(0,T;H^1(\Omega))}
\le C\|\partial_t u_\epsilon\|_{L^2(0,T;H^{-1}(\Omega))},
\]
which implies \eqref{e4}.
Using the Aubin--Lions lemma, \eqref{e6} follows.

From \eqref{e2}, we have
\[ \|\epsilon\nabla u_\epsilon^m\|_{L^2(Q_T)}= \sqrt{\epsilon}\cdot\sqrt{\epsilon} \|\nabla u_\epsilon^m\|_{L^2(Q_T)}\le \sqrt{\epsilon} C,
\] which gives \eqref{e7}.
\end{proof}

\subsection{Kinetic Formulation}
In our case, the aggregation flux $\chi u_\epsilon(1-u_\epsilon)\nabla v_\epsilon$ contains a quadratic dependence on the density, so the weak convergence of $u_\epsilon$ is not sufficient to pass to the limit in this nonlinear term and strong convergence of the density is required. Following the kinetic method in \cite{bp2009}, we derive a kinetic formulation for the weak limit and identify it as a characteristic function, which yields the strong convergence of $u_\epsilon$.

For the weak solution $u_\epsilon$ of system \eqref{eq:KSDC}, the corresponding kinetic function $f_\epsilon$ is defined by 
\begin{equation}\label{f_m}
f_\epsilon(t,x,\xi)=\mathbbm{1}_{\xi<u_\epsilon(t,x)},\quad t\in [0,\infty),\ x\in\Omega,\ \xi\in \mathbb{R}_+.
\end{equation}
We extend $f_\epsilon$ by $1$ for $\xi<0$ and by $0$ for $\xi>1$.
Since $0\le f_\epsilon(t,x,\xi)\le1$, there exists $f\in L^\infty(Q_T\times\mathbb R)$ with $0\le f\le1$ such that, up to a subsequence,     
\begin{equation}\label{f_weak}
f_\epsilon\rightharpoonup^* f\quad\text{in }L^\infty(Q_T\times\mathbb R),\qquad\text{as }\epsilon\to0.
\end{equation}

The main result of this section is as follows:

\begin{proposition}\label{strong convergence of u}
Let $u_\epsilon$ be a weak solution of system \eqref{eq:KSDC} obtained as a limit of the approximation problem \eqref{eq:appro}, and let $u$ be its weak-star limit as $\epsilon\to0$. Then the limiting kinetic function satisfies 
\begin{equation}\label{f_def}
f=\mathbbm{1}_{\xi<u},\quad \text{a.e. in } Q_T\times \mathbb{R}_+.
\end{equation}
\end{proposition}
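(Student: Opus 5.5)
We follow the kinetic reduction of Perthame and Dalibard (\cite{bp2009}). The first step is to derive, for each $\epsilon>0$, a kinetic equation for $f_\epsilon=\mathbbm{1}_{\xi<u_\epsilon}$. We do this at the level of the approximation \eqref{eq:appro}, where solutions are smooth, and then let $\nu\to0$. Write the equation as
\[
\partial_t u_\epsilon+\chi\,\dv\big(A(u_\epsilon)\nabla v_\epsilon\big)=\epsilon\Delta\frac{u_\epsilon^m}{m},\qquad A(u)=u(1-u).
\]
Using $\dv(A(u)\nabla v)=A'(u)\nabla u\cdot\nabla v+A(u)\Delta v$ and $\Delta v_\epsilon=v_\epsilon-u_\epsilon$, the chain rule gives
\[
\partial_t f_\epsilon+\chi A'(\xi)\nabla v_\epsilon\cdot\nabla_x f_\epsilon
-\chi A(\xi)(u_\epsilon-v_\epsilon)\,\partial_\xi f_\epsilon...
\]
More precisely, we aim for
\[
\partial_t f_\epsilon+\chi A'(\xi)\nabla v_\epsilon\cdot\nabla_x f_\epsilon+\chi\, \partial_\xi\!\big(\dots\big)-\epsilon\Delta_x\!\Big(\int_0^\xi \!m\zeta^{m-1}d\zeta\ \text{-weighted } f_\epsilon\Big)=\partial_\xi m_\epsilon,
\]
where $m_\epsilon=\epsilon\,\delta(\xi-u_\epsilon)\,u_\epsilon^{m-1}|\nabla u_\epsilon|^2\ge0$ is the entropy-dissipation measure. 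Since the source $A(u_\epsilon)\Delta v_\epsilon$ evaluated at $\xi=u_\epsilon$ can be written with $\xi$ in place of $u_\epsilon$ on the support of $\delta(\xi-u_\epsilon)$, the force term becomes $-\chi A(\xi)(\xi-v_\epsilon)\partial_\xi f_\epsilon$, which is linear in $f_\epsilon$ with coefficients depending only on $(\xi,v_\epsilon)$. The total mass of $m_\epsilon$ is bounded uniformly: testing with $\xi$ reproduces \eqref{ming}, and then \eqref{e2} bounds $\int m_\epsilon$ by $C$. The diffusion term tends to $0$ in $\mathcal D'$ by \eqref{e7}.

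The second step is to pass to the limit. By \eqref{e6}, $\nabla v_\epsilon\to\nabla v$ and $v_\epsilon\to v$ strongly, and $f_\epsilon\rightharpoonup^* f$. Hence the products converge, and $m_\epsilon\rightharpoonup m\ge0$, a bounded measure. The limit equation is
\[
\partial_t f+\chi A'(\xi)\nabla v\cdot\nabla_x f-\chi A(\xi)(\xi-v)\partial_\xi f=\partial_\xi m,
\]
where $v\in L^\infty(0,T;W^{2,q})$, so the velocity field is $W^{1,q}$ in $x$ and Lipschitz in $\xi$. We also record the structural facts:
\begin{itemize}
\item $0\le f\le1$;
\item $f=1$ for $\xi<0$ and $f=0$ for $\xi>1$;
\item $-\partial_\xi f\ge0$, because $f_\epsilon$ is nonincreasing in $\xi$;
\item $u=\int_0^1 f\,d\xi$.
\end{itemize}
For the initial trace, we note that $f_\epsilon(0)=\mathbbm{1}_{\xi<u^0}$ is independent of $\epsilon$, and that the equation gives time-continuity of $\int f\phi$. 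Together these yield $f(0)=\mathbbm{1}_{\xi<u^0}$ in the weak trace sense.

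The third step, which is the main one, shows that $f^2=f$. Consider $F:=f(1-f)\ge0$. We want $\int_{\Omega}\int F\,d\xi\,dx$ to stay zero, since it vanishes at $t=0$. To do this, we regularize $f$ in $(x,\xi)$ by a mollifier, $f_\delta=f*\rho_\delta$, and apply the DiPerna--Lions commutator lemma. This is justified because the transport field $(A'(\xi)\nabla v,\,-A(\xi)(\xi-v))$ lies in $W^{1,1}_{loc}$ and its divergence is bounded. The commutator tends to $0$, and we obtain the equation for $f_\delta(1-f_\delta)$ with right-hand side $(1-2f_\delta)\partial_\xi m_\delta$. The sign argument of \cite{bp2009} now applies. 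Integrating in $\xi$ by parts, this term equals $2\int \partial_\xi f_\delta\, m_\delta\le0$, because $\partial_\xi f\le0$ and $m\ge0$. The transport part integrates to a term bounded by $C\int F$, using the bounded divergence and the no-flux boundary condition, with $\nabla v\cdot\vec n=0$ handling the $x$-boundary. In the limit $\delta\to0$, the product $\partial_\xi f_\delta\, m_\delta$ needs care, since both factors are measures. We handle it as in \cite{bp2009}, either by using a one-sided mollifier in $\xi$ or by keeping the inequality with the good sign before letting $\delta\to0$.

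Grönwall's inequality then gives $\int F=0$ for a.e.\ $t$. Hence $f\in\{0,1\}$, and $f$ is nonincreasing in $\xi$, so $f=\mathbbm{1}_{\xi<w}$ with $w=\int_0^1f\,d\xi=u$. This is \eqref{f_def}.

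The expected main obstacle is this last step: making the renormalization rigorous given the limited regularity of the field and the product of measures $\partial_\xi f\cdot m$. We will handle the product with the mollifier choice described above, and handle the boundary terms by reflecting across $\partial\Omega$ or by using test functions that are compatible with the Neumann condition.
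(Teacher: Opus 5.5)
There is a genuine gap in your second step, where you pass to the limit in $\chi g'(\xi)\nabla v_\epsilon\cdot\nabla_x f_\epsilon$ (with $g=A$) on the grounds that $\nabla v_\epsilon$ converges strongly and $f_\epsilon$ weakly-$*$. Since $f_\epsilon$ is only $L^\infty$, this term is defined in $\mathcal D'$ only after the derivative is moved onto the coefficient:
\[
\chi g'(\xi)\nabla v_\epsilon\cdot\nabla_x f_\epsilon
=\chi\nabla_x\cdot\big(g'(\xi)\nabla v_\epsilon f_\epsilon\big)-\chi g'(\xi)\Delta v_\epsilon f_\epsilon
=\chi\nabla_x\cdot\big(g'(\xi)\nabla v_\epsilon f_\epsilon\big)+\chi g'(\xi)(u_\epsilon-v_\epsilon)f_\epsilon .
\]
The coefficient $g'(\xi)\nabla v_\epsilon$ depends only on $v_\epsilon$, but its divergence $g'(\xi)(v_\epsilon-u_\epsilon)$ does not. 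Moreover, \eqref{e6} gives strong convergence of $v_\epsilon$ only in $L^p(0,T;W^{1,p}(\Omega))$. Strong convergence of $\Delta v_\epsilon$ would be equivalent to the strong convergence of $u_\epsilon$ that you are trying to prove.

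Replacing $u_\epsilon$ by $\xi$ is legitimate in the $\xi$-force, because there $u_\epsilon$ multiplies $\partial_\xi f_\epsilon$. It is not legitimate here, where $u_\epsilon$ multiplies $f_\epsilon$ itself. Hence $u_\epsilon f_\epsilon\rightharpoonup^*\rho$ with, in general, $\rho\neq uf$, and the limit equation is
\[
\partial_t f+\chi(\xi-v)g(\xi)\partial_\xi f+\chi g'(\xi)\nabla v\cdot\nabla_x f+R=\partial_\xi M,
\qquad R=\chi g'(\xi)(\rho-uf).
\]
A priori $R\neq0$. For instance, if $u_\epsilon$ oscillates equally between two values $a<b$, then $f=\frac12\mathbbm{1}_{\xi<a}+\frac12\mathbbm{1}_{\xi<b}$ and $\rho-uf=(b-a)/4>0$ on $(a,b)$. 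So your renormalization and Gr\"onwall step is applied to an equation that $f$ is not known to satisfy.

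The missing idea is the one the paper takes from \cite{bp2009}. First identify $\rho$ through the identity $u_\epsilon f_\epsilon=\xi f_\epsilon+\int_\xi^\infty f_\epsilon\,d\eta$, which is linear in $f_\epsilon$ and therefore passes to the weak limit, giving \eqref{rho_}. Then use that $f$ is nonincreasing in $\xi$ to get, for $0\le\xi\le1$,
\[
0\le\rho-uf=f(\xi)\int_0^\xi\big(1-f(\eta)\big)\,d\eta+\big(1-f(\xi)\big)\int_\xi^1 f(\eta)\,d\eta\le f(1-f).
\]
This is Lemma~\ref{proved}, and it gives $|R|\le Cf(1-f)$. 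With this bound, the additional contribution $\iint(1-2f)R$ to the evolution of $\iint f(1-f)\,d\xi\,dx$ is bounded by $C\iint f(1-f)$, so your Gr\"onwall argument closes. This is the content of the adapted Theorem~2.2 of \cite{bp2009}. Without this structural estimate on $R$, the argument does not go through.

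Two smaller points. The force term should read $+\chi(\xi-v_\epsilon)g(\xi)\partial_\xi f_\epsilon$; your sign is wrong but harmless. The uniform mass bound on $m_\epsilon$ follows directly from testing with $\xi$, which is the $L^2$ identity obtained by multiplying by $u_\epsilon$. It does not follow from \eqref{e2}, since $u^{2m-2}\le u^{m-1}$ goes the wrong way.
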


\vspace{2mm}
\noindent{\bf Derivation of the kinetic formulation.} To prove Proposition \ref{strong convergence of u}, 
we first introduce the kinetic formulation corresponding to Eq.~\eqref{eq:KSDC}, namely, the weak solution $u_\epsilon$ satisfies the following formulation (see for instance~\cite{dafermos2005hyperbolic, bp2002})
\begin{equation}\label{kf}
\partial_t f_\epsilon+\chi(\xi-v_\epsilon)g(\xi)\partial_\xi f_\epsilon
+\chi g'(\xi)\nabla v_\epsilon\cdot\nabla_x f_\epsilon
+\frac{\epsilon}{m}\nabla_x\cdot\big(\partial_\xi f_\epsilon\nabla u_\epsilon^m\big)
=\partial_\xi M_\epsilon,
\end{equation}
in $\mathcal D'(Q_T\times\mathbb R)$, where $g(\xi)=\xi(1-\xi)$. 
Here $M_\epsilon\ge0$ is a bounded measure, and \eqref{kf} holds in the
distributional sense in $Q_T\times\mathbb R$, together with
\[
f_\epsilon(0,x,\xi)
=\mathbbm{1}_{\{\xi<u^0(x)\}}.
\]

To apply the chain rule,
we consider the 
approximation problem (see, e.g., \cite{chenperthame2003}). For fixed $\epsilon>0$ and
$\nu>0$, let $(u_{\epsilon,\nu},v_{\epsilon,\nu})$ solve
\beq\label{eq:appro}
\begin{cases}
\partial_tu_{\epsilon,\nu}
-\dfrac{\epsilon}{m}\Delta u_{\epsilon,\nu}^m
-\nu\Delta u_{\epsilon,\nu}
+\chi\nabla\cdot\big(g(u_{\epsilon,\nu})\nabla v_{\epsilon,\nu}\big)=0,\\[4pt]
-\Delta v_{\epsilon,\nu}+v_{\epsilon,\nu}=u_{\epsilon,\nu},
\end{cases}
\eeq
with boundary conditions
\[
\left(\frac{\epsilon}{m}\nabla u_{\epsilon,\nu}^m
+\nu\nabla u_{\epsilon,\nu}
-\chi g(u_{\epsilon,\nu})\nabla v_{\epsilon,\nu}\right)\cdot\vec n=0,
\qquad
\nabla v_{\epsilon,\nu}\cdot\vec n=0,
\]
and smooth initial data $u_\nu^0\in C^\infty(\overline\Omega)$ satisfying the usual compatibility condition,
$0\le u_\nu^0\le1$, and
$u_\nu^0\to u^0$ strongly in $L^1(\Omega)$.
By the maximum principle,
$0\le u_{\epsilon,\nu},v_{\epsilon,\nu}\le1$. 
Let $\eta_\kappa(\cdot,\xi)\in C^2(\mathbb R)$, $\kappa>0$, be convex
functions such that
\[
\eta_\kappa(r,\xi)\to (r-\xi)_+,
\qquad
\partial_r\eta_\kappa(r,\xi)
\to \mathbbm{1}_{\{\xi<r\}}
\]
as $\kappa\to0$, and
\[
\partial_{rr}\eta_\kappa(r,\xi)
\rightharpoonup \delta_{\xi=r}
\]
in the sense of distributions.

The estimates are uniform in $\nu$ and yield
$u_{\epsilon,\nu}^m$ bounded in $L^2(0,T;H^1(\Omega))$,
$\sqrt{\nu}\,\nabla u_{\epsilon,\nu}$ bounded in $L^2(Q_T)$, and
$\partial_tu_{\epsilon,\nu}$ bounded in $L^2(0,T;H^{-1}(\Omega))$.
Hence, by the nonlinear Aubin--Lions compactness theorem
\cite[Theorem~1]{Mou16}, up to a subsequence,
\[
u_{\epsilon,\nu}\to u_\epsilon
\quad\text{strongly in }L^p(Q_T),\qquad 1\le p<\infty.
\]
Moreover,
\[
\nabla u_{\epsilon,\nu}^m\rightharpoonup\nabla u_\epsilon^m
\quad\text{weakly in }L^2(Q_T),
\qquad
\nu\nabla u_{\epsilon,\nu}\to0
\quad\text{strongly in }L^2(Q_T),
\]
and elliptic regularity gives
\[
v_{\epsilon,\nu}\to v_\epsilon
\quad\text{strongly in }L^2(0,T;H^1(\Omega)).
\]
Let
$f_{\epsilon,\nu}:=\mathbbm{1}_{\{\xi<u_{\epsilon,\nu}\}}$. Then
\[
\int_{\mathbb R}|f_{\epsilon,\nu}-f_\epsilon|\,d\xi
=|u_{\epsilon,\nu}-u_\epsilon|,
\]
so that $f_{\epsilon,\nu}\to f_\epsilon$ strongly in
$L^1(Q_T\times\mathbb R)$.

For fixed $\nu>0$, we apply the chain rule to the first equation of
\eqref{eq:appro} with the entropy $\eta_\kappa$ and then let
$\kappa\to0$. Differentiating the resulting entropy relation with
respect to $\xi$ in the distributional sense gives
\begin{equation}\label{kf_nu}
\begin{aligned}
\partial_t f_{\epsilon,\nu}
&+\chi(\xi-v_{\epsilon,\nu})g(\xi)\partial_\xi f_{\epsilon,\nu}
+\chi g'(\xi)\nabla v_{\epsilon,\nu}\cdot\nabla_x f_{\epsilon,\nu}\\
&+\frac{\epsilon}{m}\nabla_x\cdot
\big(\partial_\xi f_{\epsilon,\nu}\nabla u_{\epsilon,\nu}^m\big)
+\nu\nabla_x\cdot
\big(\partial_\xi f_{\epsilon,\nu}\nabla u_{\epsilon,\nu}\big)
=\partial_\xi M_{\epsilon,\nu},
\end{aligned}
\end{equation}
where
\[
M_{\epsilon,\nu}
:=\delta_{\xi=u_{\epsilon,\nu}}
\left(
\frac{4\epsilon}{(m+1)^2}
\left|\nabla u_{\epsilon,\nu}^{(m+1)/2}\right|^2
+\nu|\nabla u_{\epsilon,\nu}|^2
\right)\ge0.
\]
The corresponding initial kinetic datum is
\[
f_{\epsilon,\nu}(0,x,\xi)
=\mathbbm{1}_{\{\xi<u_\nu^0(x)\}}.
\]
Multiplying the first equation of \eqref{eq:appro} by
$u_{\epsilon,\nu}$ and integrating over $Q_T$, we obtain
\[
\begin{aligned}
\|M_{\epsilon,\nu}\|_{\mathcal M(Q_T\times\mathbb R)}
&=\frac{4\epsilon}{(m+1)^2}
\iint_{Q_T}\left|\nabla u_{\epsilon,\nu}^{(m+1)/2}\right|^2\,dxdt
+\nu\iint_{Q_T}|\nabla u_{\epsilon,\nu}|^2\,dxdt\\
&=\frac12\int_\Omega
\big((u_\nu^0)^2-u_{\epsilon,\nu}^2(T)\big)\,dx+\chi\iint_{Q_T}
\left(\frac{u_{\epsilon,\nu}^2}{2}
-\frac{u_{\epsilon,\nu}^3}{3}\right)
(u_{\epsilon,\nu}-v_{\epsilon,\nu})\,dxdt
\le C_T,
\end{aligned}
\]
where $C_T$ is independent of $\epsilon$ and $\nu$. Hence, up to a
subsequence,
\[
M_{\epsilon,\nu}\rightharpoonup^*M_\epsilon
\qquad\text{in }\mathcal M(Q_T\times[0,1])
\quad\text{as }\nu\to0.
\]
Using the convergences above, we may pass to the limit $\nu\to0$ in
the integrated form of \eqref{kf_nu}. The artificial-viscosity term
vanishes since
\[
\|\nu\nabla u_{\epsilon,\nu}\|_{L^2(Q_T)}
\le\sqrt{\nu}\,
\|\sqrt{\nu}\,\nabla u_{\epsilon,\nu}\|_{L^2(Q_T)}\to0.
\]
We thus obtain \eqref{kf}, together with
\[
f_\epsilon(0,x,\xi)
=\mathbbm{1}_{\{\xi<u^0(x)\}}.
\]

\vspace{2mm}
\paragraph{\bf Passing to the limit.}
The weak-* lower semicontinuity gives
\[
\|M_\epsilon\|_{\mathcal M(Q_T\times\mathbb R_+)}\le C_T
\]
uniformly in $\epsilon$. Hence, up to a subsequence,
\[
M_\epsilon\rightharpoonup^*M
\qquad\text{in }\mathcal M(Q_T\times[0,1]).
\]

In the kinetic formulation \eqref{kf}, we study separately the limits of the two nonlinear terms $\frac{\epsilon}{m}\nabla\cdot(\partial_\xi f_\epsilon\nabla u_\epsilon^m)$ and $\chi g'(\xi)\nabla v_\epsilon\cdot\nabla f_\epsilon$.

\vspace{2mm}
\paragraph{\underline{For $\frac{\epsilon}{m}\nabla\cdot(\partial_\xi f_\epsilon\nabla u_\epsilon^m)$}}
For any smooth test function $\varphi\in C_c^\infty(Q_T\times\mathbb R_+)$, \eqref{e7} gives
\[
\iiint\frac{\epsilon}{m}\nabla\cdot(\partial_\xi f_\epsilon\nabla u_\epsilon^m)\varphi
=\iiint\frac{\epsilon}{m}f_\epsilon\nabla u_\epsilon^m\cdot\nabla_x\partial_\xi\varphi\to0.
\]

\paragraph{\underline{For $\chi g'(\xi)\nabla v_\epsilon\cdot\nabla f_\epsilon$}}
We write
\[\begin{aligned}
\chi g'(\xi)\nabla v_\epsilon\cdot\nabla f_\epsilon
&=\chi\nabla\cdot\big(g'(\xi)\nabla v_\epsilon f_\epsilon\big)
-\chi g'(\xi)\Delta v_\epsilon f_\epsilon\\
&=\chi\nabla\cdot\big(g'(\xi)\nabla v_\epsilon f_\epsilon\big)
+\chi(u_\epsilon-v_\epsilon)g'(\xi)f_\epsilon.
\end{aligned}\]
By the strong convergence of $v_\epsilon$ and $\nabla v_\epsilon$ and the weak-star convergence of $f_\epsilon$,
\[\begin{aligned}
\chi\nabla\cdot(g'(\xi)\nabla v_\epsilon f_\epsilon)&\rightharpoonup
\chi\nabla\cdot(g'(\xi)\nabla v f),\\
\chi v_\epsilon g'(\xi)f_\epsilon&\rightharpoonup\chi v g'(\xi)f.
\end{aligned}\]
Notice that the remaining term $u_\epsilon g'(\xi)f_\epsilon$ is the difficulty due to the absence of a strong limit of density. To deal with it, we recall that $\|u_\epsilon f_\epsilon\|_{L^\infty}$ is bounded, so that there exists a limit function $\rho=\rho(t,x,\xi)\in [0,1]$ such that
\begin{equation}\label{roh_inf}
u_\epsilon f_\epsilon\rightharpoonup^* \rho
\quad\text{in }L^\infty(Q_T\times\mathbb R),
\quad\text{as }\epsilon\to0.
\end{equation}
Passing to the limit as $\epsilon\to0$ yields
\begin{equation}\label{limit2}
\chi g'(\xi)\nabla v_\epsilon\cdot\nabla f_\epsilon\to
\chi g'(\xi)\nabla v\cdot\nabla f+\chi g'(\xi)(\rho-uf)
\quad\text{in }\mathcal D'(Q_T\times{\mathbb R}).
\end{equation}

In addition, the limit $\rho$ can be identified as follows.
From the identity
\[
u_\epsilon f_\epsilon
=\xi f_\epsilon+\int_\xi^\infty f_\epsilon(\eta)\,d\eta,
\]
passing to the limit in the sense of distributions with respect to
$\xi$ gives
\[
-\partial_\xi(\rho-\xi f)=f.
\]
Since $f=\rho=0$ for $\xi>1$, we obtain
\begin{equation}\label{rho_}
\rho(t,x,\xi)-\xi f(t,x,\xi)
=\int_\xi^\infty f(t,x,\eta)\,d\eta.
\end{equation}

The following estimate is given in \cite[Lemma 4.1]{bp2009}, and we omit the details.
\begin{lemma}\label{proved}
For $T>0$, set
\[
C:=\limsup_{\epsilon\to0} \|u_\epsilon\|_{L^\infty(Q_T)}.
\]
Then, with $\rho$ given in \eqref{rho_}, for a.e. $(t,x,\xi)\in Q_T\times\mathbb R$, we have
\[
|\rho(t,x,\xi)-u(t,x)f(t,x,\xi)|\le Cf(t,x,\xi)(1-f(t,x,\xi)).
\]
\end{lemma}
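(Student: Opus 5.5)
Since the limits are only weak-star, I would argue pointwise in $(t,x)$ using only the structural properties that $f$ inherits from the $f_\epsilon$. Fix $(t,x)$ outside a null set and write $f(\xi)=f(t,x,\xi)$. First I record these properties. Since $f_\epsilon=\mathbbm 1_{\xi<u_\epsilon}$ is nonincreasing in $\xi$, equals $1$ for $\xi<0$ and $0$ for $\xi>1$, and these properties are stable under weak-star limits (test against nonnegative functions of the form $\phi(t,x)\psi(\xi)$), we get that $0\le f\le1$ and $\partial_\xi f\le0$ in the sense of distributions. We also get $f=1$ for $\xi<0$ and $f=0$ for $\xi>1$. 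In addition, $\int_{\mathbb R_+} f_\epsilon\,d\xi=u_\epsilon$ passes to the limit to give $\int_0^\infty f(\xi)\,d\xi=u$. Together with \eqref{rho_}, this gives the explicit formula
\[
\rho(\xi)-u f(\xi)=\xi f(\xi)+\int_\xi^\infty f(\eta)\,d\eta-f(\xi)\int_0^\infty f(\eta)\,d\eta .
\]
The idea is to compare the two integrals using monotonicity of $f$.

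Next I split $\int_0^\infty f=\int_0^\xi f+\int_\xi^\infty f$ for $\xi\in[0,1]$, which gives
\[
\rho-uf=(1-f(\xi))\int_\xi^\infty f(\eta)\,d\eta-f(\xi)\int_0^\xi\big(f(\eta)-1\big)\,d\eta
=(1-f(\xi))\int_\xi^\infty f\,d\eta+f(\xi)\int_0^\xi(1-f)\,d\eta .
\]
Both terms are nonnegative. Monotonicity gives $f(\eta)\le f(\xi)$ for $\eta>\xi$ and $1-f(\eta)\le1-f(\xi)$ for $\eta<\xi$, at least for a.e.\ $\eta$ once we use the monotone (say right-continuous) representative in $\xi$. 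Hence
\[
0\le\rho-uf\le f(1-f)\Big(\int_\xi^{\bar C}d\eta+\int_0^\xi d\eta\Big)\le \bar C\,f(1-f),
\]
where $\bar C$ is an upper bound for the support in $\xi$ of $f$. That support is contained in $[0,\limsup\|u_\epsilon\|_{L^\infty}]$, because $f_\epsilon=0$ for $\xi>\|u_\epsilon\|_\infty$. This yields the claimed bound with $C=\limsup_{\epsilon\to0}\|u_\epsilon\|_{L^\infty(Q_T)}\le1$. For $\xi<0$ both sides vanish, since $f=1$ and $\rho=u$ there (because $u_\epsilon f_\epsilon=u_\epsilon$).

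The main technical obstacle is the pointwise a.e.\ justification. The weak-star limit $f$ is only an $L^\infty$ class in $(t,x,\xi)$, so the monotonicity and the identities $\int f\,d\xi=u$ and \eqref{rho_} must be known for a.e.\ $(t,x)$ as functions of $\xi$. I would handle this by testing against tensor products $\phi(t,x)\psi(\xi)$ with a countable dense family of $\psi$. This shows that for a.e.\ $(t,x)$ the slice $\xi\mapsto f(t,x,\xi)$ is a.e.\ equal to a nonincreasing function, and that the integral identities hold for that slice. The pointwise inequality then holds for a.e.\ $\xi$, and Fubini gives the conclusion a.e.\ in $Q_T\times\mathbb R$.
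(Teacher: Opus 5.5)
Your proof is correct. The paper omits the proof and only cites \cite[Lemma~4.1]{bp2009}, and your argument is the standard one behind that result: the decomposition $\rho-uf=(1-f)\int_\xi^\infty f\,d\eta+f\int_0^\xi(1-f)\,d\eta\ge0$, the monotonicity of $f$ in $\xi$, and $f=0$ for $\xi>C$, with your slicing argument adequately handling the a.e.\ issues. One cosmetic point: for $\xi>C$ the factor $\int_\xi^{C}d\eta$ should read $(C-\xi)_+$, though both sides vanish there anyway.
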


The remaining terms in \eqref{kf} pass to the limit by
\eqref{e6}, \eqref{f_weak}, and the weak-star convergence of
$M_\epsilon$. Hence,
\[
\partial_t f+\chi(\xi-v)g(\xi)\partial_\xi f
+\chi g'(\xi)\nabla v\cdot\nabla_x f+R
=\partial_\xi M,
\qquad
R:=\chi g'(\xi)(\rho-uf).
\]
By Lemma \ref{proved}, $|R|\le Cf(1-f)$.

We also note that the structural properties of $f_\epsilon$ pass to the
limit:
\[
0\le f\le1,\qquad \partial_\xi f\le0,\qquad
f=1\ \text{for }\xi<0,\qquad f=0\ \text{for }\xi>1,
\]
and
\[
u=\int_0^1 f(t,x,\xi)\,d\xi.
\]
Moreover, $M\ge0$ is supported in $0\le\xi\le1$.
Passing to the limit in the kinetic formulation with test functions not vanishing at $t=0$, we obtain
\[
f(0,x,\xi)=\mathbbm{1}_{\{\xi<u^0(x)\}}.
\]
Passing to the limit in the elliptic equation also gives
\[
-\Delta v+v=u,\qquad \nabla v\cdot\vec n=0.
\]
Hence all the assumptions of the following theorem are satisfied.

The following theorem yields Proposition~\ref{strong convergence of u}.
\begin{theorem}[{Adapted from \cite[Theorem~2.2]{bp2009}}]
Consider a weak solution of the kinetic equation
\beq\label{cite}\bepa\begin{aligned}
&\partial_t f+\chi(\xi-v)g(\xi)\partial_\xi f+\chi g'(\xi)\nabla v\cdot\nabla f+R(t,x,\xi)=\partial_\xi M,\\
&M\ge0\text{ is a bounded measure on }[0,T]\times\Omega\times\mathbb R,\\
&f(0,x,\xi)=\mathbbm{1}_{\xi<u_0(x)},\\
&-\Delta v+v=u:=\int_0^\infty f(t,x,\xi)d\xi,\qquad\nabla v\cdot\vec n=0.
\end{aligned}\eepa\eeq
satisfying the following properties:\\
(i) $0\le f(t,x,\xi)\le1$ and $f=1$ for $\xi<0$, $f=0$ for $\xi>1$, $f$ is nonincreasing in $\xi$,\\
(ii) there exists a constant $C>0$ such that $|R|\le Cf(1-f)$ almost everywhere,\\
(iii) the measure $M$ vanishes for $\xi<0$ or $\xi>1$.\\
Then,
\[
f(t,x,\xi)=\mathbbm{1}_{\{\xi<u(t,x)\}}\quad\text{a.e.}
\]
\end{theorem}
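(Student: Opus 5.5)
The plan is the standard kinetic-reduction argument of Perthame and co-workers: show that the quantity $\int f(1-f)\,d\xi$ (equivalently $\int (f-f^2)$) vanishes for all times. Since $0\le f\le1$, this forces $f\in\{0,1\}$ a.e. Because $f$ is nonincreasing in $\xi$, equals $1$ for $\xi<0$ and $0$ for $\xi>1$, it must then be $\mathbbm{1}_{\{\xi<\tilde u\}}$ for some $\tilde u$. The identity $u=\int_0^\infty f\,d\xi$ then gives $\tilde u=u$.

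\textbf{Step 1: equation for $f^2$.} Set $a(\xi)=\chi(\xi-v)g(\xi)$ and $b(\xi)=\chi g'(\xi)\nabla v$. Note $\partial_\xi a+\nabla\cdot b = \chi[(\xi-v)g'(\xi)+g(\xi)] + \chi g'(\xi)\Delta v$, and $v\in L^\infty(0,T;W^{2,q})$, so all coefficients are bounded. I would regularize $f$ in $(t,x,\xi)$ by convolution, $f_\delta=f*\omega_\delta$, and multiply the regularized equation by $2f_\delta$ (for $f^2$) and by $1$. This yields an equation for $f_\delta(1-f_\delta)$ whose commutator errors vanish as $\delta\to0$ by DiPerna--Lions, since the coefficients are $W^{1,1}_{loc}$ in $(x,\xi)$.

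\textbf{Step 2: the measure term.} The key term is $(1-2f)\partial_\xi M$. I would write it as $\partial_\xi\big((1-2f)M\big)+2\partial_\xi f\,M$. The first part integrates to zero in $\xi$, because $M$ is supported in $[0,1]$ and we test with a function constant in $\xi$ on a neighborhood of $[0,1]$. The second part is $\le0$, since $\partial_\xi f\le0$ and $M\ge0$.

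\textbf{Main obstacle: the product $\partial_\xi f\cdot M$.} This is a product of a measure with a BV-in-$\xi$ function. I would handle it as in \cite{bp2009}: regularize only in $(t,x)$, use the monotonicity of $f$ in $\xi$ to choose one-sided traces so that the sign survives, and then let the regularization go to zero.

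\textbf{Step 3: Gronwall and conclusion.} Integrate over $\Omega\times\mathbb R$ and use the no-flux condition $\nabla v\cdot\vec n=0$, which kills the boundary term in $\nabla\cdot(b\,f(1-f))$. The $\xi$-transport term leaves $\int(\partial_\xi a+\nabla\cdot b)f(1-f)$, which is bounded by $C\int f(1-f)$. By (ii), $|(1-2f)R|\le Cf(1-f)$. Hence
\[
\frac{d}{dt}\iint_{\Omega\times\mathbb R} f(1-f)\,dx\,d\xi\le C\iint_{\Omega\times\mathbb R} f(1-f)\,dx\,d\xi .
\]
The initial datum is a characteristic function, so the integral vanishes at $t=0$; this initial value is attained in the trace sense, justified by testing with time cutoffs. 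Gronwall then gives $f(1-f)=0$ a.e., and the reduction described at the start concludes the proof.
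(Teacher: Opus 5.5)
Your outline is correct and is essentially the kinetic-reduction argument of \cite{bp2009}, which the paper cites without reproducing: a Gronwall estimate for $\iint f(1-f)\,dx\,d\xi$, in which the measure term has a sign because $\partial_\xi f\le0$ and $M\ge0$, and the error term is absorbed using $|R|\le Cf(1-f)$. The ``main obstacle'' you single out is already removed by the $\xi$-mollification of your Step~1 (mollify in $(x,\xi)$ only; there is no need to mollify in $t$), since then $\partial_\xi f_\delta\le0$ and $M_\delta\ge0$ are smooth in $\xi$ and $\int(1-2f_\delta)\,\partial_\xi M_\delta\,d\xi=2\int\partial_\xi f_\delta\,M_\delta\,d\xi\le0$ directly; for the remaining steps, $|R_\delta|\le C\,(f(1-f))_\delta\le Cf_\delta(1-f_\delta)$ by concavity of $s\mapsto s(1-s)$, and the boundary $\partial\Omega$ is handled by an interior cutoff using $\nabla v\cdot\vec n=0$.
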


\noindent\textbf{Proof of Theorem \ref{Theorem_7}.} Notice that \eqref{T7_1} and \eqref{ss1} have been proved in Lemma~\ref{cms}. By Proposition~\ref{strong convergence of u}, $f=\mathbbm{1}_{\xi<u}$. 
Hence $f(1-f)=0$ a.e., and the estimate
$|R|\le Cf(1-f)$ gives $R=0$ a.e. Therefore, the limit is a kinetic entropy solution of \eqref{hss2}. Since $0\le u_\epsilon\le1$,
\[
u_\epsilon=\int_0^1 f_\epsilon\,d\xi,\qquad u_\epsilon^2=2\int_0^1\xi f_\epsilon\,d\xi.
\]
The weak-star convergence of $f_\epsilon$ therefore gives $u_\epsilon\rightharpoonup u$ in $L^2(Q_T)$ and
\[
\int_{Q_T}u_\epsilon^2\,dx\,dt\to\int_{Q_T}u^2\,dx\,dt.
\]
Hence $u_\epsilon\to u$ strongly in $L^2(Q_T)$, and the uniform $L^\infty$ bound yields strong convergence in $L^p(Q_T)$ for every $1\le p<\infty$. Using \eqref{ss1} and \eqref{e7}, we pass to the limit in \eqref{def_weak1} and obtain \eqref{hss2}. $\hfill\square$ 

\section{\texorpdfstring{Static sharp-interface limit as $D\to0$ and $\chi\to\infty$}{Static sharp-interface limit}}\label{section4}
We now consider system~\eqref{eq:KSDC} under the scaling $\chi=\epsilon^{-1}$, $D=\epsilon^2$, and $\delta=1$, in which case the system reads
\beq \bepa
\begin{aligned}
&\p_t u_\epsilon - \Delta \f{(u_\epsilon)^m}{m} + \dv [ \epsilon^{-1} u_\epsilon(1-u_\epsilon) \nabla v_\epsilon ]=0,
\\
&- \epsilon^{2} \Delta  v_\epsilon +  v_\epsilon  =  u_\epsilon.
\end{aligned}
\eepa
\label{eq:KSDC11}
\eeq 
For $u\in L^2(\Omega)$, let $v_\epsilon[u]\in H^1(\Omega)$ be the weak solution of
\[
-\epsilon^2\Delta v_\epsilon[u]+v_\epsilon[u]=u,
\qquad \nabla v_\epsilon[u]\cdot\vec n=0,
\]
and define
\beq\label{J}
\mathscr J_\epsilon(u):=\frac{1}{2\epsilon}
\int_\Omega u\bigl(1-v_\epsilon[u]\bigr)\,dx.
\eeq

\begin{theorem}\label{Theorem10}
Let $m\ge1$, $\chi=\epsilon^{-1}$, $D=\epsilon^2$, and $\delta=1$. Let $\widetilde\Omega\subset\Omega$ be a nontrivial finite-perimeter set, let $u_\epsilon^0=\mathbbm{1}_{\widetilde\Omega}$, and assume
\[
\sup_{0<\epsilon\le1}\mathscr J_\epsilon(u_\epsilon^0)<\infty.
\]
Here $\mathscr J_\epsilon$ is defined in \eqref{J}.
Let $(u_\epsilon,v_\epsilon)$ be energy solutions of \eqref{eq:KSDC11}. Then, up to a subsequence,
\beq\label{T10_1}
u_\epsilon,v_\epsilon\to u\qquad\text{strongly in }L^\infty((0,T);L^1(\Omega)).
\eeq
The limit is independent of time and
\beq\label{T10_2}
u\in BV(\Omega;\{0,1\}),\qquad u=\mathbbm{1}_{\widetilde\Omega}\quad\text{a.e. in }\Omega.
\eeq
Moreover,
\beq\label{gr}
\int_\Omega|Du|\le4\liminf_{\epsilon\to0}\mathscr J_\epsilon(u_\epsilon(t))\qquad\text{for a.e. }t\in(0,T).
\eeq
\end{theorem}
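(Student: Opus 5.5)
The proof rests on the energy of Definition~\ref{weaksolution:D} with $\chi=\epsilon^{-1}$. In that case
\[
\cae_\epsilon(u)=\int_\Omega\Phi(u)\,dx+\mathscr J_\epsilon(u).
\]
For characteristic initial data we have $\Phi(u^0_\epsilon)=\Phi(0)$ or $\Phi(1)$ pointwise, and both values are finite and independent of $\epsilon$. For $m=1$ this is $\ln 2$; for $m>1$ we must check that $\int_0^1\vp$ is finite. If $\vp(w)\sim -\ln(1-w)$ near $1$, the integral converges, so this holds. The hypothesis $\sup_\epsilon\mathscr J_\epsilon(u^0)<\infty$ therefore gives $\cae_\epsilon(u^0)\le C$. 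Since $\Phi\ge0$, the energy inequality yields two uniform bounds. The first is $\sup_t\mathscr J_\epsilon(u_\epsilon(t))\le C$. The second is the dissipation bound
\[
\int_0^T\!\!\int_\Omega\frac{1}{u_\epsilon(1-u_\epsilon)}\Big|\nabla\tfrac{u_\epsilon^m}{m}-\epsilon^{-1}u_\epsilon(1-u_\epsilon)\nabla v_\epsilon\Big|^2\le C,
\]
and since $u_\epsilon(1-u_\epsilon)\le\frac14$ this controls the flux $F_\epsilon:=\nabla\frac{u_\epsilon^m}{m}-\epsilon^{-1}u_\epsilon(1-u_\epsilon)\nabla v_\epsilon$ in $L^2(Q_T)$ uniformly.

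The next step is to turn the bound on $\mathscr J_\epsilon$ into $BV$-type compactness at a.e.\ fixed time. Using $v=v_\epsilon[u]$ and $u=v-\epsilon^2\Delta v$, we write
\[
2\epsilon\mathscr J_\epsilon(u)=\int_\Omega u(1-u)\,dx+\int_\Omega u(u-v)\,dx,
\qquad
\int_\Omega u(u-v)\,dx=\epsilon^2\!\int|\Delta v|^2+\epsilon^2\!\int|\nabla v|^2\ge0 .
\]
The identity follows by testing the elliptic equation with $u$. This gives two consequences: $\int u_\epsilon(1-u_\epsilon)\le C\epsilon$, so the limit takes values in $\{0,1\}$, and $\epsilon\int|\nabla v_\epsilon|^2\le C$. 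The latter is a Modica--Mortola type control, which is what is needed to get BV compactness of $v_\epsilon$. I would try to prove
\[
\int_\Omega|\nabla W(v_\epsilon)|\lesssim \mathscr J_\epsilon(u_\epsilon)+o(1)
\]
for a double-well primitive $W$ with $W'(s)\approx 2\sqrt{s(1-s)}$, using Young's inequality in the form $\epsilon|\nabla v|^2+\epsilon^{-1}v(1-v)\ge 2\sqrt{v(1-v)}|\nabla v|$. The quantity $\epsilon^{-1}\int v(1-v)$ is controlled because $\int v(1-v)\le\int u(1-v)+\int(v-u)(1-v)$, and the last term is $O(\epsilon\mathscr J_\epsilon)$ from the identity above together with Cauchy--Schwarz. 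Alternatively, and more cleanly, one can work with the nonlocal perimeter representation $\mathscr J_\epsilon(u)=\frac{1}{2\epsilon}\iint K_\epsilon(x,y)u(x)(1-u(y))\,dx\,dy$, where $K_\epsilon$ is the Neumann Green kernel of $1-\epsilon^2\Delta$. This is a Bourgain--Brezis--Mironescu/Ponce type functional with $\int|z|K_1(z)\,dz=2$ in the whole-space normalization, which is the source of the constant $4$ in \eqref{gr}. Ponce's compactness and $\Gamma$-liminf results then give precompactness of $\{u_\epsilon(t)\}$ in $L^1(\Omega)$ for each $t$, the bound $|Du(t)|(\Omega)\le 4\liminf\mathscr J_\epsilon(u_\epsilon(t))$, and $\|u_\epsilon-v_\epsilon\|_{L^1}\to0$. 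I expect this step to be the main obstacle. Two points need care: the boundary effects of the Neumann kernel, which I would handle by comparison with the whole-space kernel on interior sets and by using that $|\Omega|$ is smooth, and extracting exactly the constant $4$.

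For compactness in time, $\partial_tu_\epsilon=\dv F_\epsilon$ is bounded in $L^2(0,T;H^{-1})$, so $u_\epsilon$ is equicontinuous in $H^{-1}$. Combined with pointwise-in-time $L^1$ precompactness and uniform boundedness, a Simon/Arzel\`a--Ascoli argument gives convergence in $C([0,T];L^1)$, hence \eqref{T10_1}, with $u(t)\in BV(\Omega;\{0,1\})$. It remains to show time independence. From $\int u_\epsilon(1-u_\epsilon)\to0$ and the weighted dissipation, I would split $F_\epsilon$. Since $\big|\frac{F_\epsilon}{\sqrt{u_\epsilon(1-u_\epsilon)}}\big|_{L^2}\le C$, we get $\|F_\epsilon\|_{L^1(Q_T)}\le C\|\sqrt{u_\epsilon(1-u_\epsilon)}\|_{L^2(Q_T)}\le C\sqrt{\epsilon}\to0$. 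Hence $\partial_tu=0$ in $\mathcal D'$, and by continuity $u(t)=u(0)$. Because $u_\epsilon^0=\mathbbm 1_{\widetilde\Omega}$ is fixed, $u(0)=\mathbbm 1_{\widetilde\Omega}$, which is \eqref{T10_2}. Strong convergence of $v_\epsilon$ to the same limit follows from $\|u_\epsilon-v_\epsilon\|_{L^1}\to0$ uniformly in $t$, which comes from the identity above: $\|u-v\|_{L^2}^2\le\int u(u-v)\cdot$(bounded) $\lesssim\epsilon\mathscr J_\epsilon$.
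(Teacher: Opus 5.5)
Your architecture matches the paper's. The common ingredients are the energy bound $\sup_t\mathscr J_\epsilon(u_\epsilon(t))\le C$; the identity giving $\int_\Omega u_\epsilon(1-u_\epsilon)\,dx\le C\epsilon$ and $\|u_\epsilon-v_\epsilon\|_{L^2(\Omega)}^2\le C\epsilon$; $H^{-1}$ equicontinuity combined with $L^1$ compactness uniform in time; and the time-independence argument bounding the flux in $L^1(Q_T)$ by $C\|\sqrt{u_\epsilon(1-u_\epsilon)}\|_{L^2(Q_T)}\le C\sqrt\epsilon$, which is exactly the paper's Step~4.

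The gap is in the step you flag yourself: as written, neither of your routes proves \eqref{gr} with the constant $4$.

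Your local Modica--Mortola route uses the potential $v(1-v)$. It gives $\int_\Omega|\nabla W(v_\epsilon)|\le 2\mathscr J_\epsilon+2\epsilon\int_\Omega|\nabla v_\epsilon|^2\le 6\mathscr J_\epsilon$ with $W(1)=\pi/4$, hence only $\int_\Omega|Du|\le\frac{24}{\pi}\liminf\mathscr J_\epsilon$. That is enough for \eqref{T10_1}--\eqref{T10_2}. Even there, the control of $\epsilon^{-1}\int v_\epsilon(1-v_\epsilon)$ does not follow from Cauchy--Schwarz, which only gives $|\int(v-u)(1-v)|\le C\sqrt\epsilon$. It follows from the exact identity $\int_\Omega(v-u)(1-v)\,dx=\epsilon^2\int_\Omega|\nabla v|^2dx$, which uses $\int_\Omega\Delta v\,dx=0$.

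Your nonlocal route is only a plan. Ponce's compactness and $\Gamma$-liminf theorems concern translation-invariant radial kernels, and the Neumann Green function of $1-\epsilon^2\Delta$ on $\Omega$ is not of that form. You would have to bound it below by the whole-space kernel up to an exponentially small error on $\Omega'\times\Omega'$, $\Omega'\Subset\Omega$, and then exhaust $\Omega$; alternatively, quote the $\Gamma$-convergence of \cite{kim2024density}. Also, the moment that produces the constant is $\int K_1(z)|z\cdot e|\,dz=1$; the value $\int|z|K_1\,dz=2$ holds only for $n=3$. Finally, $\int u(u-v)=\epsilon^2\int|\nabla v|^2+\epsilon^4\int|\Delta v|^2$, not $\epsilon^2\int|\Delta v|^2$; the sign conclusion is unaffected.

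The idea that makes this step elementary is to use the second form of \eqref{energy1},
\[
\mathscr J_\epsilon(u_\epsilon)=\frac1{2\epsilon}\int_\Omega\big[(1-u_\epsilon)v_\epsilon^2+u_\epsilon(1-v_\epsilon)^2\big]dx+\frac\epsilon2\int_\Omega|\nabla v_\epsilon|^2dx .
\]
Here the potential is a convex combination of $v^2$ and $(1-v)^2$, so it dominates $\min\{v,1-v\}^2$. It also enters $\mathscr J_\epsilon$ with no extra gradient cost, whereas $\epsilon^{-1}\int v(1-v)$ costs an additional $\epsilon\int|\nabla v|^2$.

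With $F(v)=\int_0^v4\min\{\sigma,1-\sigma\}\,d\sigma$, Young's inequality gives pointwise $|\nabla F(v_\epsilon)|\le2\epsilon^{-1}\min\{v_\epsilon,1-v_\epsilon\}^2+2\epsilon|\nabla v_\epsilon|^2$. Hence $\int_\Omega|\nabla F(v_\epsilon)|\le4\mathscr J_\epsilon(u_\epsilon)$, with no kernel or boundary analysis, and $F(0)=0$, $F(1)=1$ make the constant exact.

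Since $F$ is Lipschitz and $|F(s)-s|\le s(1-s)$, one has $\|F(v_\epsilon)-u_\epsilon\|_{L^2}\le C\sqrt\epsilon$. Through the appendix lemma and the $H^{-1}$ equicontinuity, this transfers the uniform $BV$ bound on $F(v_\epsilon)$ to $L^\infty(0,T;L^1)$ compactness of $u_\epsilon$ and $v_\epsilon$. Lower semicontinuity of the total variation along $F(v_\epsilon(t))\to u(t)$ then gives \eqref{gr} directly.
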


In the energy \eqref{def:energy}, the functional \eqref{J} is the singular part
\[
\mathscr{J}_\epsilon(u_\epsilon )=\frac{1}{2\epsilon}\int_\Omega u_\epsilon(1-v_\epsilon )\,dx.
\]
It does not behave well when $\epsilon$ is small. However, the energy dissipation provides a uniform bound on $\mathscr{J}_\epsilon(u_\epsilon)$, as shown in Lemma~\ref{boundedness_J}.

Before proving Theorem~\ref{Theorem10}, we mention some preliminary formulas.

\begin{lemma}[Singular-energy bound]\label{boundedness_J}
Under the assumptions of Theorem \ref{Theorem10},
\[
0\le\mathscr J_\epsilon(u_\epsilon(t))\le C\qquad\text{for a.e. }t\ge0,
\]
where $C$ is independent of $\epsilon$.
\end{lemma}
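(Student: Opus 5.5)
The bound will come straight from the energy inequality in Definition~\ref{weaksolution:D}, written for the scaling $\chi=\epsilon^{-1}$, $\delta=1$, $D=\epsilon^2$. In this scaling the energy \eqref{def:energy} splits as
\[
\cae(u_\epsilon)=\int_\Omega \Phi(u_\epsilon)\,dx+\frac{1}{2\epsilon}\int_\Omega u_\epsilon(1-v_\epsilon)\,dx
=\int_\Omega \Phi(u_\epsilon)\,dx+\mathscr J_\epsilon(u_\epsilon),
\]
where $v_\epsilon=v_\epsilon[u_\epsilon]$ is the solution of the elliptic equation at time $t$. The energy-solution inequality drops the nonnegative dissipation term and gives, for a.e.\ $t$,
\[
\int_\Omega\Phi(u_\epsilon(t))\,dx+\mathscr J_\epsilon(u_\epsilon(t))\le \int_\Omega\Phi(u^0_\epsilon)\,dx+\mathscr J_\epsilon(u^0_\epsilon).
\]

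\textbf{Step 1 (lower bounds).} Since $\Phi\ge0$ by construction (the constant $a_\delta$ is chosen for this), we get $\mathscr J_\epsilon(u_\epsilon(t))\le \cae(u_\epsilon(t))$. For nonnegativity, $0\le u_\epsilon,v_\epsilon\le1$ by \eqref{MaxPrinciple}, so $u_\epsilon(1-v_\epsilon)\ge0$ pointwise and hence $\mathscr J_\epsilon\ge0$.

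\textbf{Step 2 (bound on the initial energy).} We need $\int_\Omega\Phi(u^0_\epsilon)\,dx$ bounded independently of $\epsilon$. Since $\delta=1$ is fixed, $\Phi$ does not depend on $\epsilon$. It is continuous on $[0,1]$:
\begin{itemize}
\item for $m=1$ it is the bounded logistic entropy;
\item for $1<m<2$, $\vp$ is integrable near $0$ and has only a logarithmic singularity at $1$, so $\Phi$ is finite;
\item for $m\ge2$, $\vp$ is bounded near $0$ and again logarithmic at $1$.
\end{itemize}
In each case $\int_0^u\vp$ is finite, so $\Phi$ is bounded by some $C_\Phi$ on $[0,1]$. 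Because $u^0_\epsilon=\mathbbm 1_{\widetilde\Omega}$ takes only the values $0,1$, we get $\int_\Omega\Phi(u^0_\epsilon)\,dx\le \max(\Phi(0),\Phi(1))|\Omega|\le C_\Phi$. Combined with the hypothesis $\sup_\epsilon\mathscr J_\epsilon(u^0_\epsilon)<\infty$, this yields the claim with $C=C_\Phi+\sup_\epsilon\mathscr J_\epsilon(u^0_\epsilon)$.

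\textbf{Main obstacle.} The only delicate point is justifying that $\Phi$ is finite at the endpoints $0$ and $1$, i.e.\ that $\int_0^1\vp(w)\,dw$ converges despite the singularity of $\vp$ at $w=1$. This holds because $\int_0^u \ln(1-w)\,dw$ is integrable. A secondary point is checking that the energy inequality holds for a.e.\ $t$ with exactly this energy functional and with the initial term $\cae(u^0_\epsilon)$. This follows directly from Definition~\ref{weaksolution:D}, where the nonlinear term is interpreted with the convention that it vanishes on $\{u=0\}\cup\{u=1\}$.
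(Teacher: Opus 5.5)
Your proof is correct and follows the paper's argument. You split $\cae$ into $\int_\Omega\Phi(u_\epsilon)\,dx+\mathscr J_\epsilon(u_\epsilon)$, drop the dissipation in the energy inequality, and bound $\int_\Omega\Phi(u_\epsilon^0)\,dx$ by using that $\Phi$ is bounded on $\{0,1\}$ together with the hypothesis $\sup_\epsilon\mathscr J_\epsilon(u_\epsilon^0)<\infty$. The differences are only cosmetic: you use $\Phi\ge0$ directly (valid by the choice of $a_\delta$) where the paper adds a constant $C_m$ with $\Phi+C_m\ge0$, and you write out the nonnegativity $u_\epsilon(1-v_\epsilon)\ge0$ and the finiteness of $\Phi$ at the endpoints, which the paper leaves implicit.
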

\begin{proof}
For $\delta=1$ and $\chi=\epsilon^{-1}$, we have
\[
\cae_\epsilon(u_\epsilon)=\int_\Omega\Phi(u_\epsilon)dx+\mathscr J_\epsilon(u_\epsilon).
\]
Choose $C_m$ so that $\Phi+C_m\ge0$ on $[0,1]$. The energy inequality corresponding to \eqref{def:energy} yields
\[
\mathscr J_\epsilon(u_\epsilon(t))\le\cae_\epsilon(u_\epsilon(t))+C_m|\Omega|
\le\cae_\epsilon(u_\epsilon^0)+C_m|\Omega|\le C,
\]
where we use the assumption on $\mathscr J_\epsilon(u_\epsilon^0)$ and the boundedness of $\Phi$ on $\{0,1\}$.
\end{proof}

For a.e. $t>0$, thanks to the Neumann boundary conditions, we can express \eqref{J} as
\beq\label{energy1}\begin{aligned}
\mathscr J_\epsilon(u_\epsilon)
&=\frac1{2\epsilon}\int_\Omega\big[u_\epsilon(1-u_\epsilon)+(u_\epsilon-v_\epsilon)^2\big]dx
+\frac\epsilon2\int_\Omega|\nabla v_\epsilon|^2dx\\
&=\frac1{2\epsilon}\int_\Omega\big[(1-u_\epsilon)v_\epsilon^2+u_\epsilon(1-v_\epsilon)^2\big]dx
+\frac\epsilon2\int_\Omega|\nabla v_\epsilon|^2dx.
\end{aligned}\eeq

As a consequence, we can establish the following result.

\begin{lemma}\label{proposition6}
Under the assumptions of Theorem~\ref{Theorem10}, we have
\beq\label{uniform_u}
\|u_\epsilon(t)-u_\epsilon(s)\|_{H^{-1}(\Omega)}
\le C\sqrt{t-s},
\qquad 0\le s\le t\le T.
\eeq
\end{lemma}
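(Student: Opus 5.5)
The plan is to read the $H^{-1}$ time regularity directly off the energy dissipation, in the same way as \eqref{partial_t:u}, but now keeping track of the singular coefficient $\chi=\epsilon^{-1}$.

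\emph{Step 1: a uniform bound on the flux.} Write the first equation of \eqref{eq:KSDC11} in divergence form,
\[
\partial_t u_\epsilon=\nabla\cdot J_\epsilon,\qquad J_\epsilon:=\nabla\frac{u_\epsilon^m}{m}-\epsilon^{-1}u_\epsilon(1-u_\epsilon)\nabla v_\epsilon ,
\]
with $J_\epsilon\cdot\vec n=0$ on $\partial\Omega$. The energy inequality in Definition~\ref{weaksolution:D} (with $\delta=1$, $\chi=\epsilon^{-1}$) gives
\[
\int_0^T\!\!\int_\Omega\frac{|J_\epsilon|^2}{u_\epsilon(1-u_\epsilon)}\,dx\,dt\le \cae_\epsilon(u_\epsilon^0)-\cae_\epsilon(u_\epsilon(T)).
\]
Since $u_\epsilon(1-u_\epsilon)\le\frac14$, this yields $\|J_\epsilon\|_{L^2(Q_T)}^2\le\frac14\big(\cae_\epsilon(u_\epsilon^0)-\cae_\epsilon(u_\epsilon(T))\big)$. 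The right-hand side is bounded independently of $\epsilon$ for three reasons.
\begin{itemize}
\item $\cae_\epsilon(u^0_\epsilon)=\int_\Omega\Phi(\mathbbm{1}_{\widetilde\Omega})\,dx+\mathscr J_\epsilon(u_\epsilon^0)$ is bounded by the assumption of Theorem~\ref{Theorem10}, since $\Phi$ is finite at $0$ and $1$.
\item $\mathscr J_\epsilon\ge0$, by \eqref{energy1}.
\item $\Phi$ is bounded below on $[0,1]$.
\end{itemize}
This is the one point that needs care: the prefactor $\epsilon^{-1}$ sits inside $J_\epsilon$, and the energy identity absorbs it exactly, which is why no $\epsilon$ appears in the bound. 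The remaining bookkeeping is simply the lower bound on $\cae_\epsilon(u_\epsilon(T))$, which Lemma~\ref{boundedness_J} already handles.

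\emph{Step 2: duality.} Take $\psi\in H^1(\Omega)$ and $0\le s\le t\le T$. Using $\psi$, in time-localized form, as a test function in the weak formulation \eqref{def_weak1} gives
\[
\int_\Omega\big(u_\epsilon(t)-u_\epsilon(s)\big)\psi\,dx=-\int_s^t\!\!\int_\Omega J_\epsilon\cdot\nabla\psi\,dx\,d\tau .
\]
Justifying this identity for a.e.\ $s,t$ is a standard argument with a time cutoff $\phi(\tau,x)=\theta_k(\tau)\psi(x)$, where $\theta_k$ approximates $\mathbbm{1}_{[s,t]}$; it uses $u_\epsilon\in C([0,T];H^{-1})$, which follows from $\partial_tu_\epsilon\in L^2(0,T;H^{-1})$. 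By Cauchy--Schwarz in space and time,
\[
\Big|\int_\Omega\big(u_\epsilon(t)-u_\epsilon(s)\big)\psi\,dx\Big|\le\sqrt{t-s}\,\|J_\epsilon\|_{L^2(Q_T)}\,\|\nabla\psi\|_{L^2(\Omega)}\le C\sqrt{t-s}\,\|\psi\|_{H^1(\Omega)}.
\]
Taking the supremum over $\|\psi\|_{H^1}\le1$ gives \eqref{uniform_u}. By time continuity in $H^{-1}$ the bound extends from a.e.\ $s,t$ to all $0\le s\le t\le T$, including $s=0$, where $u_\epsilon(0)=\mathbbm{1}_{\widetilde\Omega}$.
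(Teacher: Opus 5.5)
Your proposal is correct and follows essentially the same route as the paper. Both arguments bound the flux $\nabla\frac{u_\epsilon^m}{m}-\epsilon^{-1}u_\epsilon(1-u_\epsilon)\nabla v_\epsilon$ uniformly in $L^2(Q_T)$ via the energy dissipation \eqref{energybound}, test the weak formulation with $\psi\in H^1(\Omega)$ on $[s,t]$, and apply Cauchy--Schwarz, with $u_\epsilon\in C([0,T];H^{-1}(\Omega))$ giving meaning to the pointwise-in-time values. Your Step 1 simply makes explicit what the paper leaves implicit, namely that $\cae_\epsilon(u_\epsilon^0)$ is bounded uniformly in $\epsilon$ thanks to the assumption $\sup_\epsilon\mathscr J_\epsilon(u_\epsilon^0)<\infty$.
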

\begin{proof}
Since $0\le u_\epsilon\le1$ and, by the equation and
\eqref{energybound},
\[
\partial_tu_\epsilon\in L^2(0,T;H^{-1}(\Omega)),
\]
we have
\[
u_\epsilon\in W^{1,2}(0,T;H^{-1}(\Omega))
\subset C([0,T];H^{-1}(\Omega)).
\]
Hence $u_\epsilon(t)$ is well-defined as an element of $H^{-1}(\Omega)$ for every
$t\in[0,T]$.
For a given test function $\psi\in H^1(\Omega)$, the continuity equation
\eqref{def_weak1} implies
\beq
\int_\Omega u_\epsilon(x,t)\psi(x)\,dx
-\int_\Omega u_\epsilon(x,s)\psi(x)\,dx
=
-\int_s^t\int_\Omega
\left(
\nabla\frac{u_\epsilon^m}{m}
-\epsilon^{-1}u_\epsilon(1-u_\epsilon)\nabla v_\epsilon
\right)\cdot\nabla\psi\,dx\,d\tau.
\eeq
Thus,
\beq
\begin{aligned}
&\left|
\int_\Omega
(u_\epsilon(x,t)-u_\epsilon(x,s))\psi(x)\,dx
\right|
\\
&\quad\le
\left(
\int_s^t\int_\Omega
\left|
\nabla\frac{u_\epsilon^m}{m}
-\epsilon^{-1}u_\epsilon(1-u_\epsilon)\nabla v_\epsilon
\right|^2
\,dx\,d\tau
\right)^{1/2}
\left(
\int_s^t\int_\Omega|\nabla\psi|^2\,dx\,d\tau
\right)^{1/2}
\\
&\quad\le
C\|\psi\|_{H^1(\Omega)}(t-s)^{1/2},
\end{aligned}
\eeq
where the last inequality follows from \eqref{energybound}.
This implies \eqref{uniform_u}.
\end{proof}

Now we are ready to prove Theorem~\ref{Theorem10}.

\vspace{2mm}
\noindent{\bf Proof of Theorem \ref{Theorem10}.} With the above lemmas, we can prove our main result.

\vspace{2mm}
\noindent{\bf Step 1 ($BV$ bound).} As in \cite{kim2024density}, we introduce the function
\[
F(v)=\int_0^v 4\min\{\sigma, 1-\sigma\}d\sigma=
\bepa\begin{aligned}
    &2v^2,\qquad &0\le v\le \frac{1}{2},\\
    &4v-2v^2-1,\qquad &\frac{1}{2}\le v\le 1.
\end{aligned}\eepa
\]
We begin by proving the $BV$ bound on $F(v_\epsilon)$.
For any $\epsilon>0$, we have
\[\begin{aligned}
|\nabla F(v_\epsilon)|\le 4|\nabla v_\epsilon|\min\{v_\epsilon, 1-v_\epsilon\}
&\le2\epsilon^{-1}\min\{v_\epsilon^2, (1-v_\epsilon)^2\}+2\epsilon |\nabla v_\epsilon|^2\\
&\le 2\epsilon^{-1}[(1-u_\epsilon)v_\epsilon^2+u_\epsilon(1-v_\epsilon)^2]+2\epsilon |\nabla v_\epsilon|^2,
\end{aligned}\]
which, together with \eqref{energy1} and Lemma \ref{boundedness_J}, gives, for a.e. $t\ge0$,
\beq\label{def:psi}
\int_\Omega|\nabla F(v_\epsilon(t))|dx\le4\mathscr J_\epsilon(u_\epsilon(t))\le C.
\eeq
This implies that $F(v_\epsilon)$ is uniformly bounded in $L^\infty((0,T);BV(\Omega))$, namely,
\beq\label{L^infy}
F(v_{\epsilon})\in L^\infty((0,T);BV(\Omega)).
\eeq

\noindent{\bf Step 2 (Convergence).} Next, we prove that
\beq\label{85}
F(v_{\epsilon})\to u
\qquad\text{strongly in }L^{\infty}((0,T);L^2(\Omega)),
\eeq
and
\beq\label{77}
u_{\epsilon},v_{\epsilon}\to u
\qquad\text{strongly in }L^{\infty}((0,T);L^1(\Omega)).
\eeq

We use the decomposition
\[
F(v_\epsilon)-u_\epsilon
=\big(F(v_\epsilon)-F(u_\epsilon)\big)
+\big(F(u_\epsilon)-u_\epsilon\big).
\]
Since $F$ is Lipschitz, by \eqref{energy1} we have
\beq\label{4.5}
\operatorname*{ess\,sup}_{0<t<T}
\|F(v_\epsilon)-F(u_\epsilon)\|_{L^2(\Omega)}^2
\le C\operatorname*{ess\,sup}_{0<t<T}
\|v_\epsilon-u_\epsilon\|_{L^2(\Omega)}^2
\le C\epsilon.
\eeq
Moreover, by direct calculation,
\[
|F(s)-s|=
\begin{cases}
s(1-2s),&0\le s\le\frac12,\\
(1-s)(2s-1),&\frac12\le s\le1,
\end{cases}
\le s(1-s).
\]
Hence, we obtain
\beq\label{80}
\begin{aligned}
{\operatorname*{ess\,sup}_{0<t<T}}
\|F(u_\epsilon)-u_\epsilon\|_{L^2(\Omega)}^2
\le C{\operatorname*{ess\,sup}_{0<t<T}}
\int_\Omega u_\epsilon^2(1-u_\epsilon)^2\,dx\le C{\operatorname*{ess\,sup}_{0<t<T}}
\int_\Omega u_\epsilon(1-u_\epsilon)\,dx
\le C\epsilon.
\end{aligned}
\eeq
Therefore, from \eqref{4.5} and \eqref{80}, we conclude
\beq\label{46}
{\operatorname*{ess\,sup}_{0<t<T}}
\|F(v_\epsilon)-u_\epsilon\|_{L^2(\Omega)}
\le C\sqrt{\epsilon}.
\eeq

Lemma \ref{proposition6} implies that $u_\epsilon$ is equicontinuous in $H^{-1}(\Omega)$. Since $0\le u_\epsilon\le1$, it is uniformly bounded in $L^2(\Omega)$, and the compact embedding $L^2(\Omega)\Subset H^{-1}(\Omega)$ gives pointwise relative compactness. Hence, by the Arzel\`a--Ascoli theorem, up to a subsequence,
\[
u_\epsilon\to u
\qquad\text{in }C([0,T];H^{-1}(\Omega)).
\]
Together with \eqref{46} and the continuous embedding $L^2(\Omega)\hookrightarrow H^{-1}(\Omega)$, this yields
\[
F(v_\epsilon)\to u
\qquad\text{strongly in }L^\infty((0,T);H^{-1}(\Omega)).
\]

Using \eqref{L^infy}, the compact embedding of $BV(\Omega)$ into
$L^1(\Omega)$, and the compactness lemma in the appendix, we obtain
\[
F(v_\epsilon)\to u
\qquad\text{strongly in }
L^\infty((0,T);L^1(\Omega)).
\]
Since $0\le F(v_\epsilon),u\le1$,
\[
\|F(v_\epsilon)-u\|_{L^2(\Omega)}^2
\le
\|F(v_\epsilon)-u\|_{L^1(\Omega)},
\]
and therefore
\[
F(v_\epsilon)\to u
\qquad\text{strongly in }
L^\infty((0,T);L^2(\Omega)),
\]
which proves \eqref{85}.

Combining \eqref{46} and \eqref{85}, we obtain
\[
u_\epsilon\to u
\qquad\text{strongly in }
L^\infty((0,T);L^2(\Omega)),
\]
and hence also in
$L^\infty((0,T);L^1(\Omega))$.

Finally, from \eqref{energy1} and Lemma \ref{boundedness_J},
\beq\label{73}
{\operatorname*{ess\,sup}_{0<t<T}}
\|v_\epsilon-u_\epsilon\|_{L^2(\Omega)}^2
\le C\epsilon.
\eeq
Thus,
\[
v_\epsilon\to u
\qquad\text{strongly in }
L^\infty((0,T);L^1(\Omega)),
\]
which proves \eqref{77}.

Moreover, by the lower semicontinuity of the $BV$ seminorm and
\eqref{L^infy}, we have
\[
u\in L^\infty((0,T);BV(\Omega)).
\]
On the other hand, \eqref{energy1} gives
\[
{\operatorname*{ess\,sup}_{0<t<T}}
\int_\Omega u_\epsilon(1-u_\epsilon)\,dx
\le C\epsilon.
\]
Since $u_\epsilon\to u$ strongly in
$L^\infty((0,T);L^1(\Omega))$ and $0\le u_\epsilon,u\le1$, we obtain
\[
u_\epsilon(1-u_\epsilon)\to u(1-u)
\qquad\text{strongly in }
L^\infty((0,T);L^1(\Omega)).
\]
Consequently,
\[
u(1-u)=0\qquad\text{a.e. in }Q_T,
\]
and hence
\[
u(t,x)\in\{0,1\}
\qquad\text{for a.e. }(t,x)\in Q_T.
\]
\vspace{2mm}
\noindent{\bf Step 3 (Liminf property).} To obtain \eqref{gr}, combining \eqref{77} and \eqref{73}, since $F$ is Lipschitz and $F(u(t))=u(t)$, we can calculate, for a.e. $t\in(0,T)$,
\beq\label{4.6}
\begin{aligned}
\|F(v_{\epsilon}(t))-F(u(t))\|_{L^1(\Omega)}
&\le C\|v_{\epsilon}(t)-u(t)\|_{L^1(\Omega)}\\
&\le C\|v_{\epsilon}(t)-u_\epsilon(t)\|_{L^1(\Omega)}
+C\|u_\epsilon(t)-u(t)\|_{L^1(\Omega)}
\to 0.
\end{aligned}
\eeq 
By the lower semicontinuity of the $BV$ norm, we get
\beq
\liminf_{\epsilon\to0}\int_\Omega|\nabla F(v_\epsilon(t))|\,dx
\ge \int_\Omega|Du(t)|.
\eeq
From \eqref{def:psi}, we have
\[
\liminf_{\epsilon\to0}\mathscr{J}_\epsilon(u_\epsilon(t))
\ge \frac14\liminf_{\epsilon\to0}\int_\Omega|\nabla F(v_\epsilon(t))|\,dx
\ge \frac14\int_\Omega|Du(t)|.
\]

\noindent\textbf{Step 4 ($u$ only depends on $x$).} We now prove that $u$ is independent of $t$.
From \eqref{energy1} and the singular-energy bound in Lemma~\ref{boundedness_J}, we have
\beq\label{90}
{\operatorname*{ess\,sup}_{t>0}
\|\sqrt{u_\epsilon(t)(1-u_\epsilon(t))}\|_{L^2(\Omega)}^2
\le C\epsilon.}
\eeq
We define
\[
L_\epsilon(t,x):=
\begin{cases}
\displaystyle
\frac{1}{\sqrt{u_\epsilon(1-u_\epsilon)}}
\left(
\nabla\frac{u_\epsilon^m}{m}
-\chi u_\epsilon(1-u_\epsilon)\nabla v_\epsilon
\right),&0<u_\epsilon<1,\\[10pt]
0,&u_\epsilon\in\{0,1\}.
\end{cases}
\]
This definition is consistent since $\nabla u_\epsilon^m=0$ a.e. on the level sets
$\{u_\epsilon=0\}$ and $\{u_\epsilon=1\}$. The energy inequality corresponding to
\eqref{def:energy} then yields
\beq\label{92}
\nabla\frac{u_\epsilon^m}{m}
-\chi u_\epsilon(1-u_\epsilon)\nabla v_\epsilon
=\sqrt{u_\epsilon(1-u_\epsilon)}\,L_\epsilon,
\qquad
\int_0^\infty\!\int_\Omega|L_\epsilon|^2\,dxdt
\le\cae(u_\epsilon^0)\le C.
\eeq
Consequently, after extraction of a subsequence,
\[
L_\epsilon\rightharpoonup L
\quad\text{weakly in }L^2((0,\infty)\times\Omega),
\]
for some $L\in L^2((0,\infty)\times\Omega)$.
Combining \eqref{90} and \eqref{92}, we obtain
\[
\nabla \frac{u_\epsilon^m}{m}
-\chi u_\epsilon(1-u_\epsilon)\nabla v_\epsilon
=
\sqrt{u_\epsilon(1-u_\epsilon)}\,L_\epsilon
\to0
\quad\text{in }L^1((0,T)\times\Omega),
\]
for every $T>0$.

Therefore, we have
\beq
\dv\left(
\nabla \frac{u_\epsilon^m}{m}
-\chi u_\epsilon(1-u_\epsilon)\nabla v_\epsilon
\right)
\to0,
\quad\text{as }\epsilon\to0,
\eeq
in the sense of distributions. As a consequence,
$\partial_tu_\epsilon\to0$ in the sense of distributions and thus
$u$ does not depend on $t$. Moreover, \eqref{46}, the convergence in $C([0,T];H^{-1}(\Omega))$, and $u_\epsilon(0)=\mathbbm{1}_{\widetilde\Omega}$ imply $u(0)=\mathbbm{1}_{\widetilde\Omega}$. Hence $u=\mathbbm{1}_{\widetilde\Omega}$ a.e. in $\Omega$.

The proof of Theorem \ref{Theorem10} is complete.$\hfill\square$


\begin{remark}
The $\Gamma$-convergence result identifies the sharp interfacial energy associated with $\mathscr J_\epsilon$. It is proved in \cite[Theorem~1.5]{kim2024density} that $\mathscr J_\epsilon$ $\Gamma$-converges in $L^1(\Omega)$ to
\[
\mathscr J_0(u)=
\begin{cases}
\displaystyle\frac14\int_\Omega|Du|,&u\in BV(\Omega;\{0,1\}),\\[5pt]
+\infty,&\text{otherwise}.
\end{cases}
\]
In particular, if $u\in BV(\Omega;\{0,1\})$, there exists a recovery sequence $u_\epsilon\to u$ in $L^1(\Omega)$ such that
\[
\limsup_{\epsilon\to0}\mathscr J_\epsilon(u_\epsilon)
\le\frac14\int_\Omega|Du|.
\]
The recovery-sequence construction reduces to the characteristic-function case established in \cite[Proposition 5.3]{mellet2022gammaconvergencenonlocalperimetersbounded}.
\end{remark}

\section{\texorpdfstring{{Numerical simulations}}{Numerical simulations}}

\begin{figure}[ht]
\centering
\subfigure[$D=1$, $t=0$]{
\includegraphics[width=2.5cm,height=3cm]{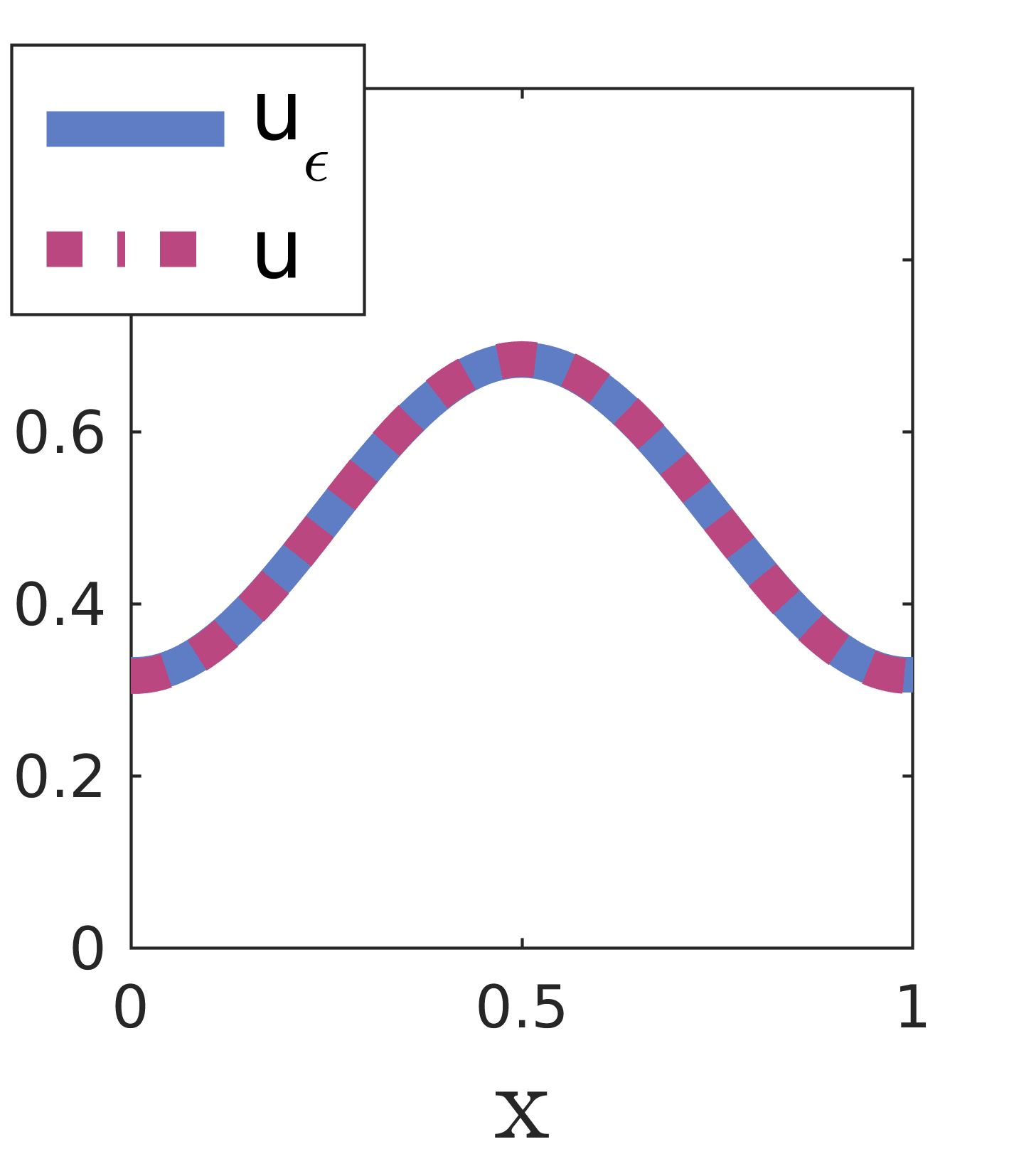}
}\hspace{0.5cm}\subfigure[$D=1$, $t=5$]{
\includegraphics[width=2.5cm,height=3cm]{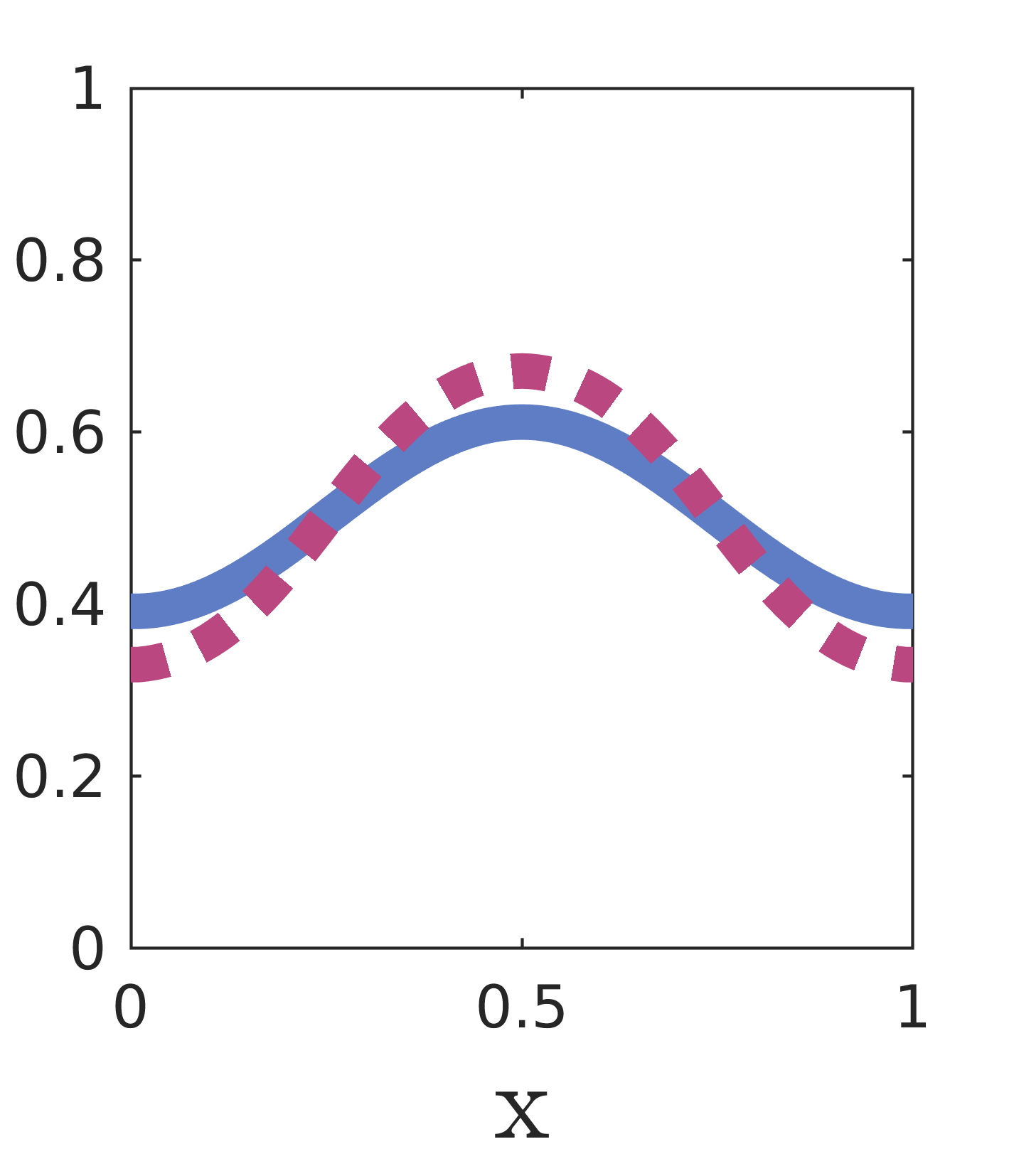}
}\hspace{0.5cm}\subfigure[$D=1$, $t=20$]{
\includegraphics[width=2.5cm,height=3cm]{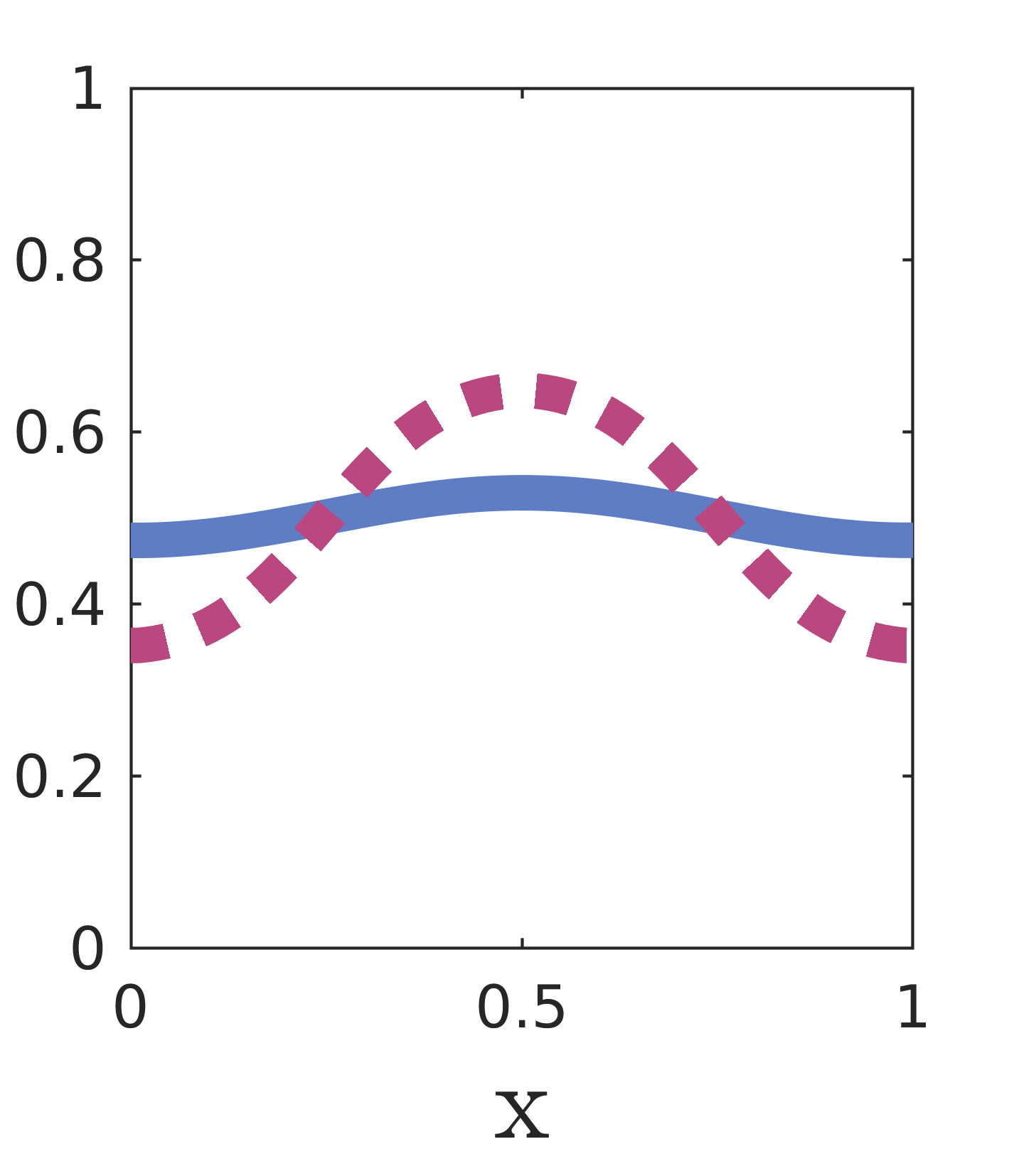}
}\hspace{0.5cm}\subfigure[$D=1$, $t=1000$]{
\includegraphics[width=2.5cm,height=3cm]{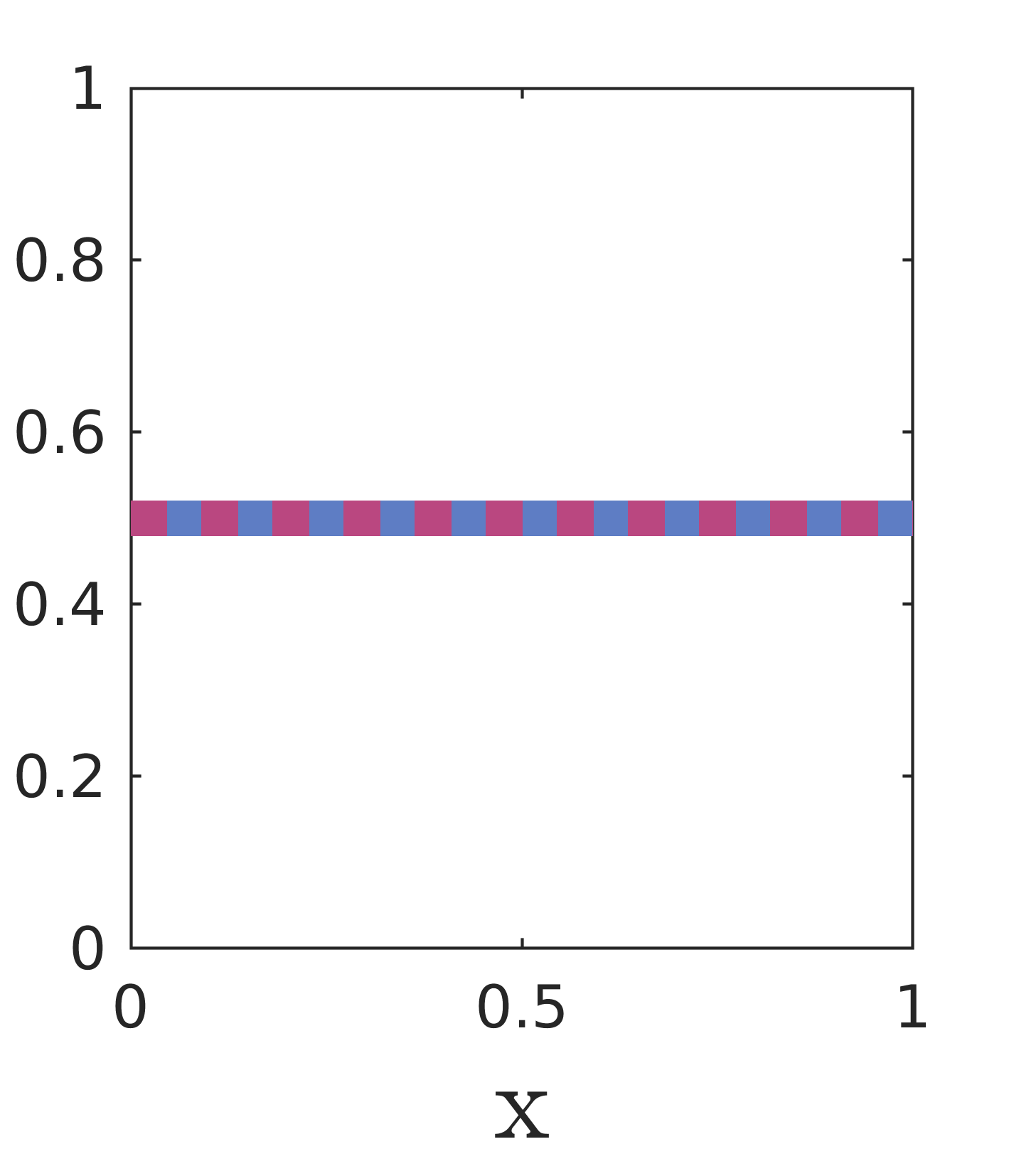}
}\\
\subfigure[$D=0.01$, $t=0$]{
\includegraphics[width=2.5cm,height=3cm]{Figure/u1.png}
}\hspace{0.5cm}\subfigure[$D=0.01$, $t=5$]{
\includegraphics[width=2.5cm,height=3cm]{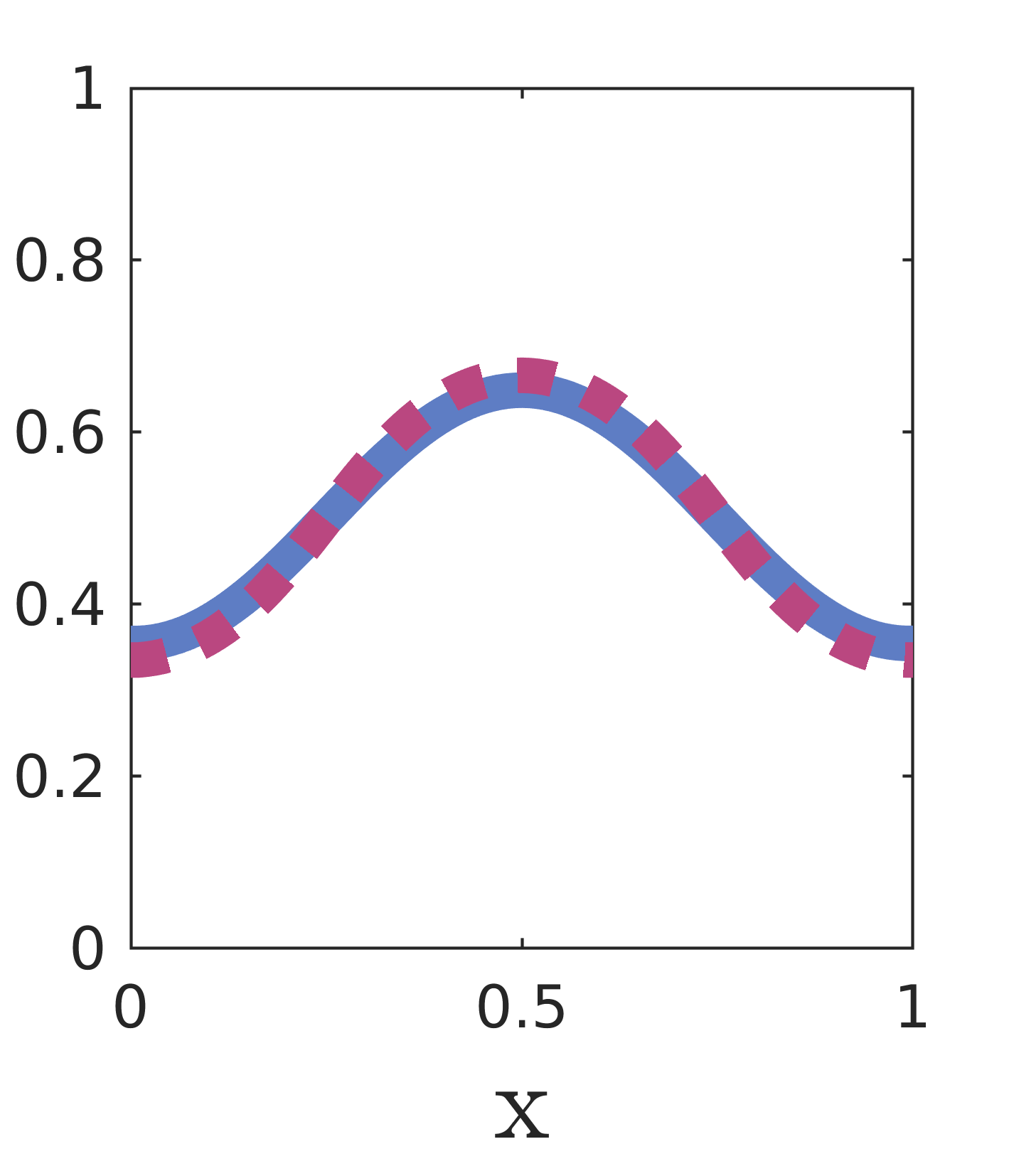}
}\hspace{0.5cm}\subfigure[$D=0.01$, $t=20$]{
\includegraphics[width=2.5cm,height=3cm]{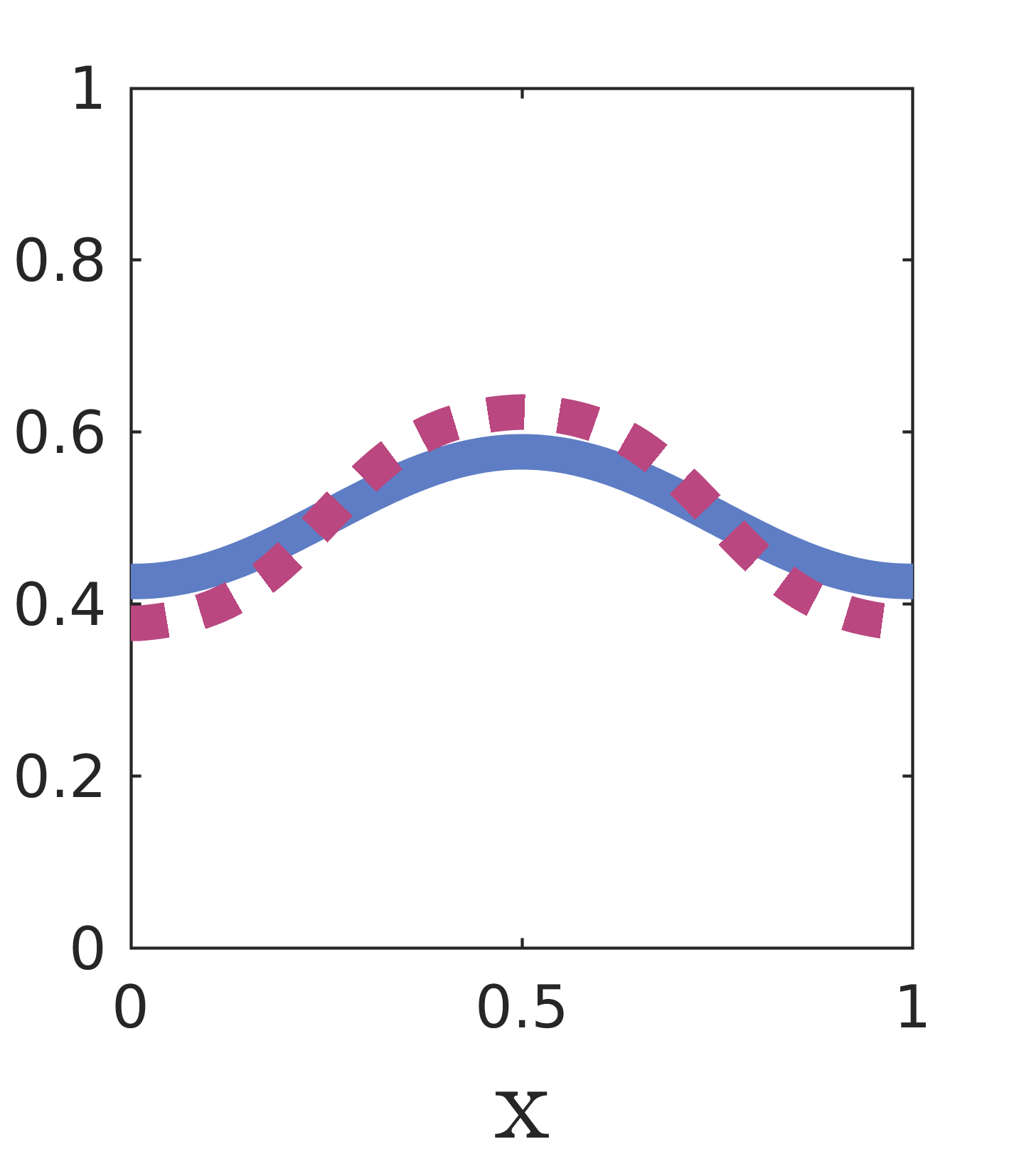}
}\hspace{0.5cm}\subfigure[$D=0.01$, $t=1000$]{
\includegraphics[width=2.5cm,height=3cm]{Figure/u4.png}
}
\\[-8pt]
\caption{Comparison corresponding to Theorem~\ref{hks}: time evolution of $u_\epsilon(t,x)$ (solid line) for \eqref{eq:KSDC} and $u$ (dotted line) for \eqref{baru:equ} in one dimension, with $m=\delta=1$, $\chi=2$, and $M=0.5$.}
\label{fig:label3131}
\end{figure}

\begin{figure}[ht]
\centering
\subfigure[$\delta=100$, $t=0$]{
\includegraphics[width=2.5cm,height=3cm]{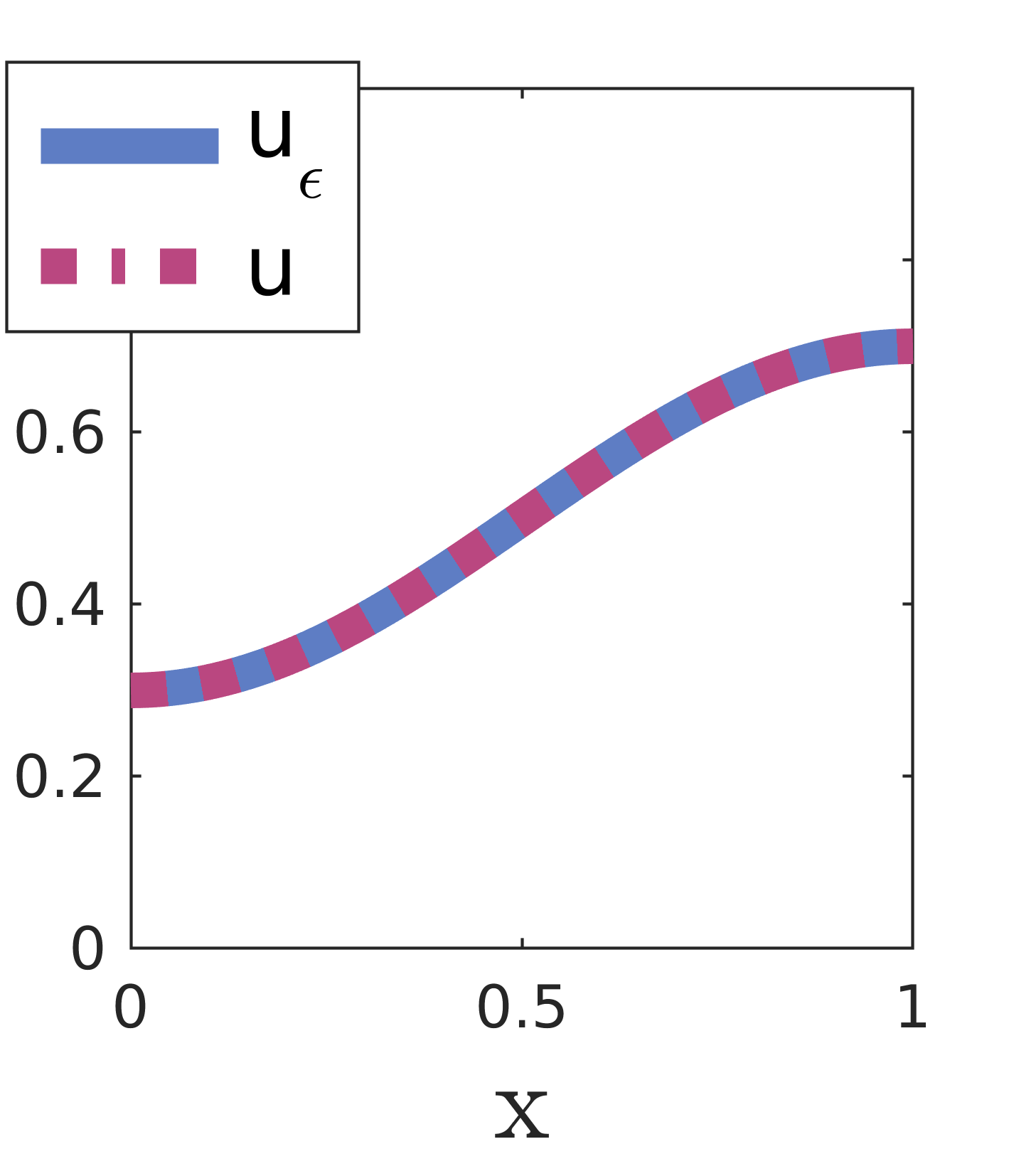}
}\hspace{0.5cm}\subfigure[$\delta=100$, $t=5$]{
\includegraphics[width=2.5cm,height=3cm]{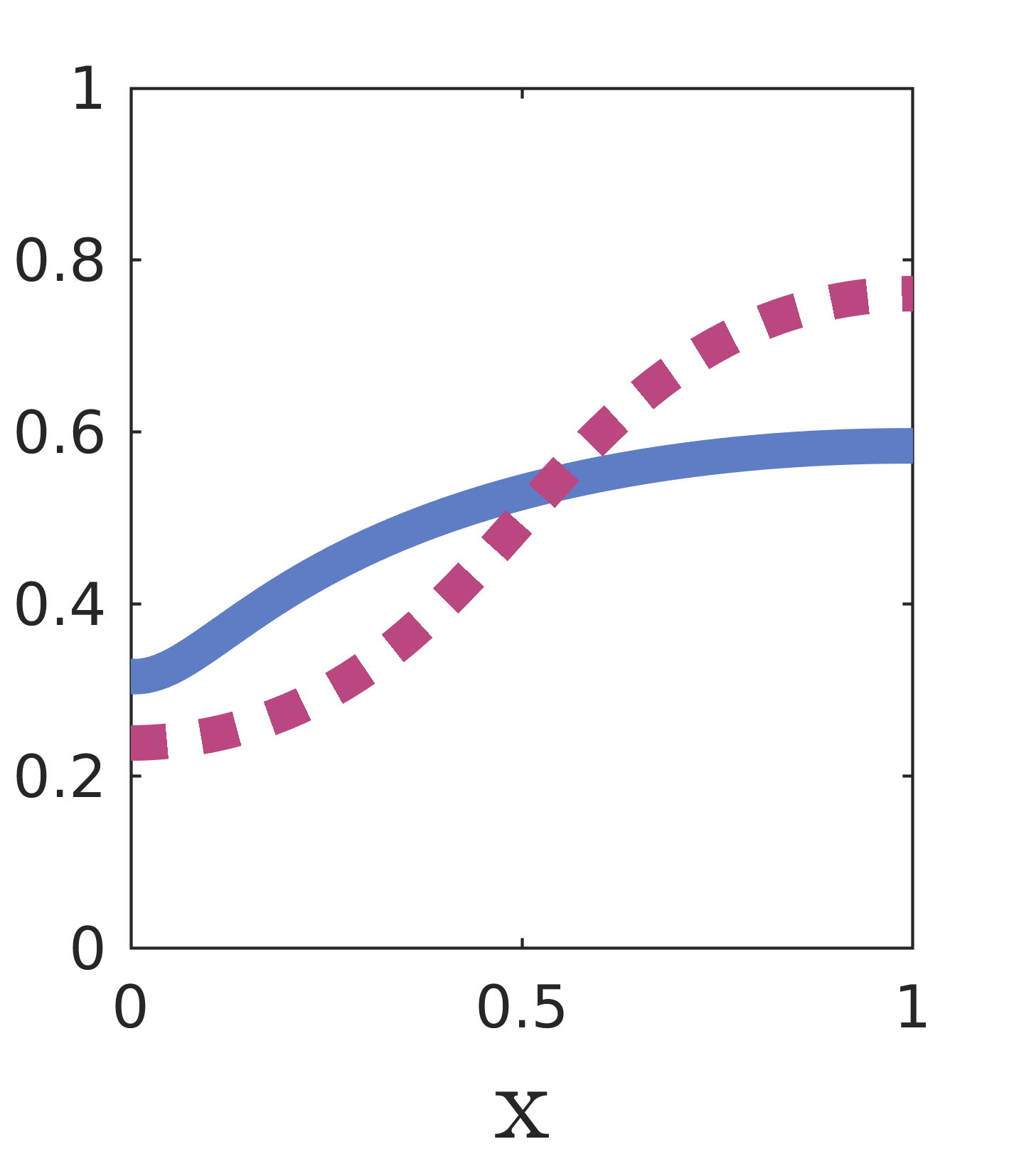}
}\hspace{0.5cm}\subfigure[$\delta=100$, $t=20$]{
\includegraphics[width=2.5cm,height=3cm]{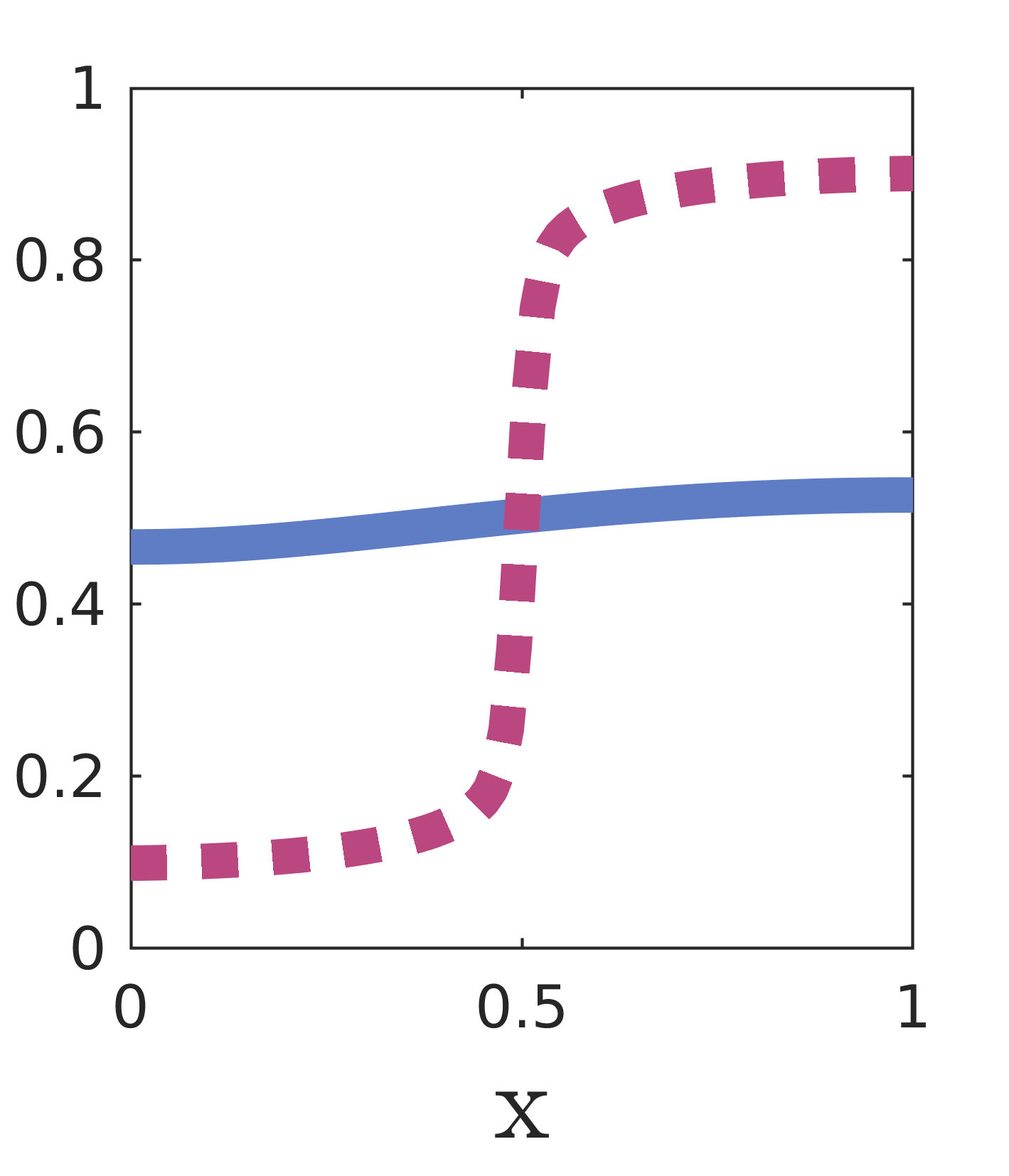}
}\hspace{0.5cm}\subfigure[$\delta=100$, $t=1000$]{
\includegraphics[width=2.5cm,height=3cm]{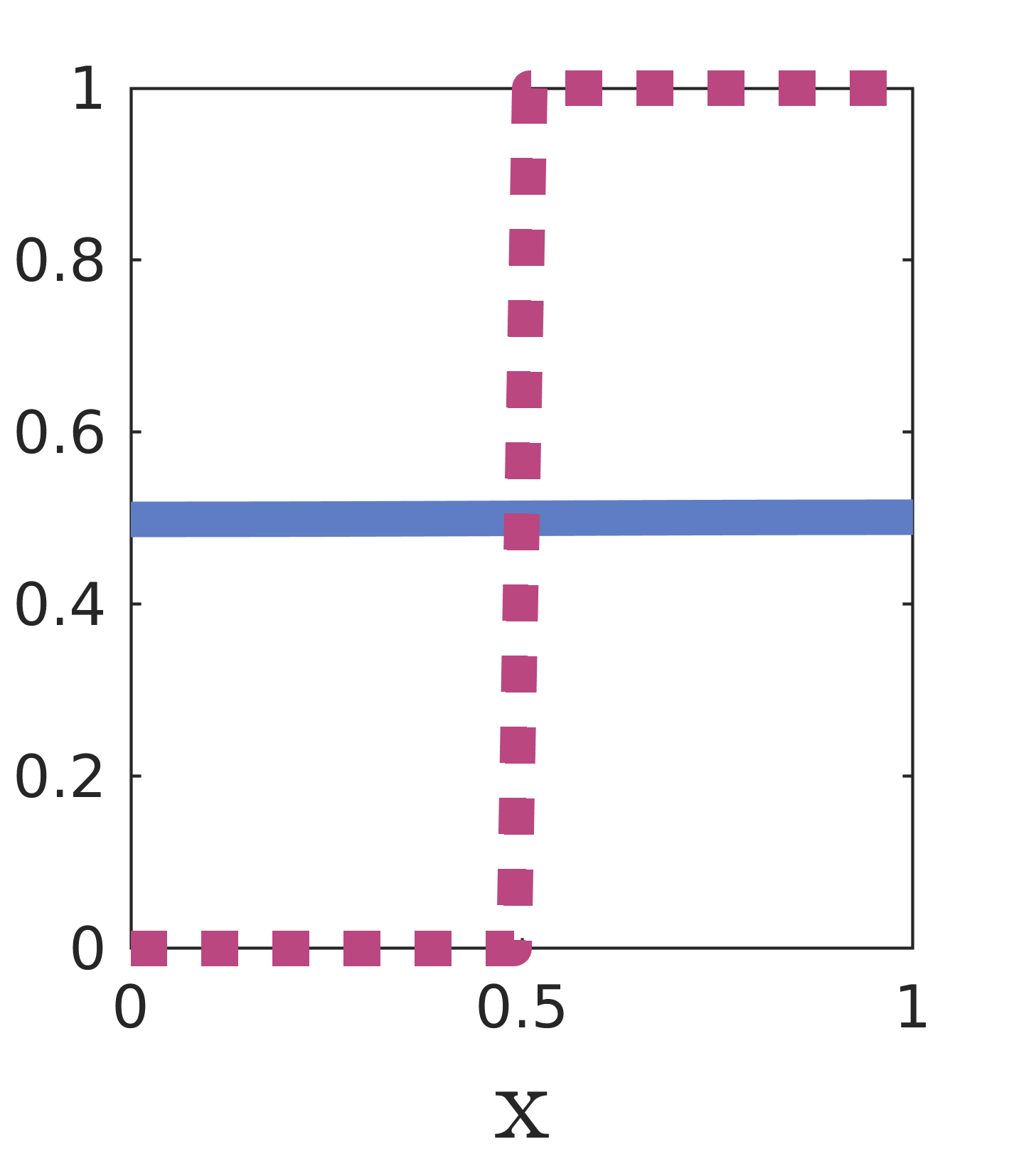}
}\\
\subfigure[$\delta=10$, $t=0$]{
\includegraphics[width=2.5cm,height=3cm]{Figure/case2initial.png}
}\hspace{0.5cm}\subfigure[$\delta=10$, $t=5$]{
\includegraphics[width=2.5cm,height=3cm]{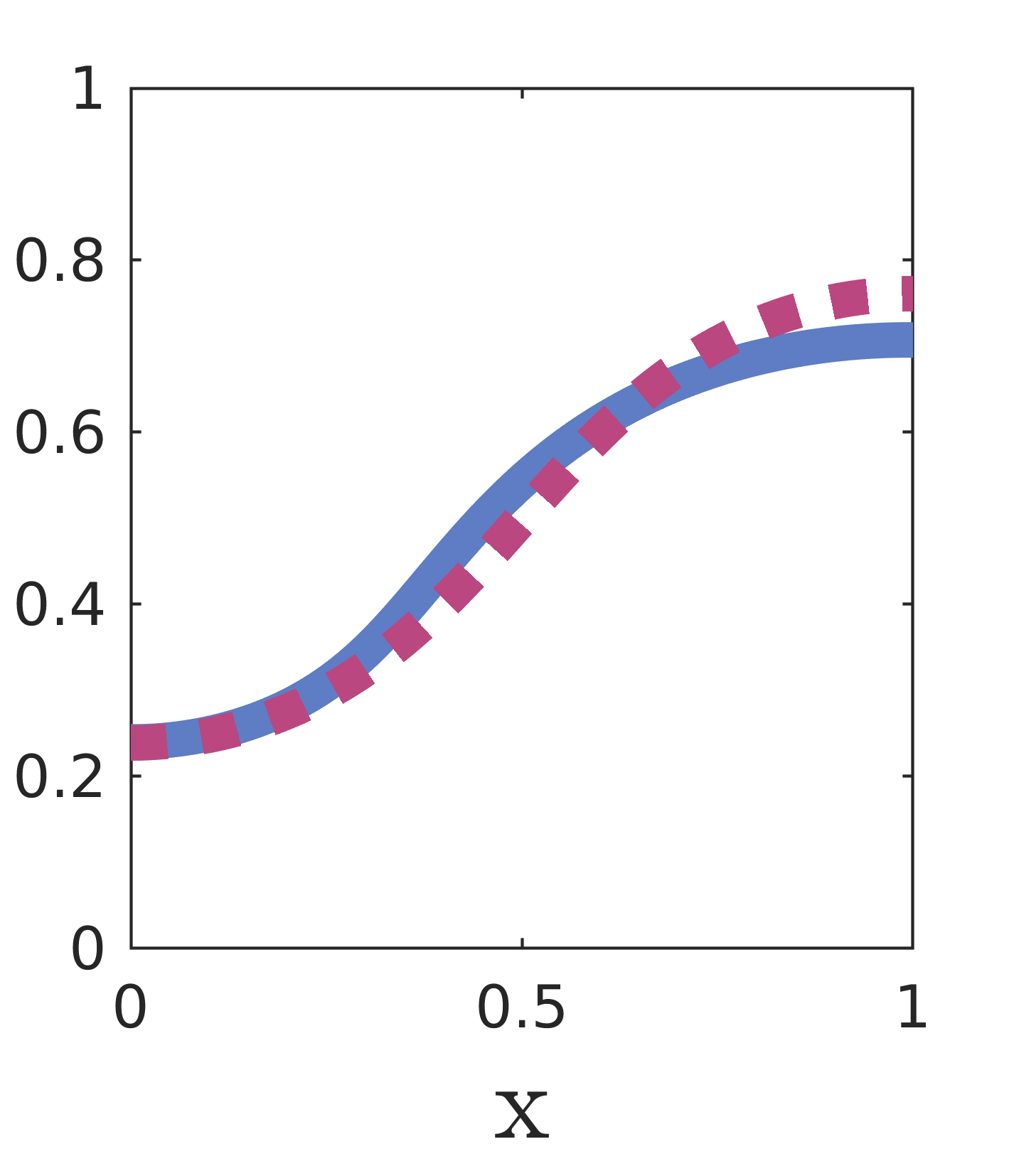}
}\hspace{0.5cm}\subfigure[$\delta=10$, $t=20$]{
\includegraphics[width=2.5cm,height=3cm]{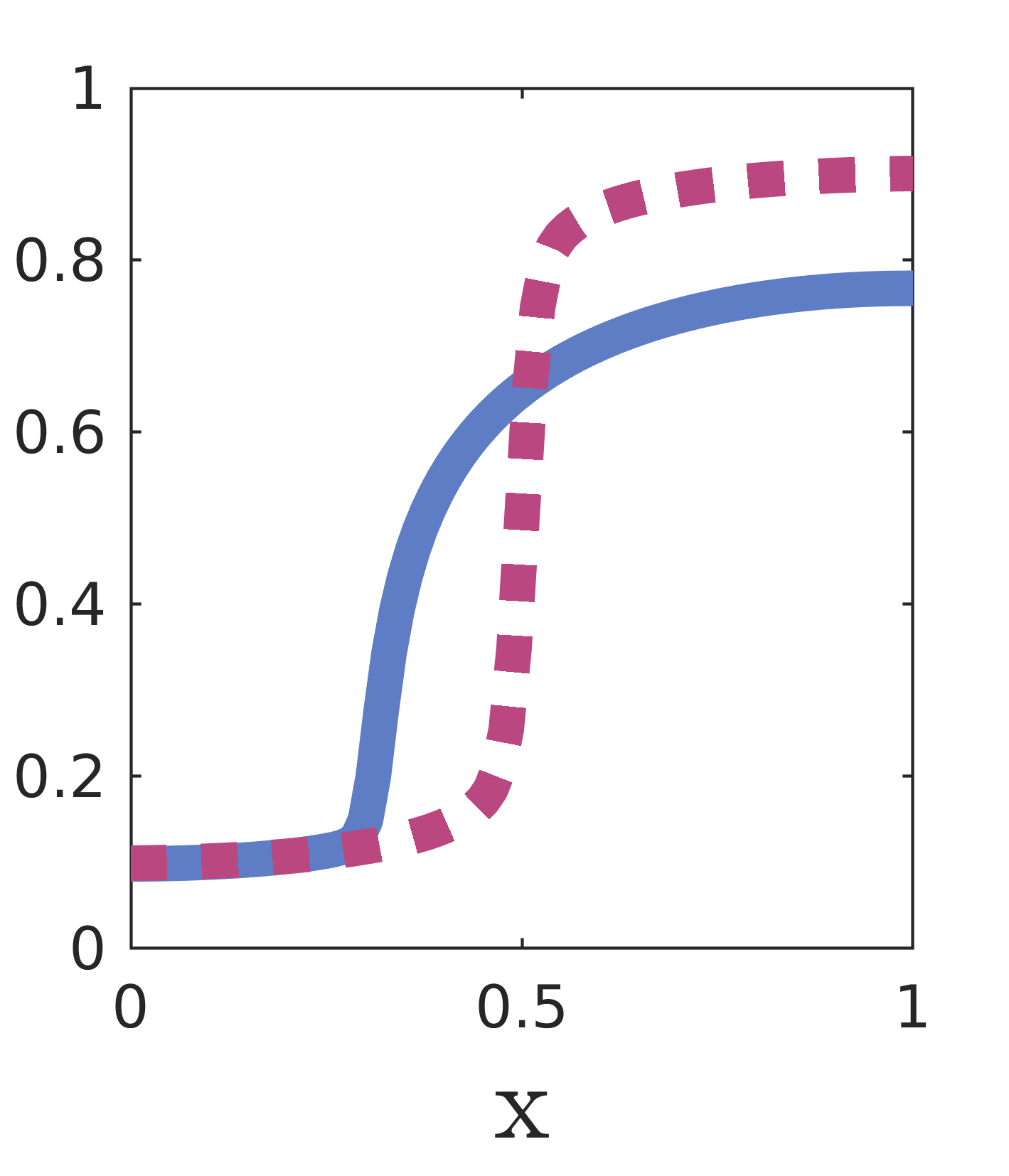}
}\hspace{0.5cm}\subfigure[$\delta=10$, $t=1000$]{
\includegraphics[width=2.5cm,height=3cm]{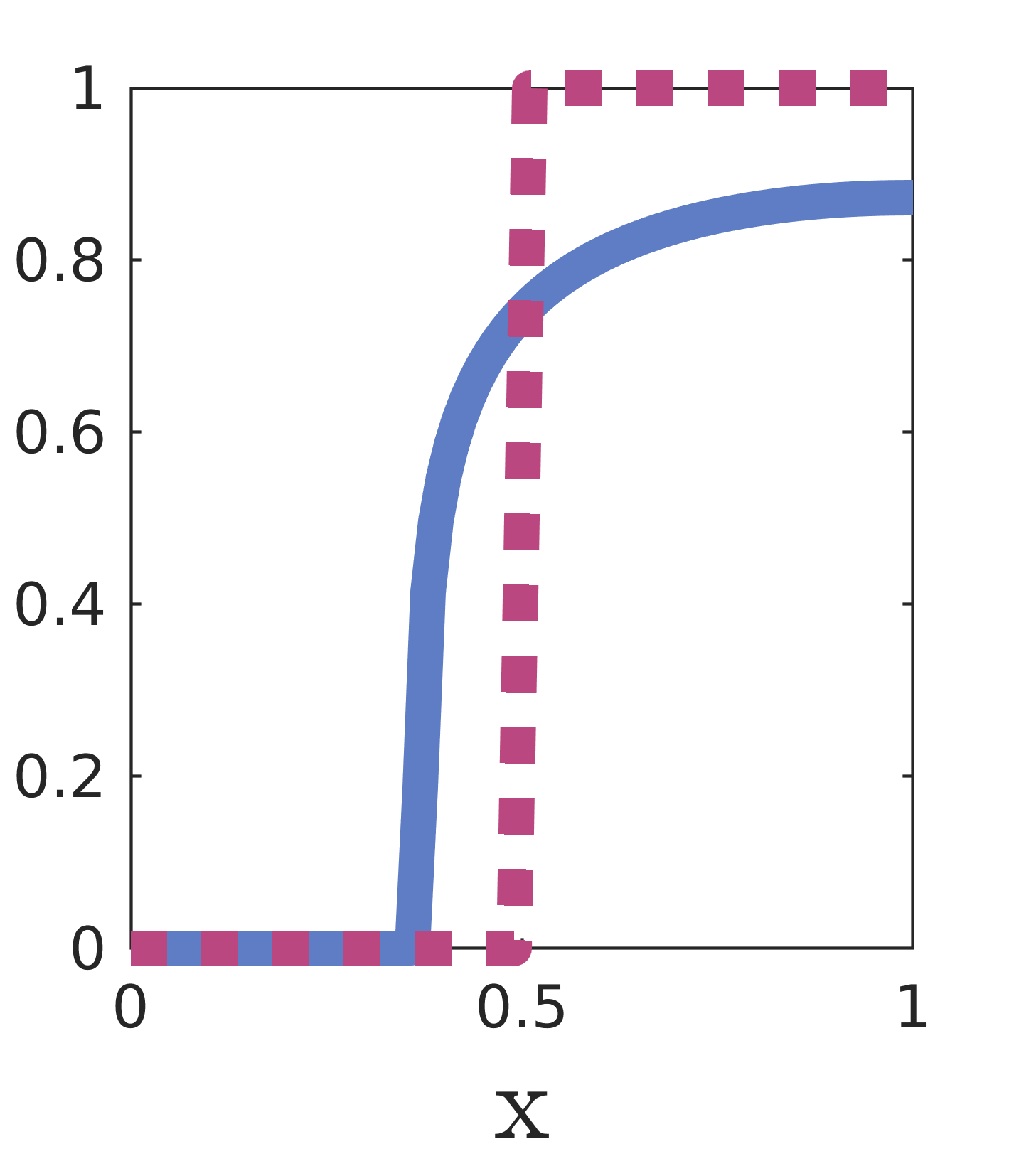}
}\\
\subfigure[$\delta=0.1$, $t=0$]{
\includegraphics[width=2.5cm,height=3cm]{Figure/case2initial.png}
}\hspace{0.5cm}\subfigure[$\delta=0.1$, $t=5$]{
\includegraphics[width=2.5cm,height=3cm]{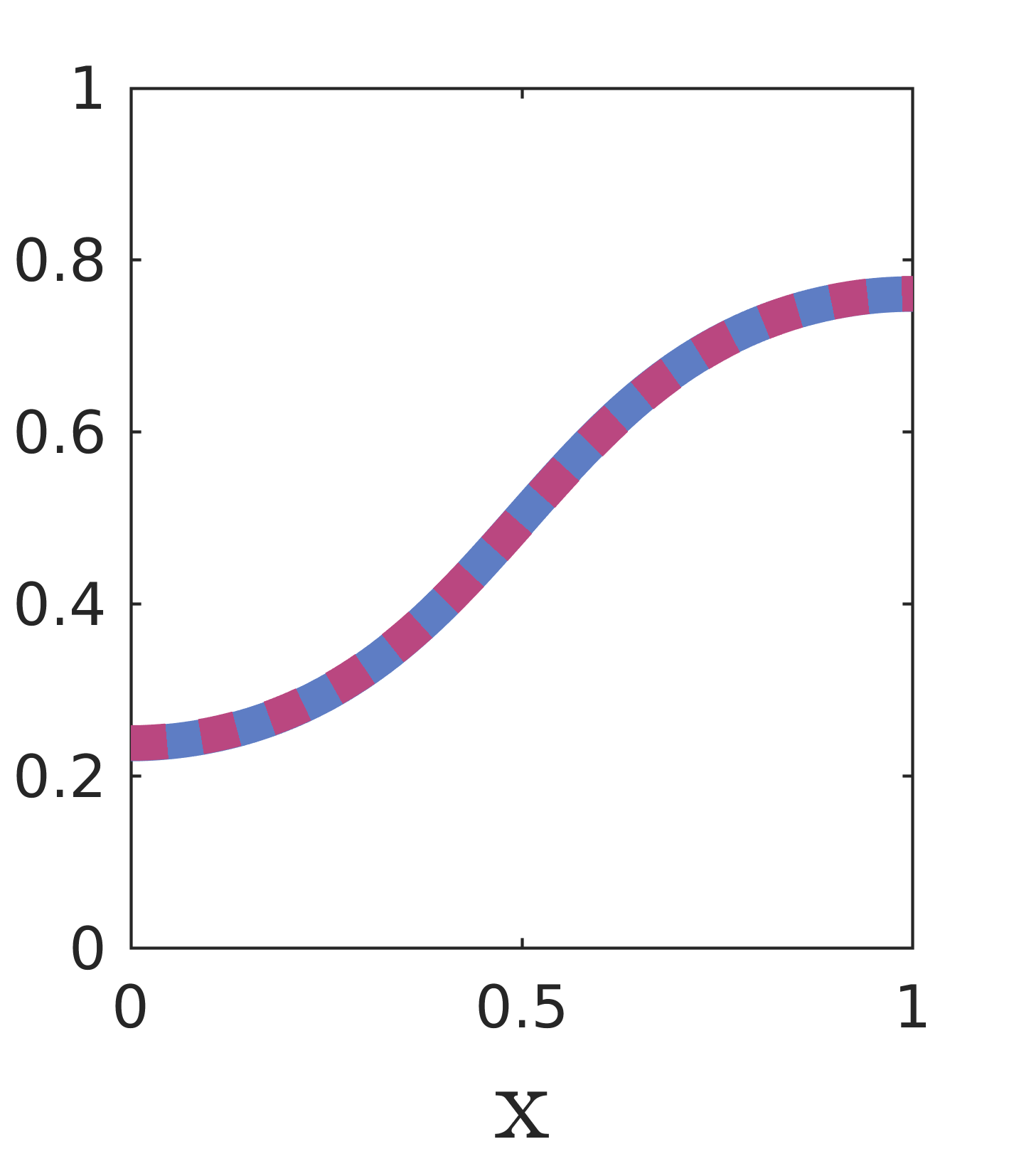}
}\hspace{0.5cm}\subfigure[$\delta=0.1$, $t=20$]{
\includegraphics[width=2.5cm,height=3cm]{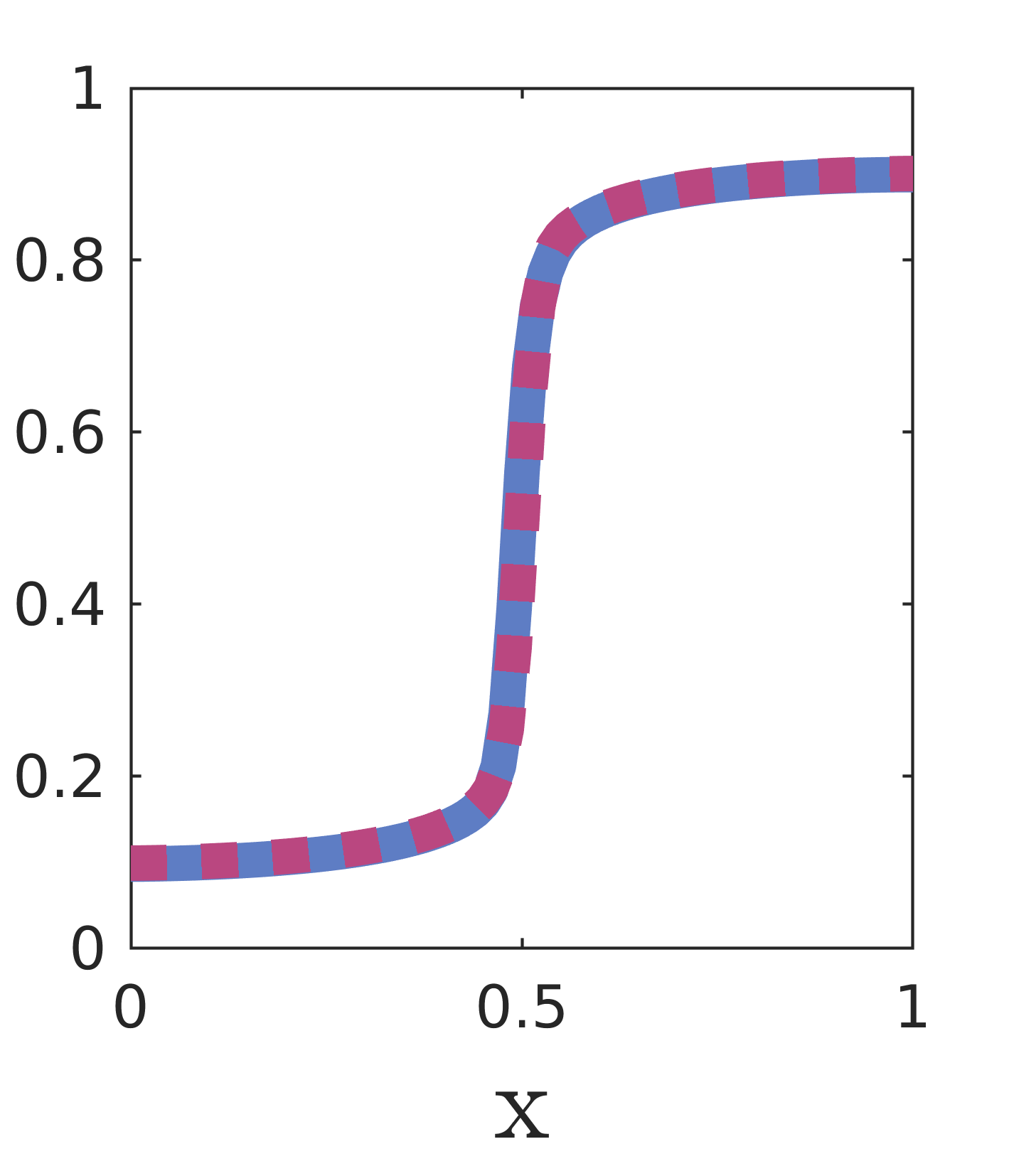}
}\hspace{0.5cm}\subfigure[$\delta=0.1$, $t=1000$]{
\includegraphics[width=2.5cm,height=3cm]{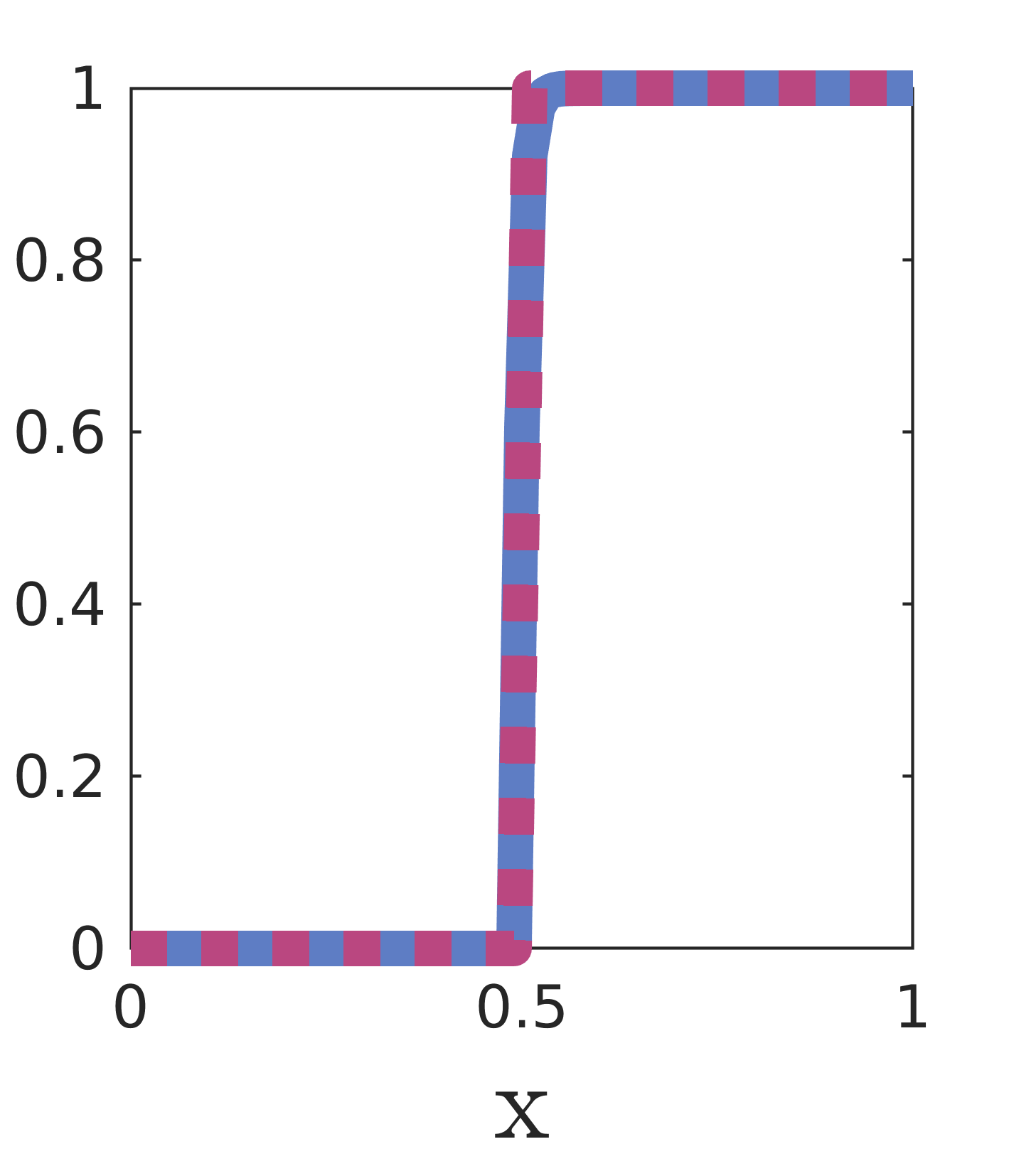}
}
\caption{Time evolution of $u_\epsilon(t,x)$ for different cell-diffusion coefficients in \eqref{eq:KSDC}, compared with the limiting profile $u$ of \eqref{hss2}, in one dimension with $m=5$, $\chi=100$, $D=1$, and $M=0.5$.}
\label{fig:label31311}
\end{figure}

\begin{figure}[ht]
\centering
\subfigure[$\chi=1$, $D=1$]{
\includegraphics[width=2.5cm,height=3cm]{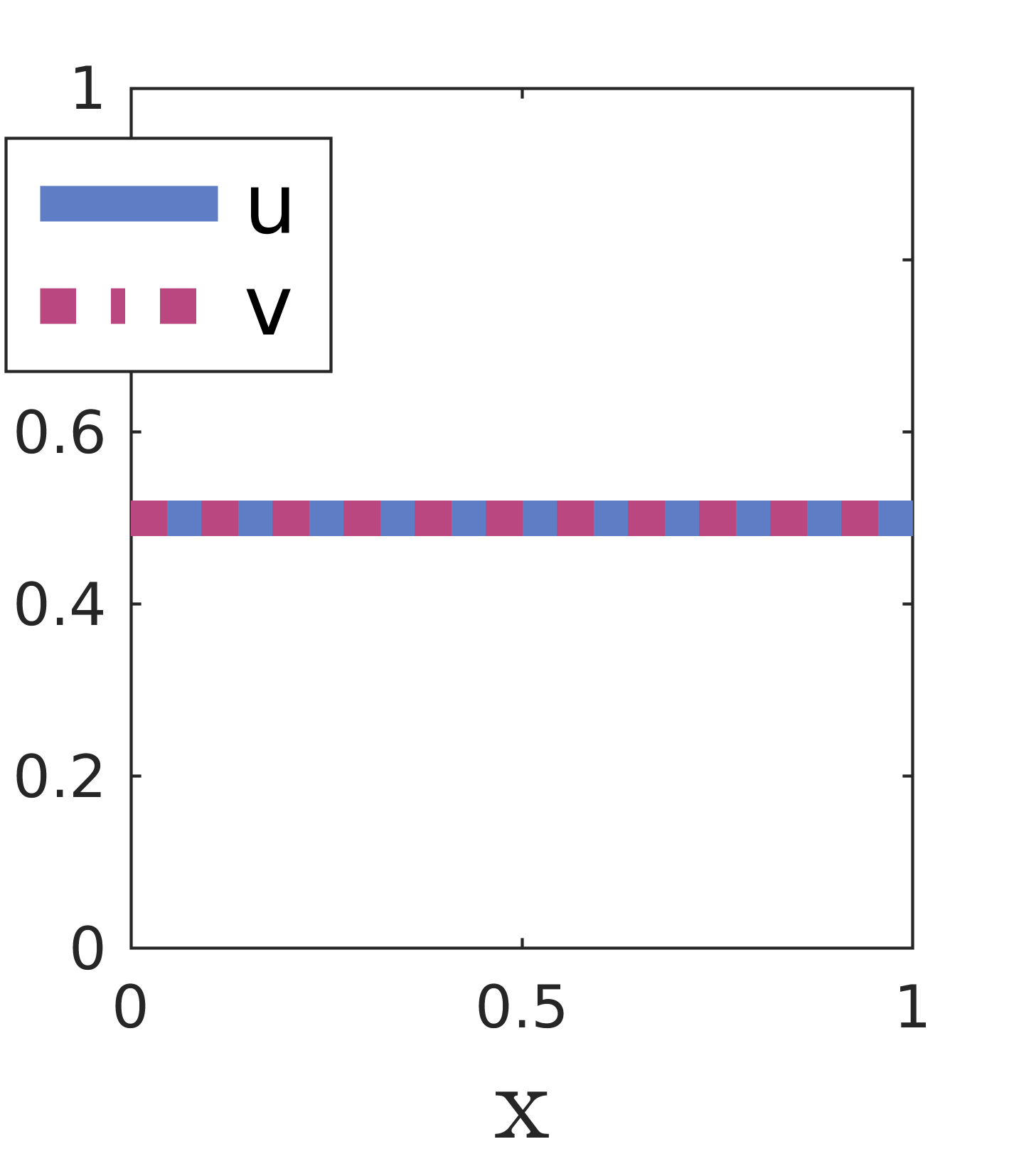}
}\hspace{0.5cm}\subfigure[$\chi=5$, $D=0.04$]{
\includegraphics[width=2.5cm,height=3cm]{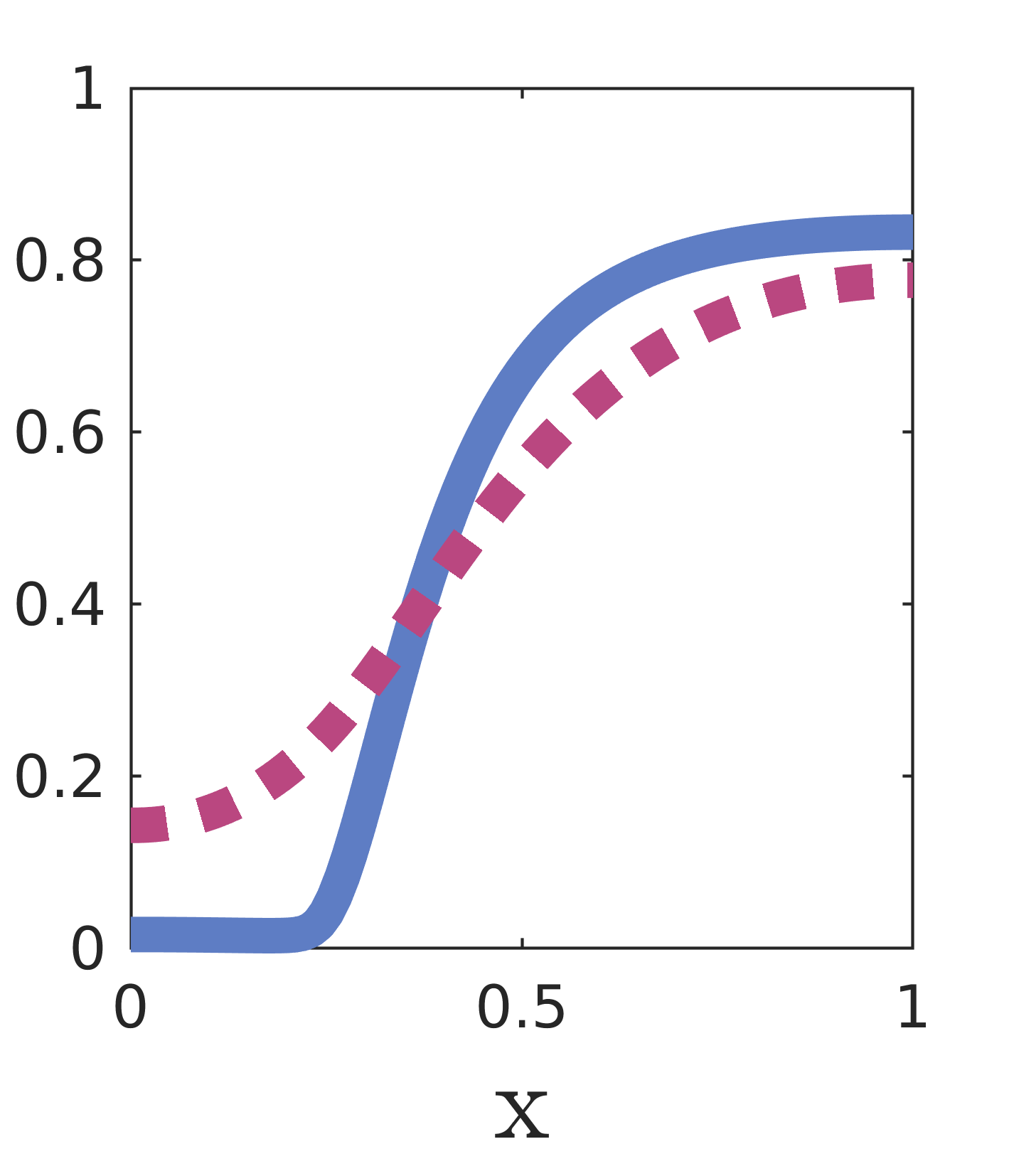}
}\hspace{0.5cm}\subfigure[$\chi=10$, $D=0.01$]{
\includegraphics[width=2.5cm,height=3cm]{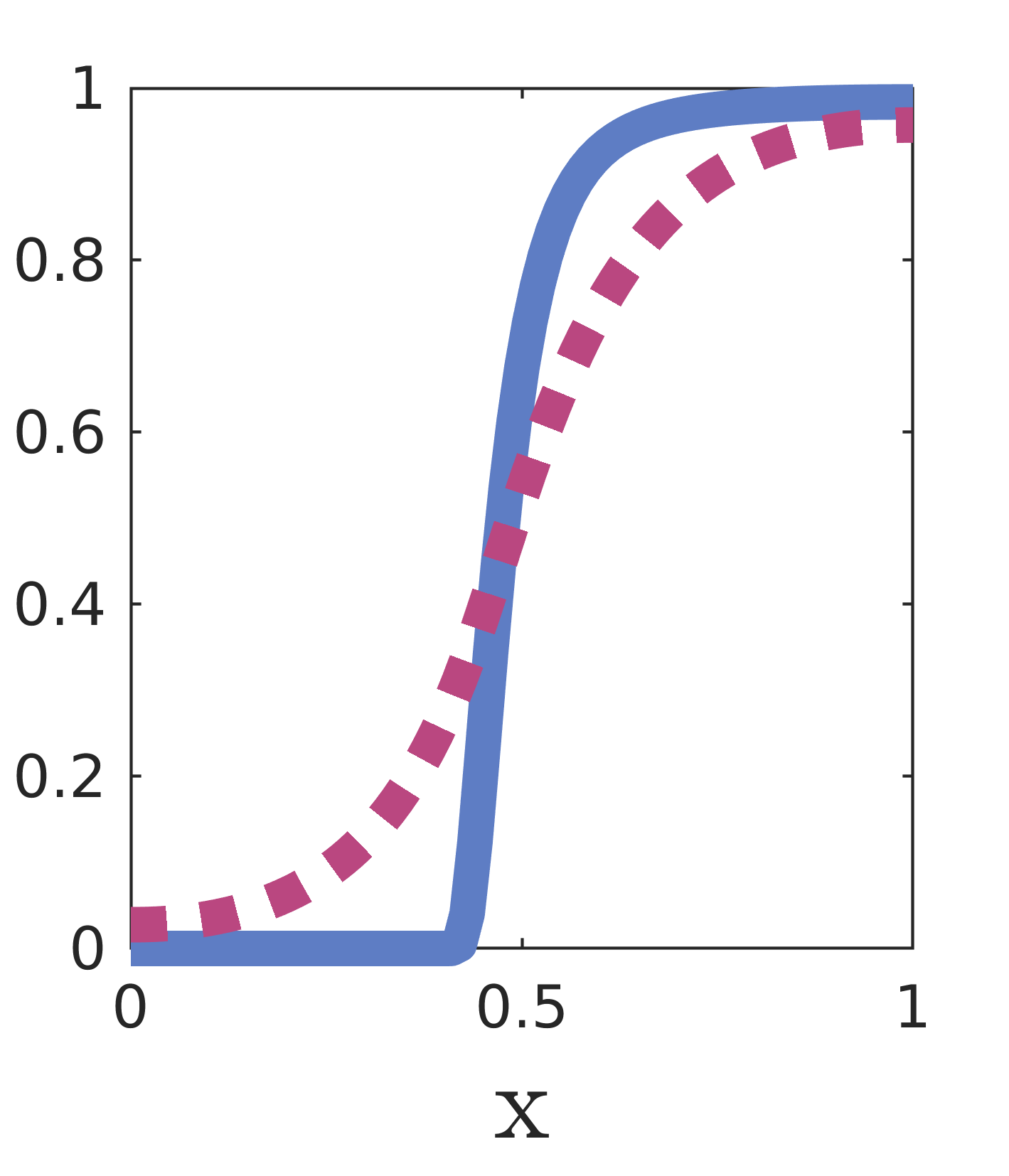}
}
\caption{Steady-state profiles under the scaling $\chi=\epsilon^{-1}$ and $D=\epsilon^2$ in one dimension, with $m=2$, $\delta=1$, and $M=0.5$.}
\label{fig:label}
\end{figure}

To compare the solutions $(u_\epsilon,v_\epsilon)$ of
system~\eqref{eq:KSDC} with the solutions of the corresponding limit
systems, we perform numerical simulations for different values of
$m$, $\delta$, $\chi$, and $D$.

In Figure~\ref{fig:label3131}, we choose $m$ and $\chi$ satisfying
the assumptions of Theorem~\ref{hks}. For $D=1$, the time evolution
of $u_\epsilon$ differs from the solution $u$ of the parabolic
system~\eqref{baru:equ}. When $D=0.01$, the two profiles are closer, which is consistent with the convergence established in
Theorem~\ref{hks}.

In Figure~\ref{fig:label31311}, we present the time evolution of the
density $u_\epsilon$ for different values of $\delta$. For large $\delta$, diffusion dominates and $u_\epsilon$ tends to a constant as time increases. For smaller $\delta$, the profile of $u_\epsilon$ is closer to the hyperbolic--elliptic limit, and the transition becomes sharper. This is consistent with Theorem~\ref{Theorem_7}.

In Figure~\ref{fig:label}, we show steady-state profiles of system
\eqref{eq:KSDC} under the scaling $\delta=1$,
$\chi=\epsilon^{-1}$, and $D=\epsilon^2$. As $\epsilon$ decreases, the density profile becomes sharper and approaches $0$ and $1$. This illustrates the sharp-interface behavior associated with the scaling in Theorem~\ref{Theorem10}.

\section{Conclusion and perspectives}

In this paper, we investigated three asymptotic regimes for system
\eqref{eq:KSDC}. A key finding is that the Keller--Segel system exhibits
three distinct asymptotic behaviors, depending on the choice of parameters.
This reveals interesting connections between different models. However,
several open questions regarding asymptotic limits remain unresolved.

In the first case, we established strong convergence to a scalar nonlinear
diffusion equation of porous-medium type as $D=\epsilon^2\to0$ when
$1\le m<2$ and $\chi$ satisfies the parabolicity condition. The regime
in which this condition fails merits further investigation. This regime may become more complex when the aggregation mechanism dominates,
potentially leading to oscillatory behavior
(see~\cite{Potapov2005,bpmz2024}), which poses challenges for proving
compactness of the density.

In the second case, as $\delta=\epsilon\to0$ with $D=1$, we proved that
the solutions converge, up to a subsequence, to a kinetic entropy solution
of the hyperbolic--elliptic Keller--Segel system \eqref{hss2}.

In the third case, we considered the limit of strong attraction
$\chi=\epsilon^{-1}\to\infty$ and small chemical diffusion
$D=\epsilon^2\to0$. For characteristic initial data of finite perimeter,
we showed that the solutions converge to the time-independent initial
characteristic function. A natural question is what happens if the
chemical diffusion is fixed, for instance $D=1$. One difficulty is that
the singular part of the energy~\eqref{energy1} is no longer uniformly
controlled when $u\ne v$.

\section*{Acknowledgements}

This work is part of the author's PhD thesis. The author would like to express her sincere gratitude to her PhD supervisor, Professor Beno\^\i t Perthame, for his support throughout this work.

\appendix
\section{A compactness lemma}\label{app:compactness}
The following Aubin--Lions-type compactness lemma is used in the proof of Theorem~\ref{Theorem10}; see \cite[Lemma B.1]{kim2024density}.
\begin{lemma}
Let $u_n$ be a sequence of functions bounded in $L^\infty(0,T; L^1(\Omega))$ such that $u_n$ is bounded in $L^\infty((0,T); \mathrm{BV}(\Omega))$ and $u_n \to u$ in $L^\infty((0,T); H^{-1}(\Omega))$. Then
\[
{\|u_n-u\|_{L^\infty(0,T;L^1(\Omega))}\to0.}
\]
\end{lemma}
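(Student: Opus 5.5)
The plan is to interpolate between the two hypotheses using an $\epsilon$-net argument in time, with the $H^{-1}$ convergence supplying smallness and the $BV$ bound supplying compactness in $L^1$. Set $w_n:=u_n-u$. By lower semicontinuity of the total variation under $H^{-1}$ (hence distributional) convergence, $u(t)$ is bounded in $BV(\Omega)$ for a.e. $t$, so $w_n$ is bounded in $L^\infty(0,T;BV(\Omega))$, say by $K$. It then suffices to prove an Ehrling-type interpolation inequality: for every $\eta>0$ there is $C_\eta$ such that
\[
\|w\|_{L^1(\Omega)}\le \eta\,\|w\|_{BV(\Omega)}+C_\eta\,\|w\|_{H^{-1}(\Omega)}\qquad\text{for all } w\in BV(\Omega)\cap H^{-1}(\Omega).
\]
Granting this, for a.e. $t$ we get $\|w_n(t)\|_{L^1}\le \eta K+C_\eta\|w_n\|_{L^\infty(0,T;H^{-1})}$. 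Taking the essential supremum in $t$ and then letting $n\to\infty$ and $\eta\to0$ gives the claim.

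I will prove the interpolation inequality by the standard contradiction argument. Suppose it fails for some $\eta$. Then there are $w_k$ with $\|w_k\|_{L^1}=1$, $\|w_k\|_{BV}\le 1/\eta$, and $\|w_k\|_{H^{-1}}\to0$. By the compact embedding $BV(\Omega)\Subset L^1(\Omega)$ (valid since $\Omega$ is a bounded smooth domain), a subsequence converges in $L^1$ to some $w$ with $\|w\|_{L^1}=1$. The $H^{-1}$ convergence to $0$ implies $w_k\to0$ in $\mathcal D'(\Omega)$, so $w=0$, which is a contradiction.

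The step I expect to be the main obstacle is making sense of the $H^{-1}$ norm for $L^1$ functions. In dimension $n\ge3$ a $BV$ function lies only in $L^{n/(n-1)}$, which does embed in $H^{-1}=(H^1)'$ by Sobolev duality since $H^1\subset L^{2n/(n-2)}$. For $n\le2$ the embedding is also fine. In our application $0\le u_n\le1$ anyway, so the functions are in $L^2\subset H^{-1}$. The other point to check is that distributional limits identify the $L^1$ limit, which only requires $C_c^\infty(\Omega)\subset H^1(\Omega)$.

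An alternative is to apply the same argument directly to the sequence, using a uniform-in-$t$ diagonal: pick $t_n$ nearly attaining $\operatorname{ess\,sup}_t\|w_n(t)\|_{L^1}$ and apply compactness to $w_n(t_n)$. I would keep this as a fallback, but the interpolation route avoids measurability issues with selecting the times $t_n$.
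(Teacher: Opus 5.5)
Your proof is correct. The paper gives no proof of this lemma; it only cites \cite[Lemma B.1]{kim2024density}, so your argument is a self-contained replacement for that citation rather than an alternative to an argument in the text.

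The key tool is your Ehrling-type inequality $\|w\|_{L^1(\Omega)}\le\eta\|w\|_{BV(\Omega)}+C_\eta\|w\|_{H^{-1}(\Omega)}$ on $BV(\Omega)\cap H^{-1}(\Omega)$. You prove it by normalization, the compact embedding $BV(\Omega)\Subset L^1(\Omega)$, and identification of the limit in $\mathcal{D}'(\Omega)$. This is the right tool for two reasons:
\begin{itemize}
\item You apply it for $t$ outside a single null set (a countable union over $n$) and then take the essential supremum. This gives uniformity in time without selecting times $t_n$.
\item It works directly on the bounded domain with $H^{-1}=(H^1(\Omega))'$. You never mollify or extend across $\partial\Omega$.
\end{itemize}
What it does not give is a rate. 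A mollification estimate, $\|w\|_{L^1}\lesssim\delta\,|Dw|(\Omega)+\delta^{-1}\|w\|_{H^{-1}}$, would give an explicit modulus such as $\|u_n-u\|_{L^\infty(0,T;L^1)}\lesssim\|u_n-u\|_{L^\infty(0,T;H^{-1})}^{1/2}$, at the cost of boundary technicalities. Nothing in the paper needs this.

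For the uniform $BV$ bound on $u(t)$, the cleanest justification is compactness again. For such $t$, a subsequence of $u_n(t)$ converges in $L^1$, and its limit must be $u(t)$. Lower semicontinuity then gives $\|u(t)\|_{BV}\le K$. Alternatively, apply your inequality to $u_n-u_k$ and use completeness of $L^\infty(0,T;L^1(\Omega))$; this avoids the step entirely.

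One side remark in your proposal is wrong, though harmless. The embedding $L^{n/(n-1)}(\Omega)\subset (H^1(\Omega))'$ requires $n/(n-1)\ge 2n/(n+2)$, i.e.\ $n\le 4$. For $n\ge5$ a $BV$ function need not lie in $H^{-1}$. For example, on a ball around the origin in $\mathbb{R}^5$, take $|x|^{-a}$ with $7/2<a<4$ and test it against $|x|^{-b}\in H^1$ with $5-a\le b<3/2$; the pairing diverges. This does not affect your argument, for three reasons:
\begin{itemize}
\item you state the inequality only on $BV\cap H^{-1}$;
\item the hypothesis of the lemma already places $u_n-u$ in $L^\infty(0,T;H^{-1}(\Omega))$;
\item in the paper's application $0\le u_\epsilon\le1$.
\end{itemize}
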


%

%
%

\normalem
\bibliographystyle{plain}
\bibliography{MQBH_bib}

\end{document}